\documentclass[11pt,reqno]{amsart} 

\usepackage[utf8]{inputenc} 
\usepackage{bbm}

\usepackage{xcolor}
\usepackage[normalem]{ulem} 
\usepackage{soul} 


\usepackage[margin=1in]{geometry} 


\usepackage{graphicx} 
\usepackage{float} 

 \usepackage[parfill]{parskip} 
 
\usepackage{booktabs} 
\usepackage{array} 
\usepackage{paralist} 
\usepackage{verbatim} 
\usepackage{subfig} 
\usepackage{mathrsfs}
\usepackage{amssymb}
\usepackage{amsthm}
\usepackage{amsmath,amsfonts,amssymb,esint}
\usepackage{graphics,color}
\usepackage{enumerate, enumitem}
\usepackage{mathtools,centernot}
\usepackage{cases}
\usepackage{amsrefs}
\usepackage{bbm}
\usepackage{xfrac}



\pagestyle{plain} 


\usepackage{bookmark}

\newtheorem{theorem}{Theorem}[section]
\newtheorem{lemma}[theorem]{Lemma}
\newtheorem{corollary}[theorem]{Corollary}
\newtheorem{definition}[theorem]{Definition}
\newtheorem{proposition}[theorem]{Proposition}
\newtheorem{remark}[theorem]{Remark}

\numberwithin{equation}{section}

\newcommand{\norm}[1]{\left\|#1\right\|}

\newcommand{\T}{\ensuremath{\mathbb{T}}}
\newcommand*{\R}{\ensuremath{\mathbb{R}}}

\newcommand*{\N}{\ensuremath{\mathbb{N}}}

\newcommand{\eps}{\varepsilon}
\newcommand*{\tr}{\ensuremath{\mathrm{tr\,}}}

\usepackage{color, graphicx}
\usepackage{mathrsfs, dsfont}

\usepackage[]{hyperref}
\hypersetup{
    colorlinks=true,       
    linkcolor=red,          
    citecolor=blue,        
    filecolor=red,      
    urlcolor=cyan           
}

\def\dist{\mathop{\rm dist}\nolimits}    
\def\div{\mathop{\rm div}\nolimits}    
\def\dim{\mathop{\rm dim}\nolimits}
\def\spt{\mathop{\rm Spt}\nolimits} 
\def\tr{\mathop{\rm Tr}\nolimits} 
\def\Lip{\mathop{\rm Lip}\nolimits}  


\title{Intermittency and lower dimensional dissipation in incompressible fluids}

\author{Luigi De Rosa}
\address{Department Mathematik Und Informatik, Universit\"at Basel, CH-4051 Basel, Switzerland}
\email{luigi.derosa@unibas.ch}

\author{Philip Isett}
\address{Department of Mathematics, California Institute of Technology, Pasadena CA-91125, USA}
\email{isett@caltech.edu}

\date{\today}

\begin{document}

\begin{abstract}
In the context of incompressible fluids, the observation that  turbulent singular structures  fail to be space filling is known as ``intermittency'' and it has strong experimental foundations.  Consequently, as first pointed out by Landau, real turbulent flows do not satisfy the central assumptions of homogeneity and self--similarity in the K41 theory, and the K41 prediction of structure function exponents $\zeta_p=\sfrac{p}{3}$ might be inaccurate.  In this work we prove that, in the inviscid case, energy dissipation that is lower--dimensional in an appropriate sense implies deviations from the K41 prediction in every $p-$th order structure function for $p>3$.  By exploiting a Lagrangian--type Minkowski dimension that is very reminiscent of the Taylor's \emph{frozen turbulence} hypothesis, our strongest upper bound on $\zeta_p$ coincides with the $\beta-$model proposed by Frisch, Sulem and Nelkin in the late 70s, adding some rigorous analytical foundations to the model.  More generally we explore the relationship between dimensionality assumptions on the dissipation support and restrictions on the $p-$th order absolute structure functions. This approach differs from the current mathematical works on intermittency by its focus on geometrical rather than purely analytical assumptions.

The proof is based on a new local variant of the celebrated Constantin-E-Titi argument that features the use of a third order commutator estimate, the special double regularity of the pressure, and mollification along the flow of a vector field. 
\end{abstract}

\maketitle

\par
\noindent
\textbf{Keywords:} Incompressible Euler, Weak Solutions,  K41 Theory of Turbulence, Intermittency.
\par
\medskip\noindent
{\textbf{MSC (2020):} 		35Q31 - 35D30 - 	76F02 - 28A80.
\par
}

\section{Introduction}
In any spatial dimension $d\geq 2$ we will consider the incompressible Euler equations 
\begin{equation}\label{E}
\left\{\begin{array}{l}
\partial_t v+\div (v\otimes v)  +\nabla p =0\\
\div v = 0,
\end{array}\right.
\end{equation}
on $\Omega\times (0,T)$, where the spatial set $\Omega$ is either the $d-$dimensional torus $\T^d$ or the whole space $\R^d$.

A classical theorem of Constantin-E-Titi \cite{CET94} following \cite{E94}, which confirmed an already quite rigorous\footnote{See \cite{ES06} for an extensive description of Onsager's contributions to the modern theory of fully developed turbulence.} prediction of Onsager \cite{O49}, is that an Euler flow that fails to conserve the kinetic energy 
$$
e_v(t):=\frac{1}{2}\int_{\Omega} |v(x,t)|^2\,dx
$$
cannot have ``more than $\sfrac{1}{3}$ of a derivative in $L^3$'', or more precisely it cannot belong to a Besov class $L_t^3( B_{3,\infty}^{\theta})$ for any $\theta > \sfrac{1}{3}$ (see Section \ref{Sec:tools} for a definition of Besov norms and \cite{DebGS17} for a survey of related results).  The resolution of the Onsager conjecture over the past few decades has confirmed that the exponent $\sfrac{1}{3}$ is sharp \cites{isettOnsag,BdLSV17,Isett17}.  The aim of this paper is to prove theorems of Constantin-E-Titi type that connect to the phenomenon of intermittency and lower dimensional dissipation in turbulent flows.  
We start by explaining the relevant background and giving a rough statement of our strongest result (Theorem \ref{t:rough} below), while we postpone to Section \ref{s:precise_results} all the rigorous statements.

\subsection{Theoretical background and main physical result.}
Physically, the Constantin-E-Titi theorem has the interpretation of a ``singularity theorem''.  It implies that ($L^2$ compact) sequences of Navier-Stokes solutions with viscosity tending to zero must exhibit unbounded growth of the $L_t^3 (B_{3,\infty}^{\theta})$ norm for every $\theta > \sfrac{1}{3}$ if the rate of kinetic energy dissipation remains bounded from below \cite{DrEy19}.  Experimentally, there is considerable evidence for the persistence of energy dissipation at low viscosity (which is known as the ``$0-$th law of turbulence'', see \cite{V15} for a recent review), and $L^2$ compactness is a consequence of observed scaling of the energy spectrum \cite{CG12}.  Persistence of energy dissipation in the vanishing viscosity limit has also been recently shown to exist mathematically for the forced Navier-Stokes equations \cite{BD22} and the passive--scalar advection (scalar theory of turbulence) \cite{CCS22}.
Consequently, singular Euler flows that arise in the inviscid limit and dissipate energy form natural objects to model the behavior that can occur in the vanishing viscosity limit of fully developed turbulence.  The critical exponent $\sfrac{1}{3}$ that plays a pivotal role in the Constantin-E-Titi singularity theorem is also the regularity predicted by the K41 theory of turbulence \cite{K41} for a (dissipative) turbulent flow.

More precisely, K41 predicts the size of ``absolute structure functions'', which measure the average velocity fluctuations at scale $|\ell|$ in the inertial range of length scales (where viscosity presumably plays no role).  The K41 prediction is a scaling law 
$$\langle |v(x+\ell) - v(x)|^p \rangle^{\sfrac{1}{  p}} \sim |\ell|^{\sfrac{\zeta_p}{p}},$$
with $\sfrac{\zeta_p}{  p} = \sfrac{1}{3}$ for all $p \geq 1$,  where  $\langle \cdot \rangle$ is some relevant averaging procedure, space, time or ensemble.  When we interpret the averaging as a spatial average, this scaling law translates to exact\footnote{Consequently, we interpret $\sfrac{\zeta_p}{p}$ mathematically to be the supremum of $\theta$ such that $v \in B_{p,\infty}^\theta$.} $B_{p,\infty}^{\sfrac{\zeta_p}{  p}} = B_{p,\infty}^{\sfrac{1}{3}}$ regularity of the limiting Euler flows, for which all small length scales lie in the inertial range.  Remarkably, however, this scaling law appears to be inconsistent with experiments (at least for $d=3$), indicating that the homogeneity and self--similarity assumption of the K41 theory appears to be false for real turbulent flows.  Shortly after the appearance of the K41 theory, the possibility that the scaling of structure functions could deviate from K41 was first pointed out by Landau \cite{F95}*{Section 6.4}, leading Kolmogorov to revisit his theory \cite{K62} in 1962
(see \cite{frisch1991global} for a historical account).  Indeed, what appears to hold is that for $p$ noticeably larger than $3$, the averages are larger than K41 predicts, so that the exponent $\sfrac{\zeta_p}{  p} < \sfrac{1}{3}$, while for $p$ noticeably smaller than $3$ the averages are smaller than K41 predicts ($\sfrac{\zeta_p}{  p}> \sfrac{1}{3}$). We refer to \cite{ISY20} for a very recent numerical evidence of this fact. The exponent $p = 3$ has a special role in turbulence theory relating to the Kolmogorov $\sfrac{4}{5}$ law -- an exact result relating the energy dissipation of a solution to its third order (signed) structure function. 
Further readings on some aspects of intermittency (or lack thereof) for $p=3$ can be found in \cites{Dr22,E07}.

What the strongest main theorem of this article shows (Theorem \ref{t:main_lagr_dim}) is the following (informal) statement for Euler flows whose energy dissipation fails to be spatially homogeneous.  Here we emphasize that the relevant regularity threshold is strictly below $\sfrac{1}{3}$ and that the relevant range of integrability exponents is the range $p > 3$.

\begin{theorem}[Rough inviscid statement]\label{t:rough}
If the dissipation of energy of an Euler flow on $\R^d$ or $\T^d$ is nontrivial and supported on a lower dimensional set  (in the appropriate sense), then there is a linear upper bound on absolute structure function exponents for all $p > 3$ that lies below the K41 prediction $\sfrac{\zeta_p}{ p} = \sfrac{1}{3}$.  More precisely, let $\theta_p = \sfrac{1}{3} - (d-\gamma)\sfrac{(p-3)}{3p}$, where $\gamma\in [0,d)$ is an upper bound on the dimension of the dissipation set in the sense of Theorem~\ref{t:main_lagr_dim}.  Then, if $\theta\in (0,1)$,  the velocity field cannot be of class $L_t^p (B_{p,\infty}^{\theta})$ for any $\theta>\theta_p$. 
\end{theorem}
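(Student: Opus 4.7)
The plan is to prove the contrapositive: if $v \in L^p_t B_{p,\infty}^{\theta}$ with $\theta > \theta_p$, then anomalous energy dissipation supported on a set of Lagrangian Minkowski dimension at most $\gamma$ must vanish. The strategy is a \emph{local} variant of the Constantin-E-Titi argument combined with a H\"older covering argument that exploits the dimensionality hypothesis. First, I mollify $v$ at scale $\ell$ along the flow of a suitably coarse-grained velocity field, producing $v_\ell$. Applying the mollifier to \eqref{E} and testing against $v_\ell$ yields a local energy identity of the form
\begin{equation*}
\partial_t \tfrac{|v_\ell|^2}{2} + \div(\,\cdot\,) = -\Pi_\ell(v,p),
\end{equation*}
where $\Pi_\ell$ collects commutators of the mollification with the convective nonlinearity and with the pressure term. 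Mollification along the coarse flow (rather than static spatial mollification) is chosen to align the localization scale with the Lagrangian cylinders that appear in the geometric dimension hypothesis.

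Next, I bound $\Pi_\ell$ in $L^{p/3}_{t,x}$ at the sharp scaling $\ell^{3\theta-1}$. The convective commutator is controlled by the classical Constantin-E-Titi estimate, giving $\|\Pi_\ell^{\mathrm{conv}}\|_{L^{p/3}} \lesssim \ell^{3\theta-1} \|v\|_{L^p_t B_{p,\infty}^\theta}^3$. The pressure contribution benefits from the double regularity $p \in L^{p/2}_t B_{p/2,\infty}^{2\theta}$, a consequence of Calder\'on-Zygmund applied to $-\Delta p = \div\div(v \otimes v)$, which gains the extra derivative that would otherwise cost a factor $\ell^{-1}$ in the pressure commutator. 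The localization along the flow produces an additional third-order commutator---morally the failure of the mollifier to commute with two derivatives and the advective transport simultaneously---which must be estimated by Besov interpolation to match the same scaling.

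Since anomalous dissipation is assumed nontrivial, $\Pi_\ell$ converges in distribution (in the sense of Duchon-Robert) to a nonzero space-time measure $D$ whose support $\Sigma$ has Lagrangian Minkowski dimension at most $\gamma$. Covering $\Sigma$ at scale $\ell$ by $\lesssim \ell^{-\gamma}$ Lagrangian cylinders of radius $\ell$, its $\ell$-neighborhood $N_\ell$ has spatial measure $\lesssim \ell^{d-\gamma}$ at each time. Testing $\Pi_\ell$ against a smooth cutoff dominating $\mathbf{1}_{N_\ell}$ and using H\"older with conjugate exponents $p/3$ and $p/(p-3)$,
\begin{equation*}
\left|\int \Pi_\ell\, \mathbf{1}_{N_\ell}\, dx\, dt\right| \lesssim \|\Pi_\ell\|_{L^{p/3}_{t,x}}\, |N_\ell|^{(p-3)/p} \lesssim \ell^{3\theta - 1 + (d-\gamma)(p-3)/p} = \ell^{3(\theta - \theta_p)}.
\end{equation*}
Since $\theta > \theta_p$, this vanishes as $\ell \to 0$. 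But by weak convergence the left-hand side tends to the total dissipation $D(\Sigma) \neq 0$, a contradiction.

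The main obstacle I anticipate is establishing the third-order commutator estimate required to close the local energy identity at the sharp scaling $\ell^{3\theta-1}$: both the flow-aligned mollification and the pressure's double regularity must enter, and any loss in the commutator scaling would weaken the final exponent $\theta_p$. A secondary difficulty is making the Lagrangian Minkowski dimension precise enough that the covering of $\Sigma$ by cylinders of the coarse flow is compatible with the mollifier from Step 1; this is where Taylor's frozen-turbulence intuition is needed to align the geometric side of the argument with the analytic side.
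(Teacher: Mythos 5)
Your scaling computation for the ``on--support'' part is exactly the paper's: testing the approximate dissipation against a cutoff adapted to the Lagrangian neighbourhood, H\"older with exponents $\sfrac{p}{3}$ and $\sfrac{p}{(p-3)}$, and the measure bound $\lesssim \ell^{d-\gamma}$ give precisely $\ell^{3\theta-1+(d-\gamma)\sfrac{(p-3)}{p}}=\ell^{3(\theta-\theta_p)}$, which matches \eqref{lagrangian_final_bound}. But there is a genuine gap in the final step: you claim that ``by weak convergence the left-hand side tends to the total dissipation $D(\Sigma)\neq 0$.'' Weak convergence of $\Pi_\ell$ to $D$ is against a \emph{fixed} test function, whereas your test function $\mathbf{1}_{N_\ell}$ shrinks with $\ell$; nothing so far prevents $\Pi_\ell$ from carrying non-negligible (possibly cancelling) mass \emph{outside} $N_\ell$, since $\Pi_\ell$ is defined everywhere and only its distributional limit is supported on $\Sigma$. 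Showing that $\langle \Pi_\ell,\varphi(1-\chi_\ell)\rangle\to 0$ is exactly where the paper's main technical work lives, and your proposal does not address it.

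The paper closes this gap as follows. One observes that $D^v*\rho_\eps$ (the spatial mollification of the genuine Duchon--Robert distribution) is \emph{exactly} supported in an $\eps$-neighbourhood of $S$, so $\langle D^v*\rho_\eps,\varphi(1-\chi_{\delta,\tau})\rangle=0$ once $\eps\le\delta$. Hence the off-support term equals $\langle D^v_\eps-D^v*\rho_\eps,\varphi\rangle-\langle D^v_\eps-D^v*\rho_\eps,\varphi\chi_{\delta,\tau}\rangle$, and the difference $D^v_\eps-D^v*\rho_\eps$ is computed explicitly by subtracting the mollified exact energy balance from the approximate one, via the higher-order averaging identity of Lemma \ref{l:higher_average}. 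This is where the cubic commutator $K^v_\eps$, the pressure commutator with the double regularity \eqref{p_double_reg}, and the transport term $D_{t,v_\eps}\tr R_\eps$ actually enter --- not, as your Step 2 suggests, in the bound $\|D^v_\eps\|_{L^{\sfrac{p}{3}}}\lesssim\eps^{3\theta-1}$, which is just the standard quadratic Constantin--E--Titi estimate. The transport error is the delicate one: integrating by parts puts an advective derivative on the cutoff, and it is the \emph{cutoff} (not the velocity, as in your Step 1) that is mollified along the flow of $V^\delta$, so that $D_{t,V^\delta}\chi_{\delta,\tau}$ costs only $\tau^{-1}$ with $\tau=\delta^{1-\theta+\sfrac{(d-\gamma)}{p}}$ rather than $\delta^{-1}$; the residual $(v_\eps-V^\delta)\cdot\nabla\tr R_\eps$ is controlled by the stability parameter $\beta_1=\theta$. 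Mollifying $v$ itself along a rough flow, as you propose, is not needed and would additionally force you to re-derive the Duchon--Robert formula \eqref{DR_measure} for a nonstandard regularisation. Finally, note that the double regularity of the pressure is a genuinely stronger statement than plain Calder\'on--Zygmund (which would only return the regularity $\theta$, not $2\theta$, at integrability $\sfrac{p}{2}$), and that the per-time-slice measure bound you invoke is stronger than the space--time bound \eqref{L_dim_gamma} actually assumed (compare Remark \ref{r:lagrangian_L3_in_time}).
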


In terms of the structure functions of an Euler flow, this theorem is a quantitative upper bound of the form $\sfrac{\zeta_p}{p} < \sfrac{1}{3}$ for all exponents $p > 3$ under the assumption of nontrivial, lower--dimensional energy dissipation.  In particular, these absolute structure functions become larger than K41 predicts.  Note that $\theta_p> 0$ for all $p\geq 3$ if and only if $\gamma \in [d-1,d]$.

Beyond the statement of this theorem, our proof provides a machinery for turning certain assumptions about lower--dimensionality of the dissipation measure into conclusions about the Besov regularity of the solutions. The strongest versions of this argument apply to the full range $p > 3$ as the above theorem states, but weaker assumptions lead to weaker conclusions.  Similarly to the Constantin-E-Titi theorem, our result can equivalently be stated as criteria for the conservation of energy (see indeed the rigorous statement given in Theorem \ref{t:main_lagr_dim} below): If the dissipation is lower--dimensional in the appropriate sense and the regularity of the solution is $L_t^p (B_{p,\infty}^{\theta_p+\epsilon})$, then the energy is conserved. However, our proof contains some new ingredients compared to the Constantin-E-Titi argument and its known generalizations that are related to the analysis of local energy dissipation (see \eqref{Local_energy} below): One has to deal with trilinear commutators, the improved double regularity of the pressure from \cites{CD18,CDF20,Is2013} plays a crucial role, and the construction of a specific cut--off is needed to test the local energy balance \eqref{Local_energy} since it can only be interpreted in a distributional sense in our Besov class of solutions. The definition of the cut--off is indeed the key point in order to handle different notions of dimension.

The statement of the above theorem (as well as our other main theorem, Theorem~\ref{t:main}) is strongly motivated by the phenomenon of ``intermittency'' in turbulence, which is the property that turbulence empirically fails to be space--filling and can even be concentrated on lower dimensional sets (see \cite{BJPV98,MS87,FP85} and also \cite{S81} for some numerical evidence). This ``patchiness'' of turbulence and how it relates to anomalies in the exponents $\zeta_p$ (see for instance the discussion in \cite{frisch1991global})  still represents an important and challenging problem in mathematical physics. What our main theorem establishes is a quantitative downward deviation from the K41 theory for absolute structure functions of order greater than $3$ that relates the dimension of the dissipation to the structure function exponent $\zeta_p$.  For other interesting mathematical works addressing intermittency we refer to \cites{ET99,CS14, LS15,LS17, Shv18, BMNV21, NV22, CS22}. The latter works focus on a \emph{purely analytical} approach to intermittency, while in this note we are exploring a more \emph{geometrical} one, which we hope gives some different/new insights on the topic.

Our quantitative relationship between the upper bound on the dimension and the upper bound on the structure functions exponents $\zeta_p$ turns out to be exactly the one proposed by Frisch, Nelkin and Sulem in \cite{FSN78} in their ``$\beta-$model''.  The $\beta-$model has been among the first simple attempts to correct the K41 theory by introducing nonhomogeneity in modeling turbulence.  Indeed, in the physically relevant three dimensional case, they assume that at each stage of the the Richardson cascade process, the total volume occupied by the eddies decreases by a fraction $\beta<1$, and they derive a prediction consistent with Theorem~\ref{t:rough}:
\begin{equation}\label{bound_betamodel}
\zeta_p=\frac{p}{3}-(3-D_\beta)\frac{p-3}{3},
\end{equation}
where $D_\beta$ is what they call the ``intermittency dimension'' or ``self--similarity dimension,'' motivated by the nomenclature in the work of Mandelbrot \cite{Man75}.  The parameter $D_\beta$ describes the fraction of the space in which an appreciable part of the energetic excitation accumulates along the cascade process. As stated in \cite{AGHA84} the $\beta-$model (with $D_\beta\simeq 2.8$) \eqref{bound_betamodel} fits experimental data rather well for not too large values of $p$.  We refer to \cite[Chapter 8]{F95} for a more detailed discussion about the different fractal and multifractal intermittency models that have been proposed to correct K41.

While our proof provides a general machinery to analyze the link between lower dimensional dissipation and Besov regularity of the solution, we limit ourselves to two main theorems (Theorem \ref{t:main} and Theorem \ref{t:main_lagr_dim}), together with a corollary (see Corollary \ref{t:main_lagr}), that concern spatial intermittency.  These theorems use different notions of dimension and therefore lead to different conclusions.  For future work it is also  of interest to consider slightly different hypotheses and different notions of dimension (for instance a Frostman or Hausdorff one). 
The ``Lagrangian--type Minkowski dimension'' that we use in Theorem \ref{t:main_lagr_dim} (see Definition \ref{Lagr_stab_Min_dim}) is strongly motivated by Taylor's \emph{frozen turbulence} hypothesis \cite{T38} which asserts that ``all turbulent eddies are advected by the mean streamwise velocity, without changes in their properties''. See \cite{Moi09,ZH81} for more recent empirical tests of the Taylor hypothesis.

The two main theorems (Theorem \ref{t:main} and Theorem \ref{t:main_lagr_dim}) emphasize the role of intermittency in {\it space}, as the dissipative set is allowed to (but does not have to) exist for all times and concentrate on lower dimensional sets in space only. In addition to these, we prove a result (Theorem \ref{t:en_cons_in_time}, similar to \cite[Lemma 2.1]{DH21}) that addresses the implications of intermittency in time.  Physically, this result has the following consequence: If one believes the K41 prediction for the {\it third} order structure function $\zeta_3 = 1$ to be exact in the inviscid limit, and also in the $0-$th law that energy dissipates independent of viscosity, then the time support of the limiting dissipation must occupy the whole time interval of existence, leaving no gaps in time where dissipation does not occur.  This result gives further motivation to study the consequences of dissipation that is lower dimensional in space but persists for all time, which is allowed by the two main theorems, Theorems \ref{t:main} and \ref{t:main_lagr_dim}.  More generally, the time intermittency result quantitatively links the gap between $\zeta_3$ and $1$ with the second order structure function $\zeta_2$ and the dimension of the set of dissipation times.

We remark that adding an external force on the right hand side of the first equation in \eqref{E} of the kind $f\in L^q_{x,t}$ does not affect our spatial intermittency analysis,  and it can be easily incorporated in our estimates (see Remark \ref{r:ext_force} for details). However, since this would have unnecessarily burdened the statements, we prefer not mention it in our main theorems.

Before going into the precise definitions and statements of the results proved in the current paper, let us see the implication of Theorem \ref{t:rough}  for incompressible flows in the infinite Reynolds number limit and how it fits in the existing mathematical literature.

\subsection{Interpretation in the inifinite Reynolds number limit}

Let us point out that, in the same way that the Constantin-E-Titi theorem  extends to a singularity theorem for a vanishing viscosity limit of solutions to Navier-Stokes (see for instance \cite{DrEy19}), our result also implies a statement about the inviscid limit.  

\begin{theorem}[Rough vanishing viscosity statement]\label{t:rough_visc}
    Let $\{v^\nu\}_{\nu>0}$ be a smooth sequence of solutions to the incompressible Navier-Stokes equations on $\T^d$ or $\R^d$ with $\nu\rightarrow 0$. Assume that $v^\nu \rightarrow v$ in $L^2_{x,t}$ and that the dissipation $\nu|\nabla v^\nu|^2 $ accumulates on a nontrivial lower dimensional set with dimension upper bound $\gamma\in [0,d)$ in the sense of Definition \ref{Lagr_stab_Min_dim}. Then for $p>3$, by setting $\theta_p = \sfrac{1}{3} - (d-\gamma)\sfrac{(p-3)}{3p}$,  and $\theta\in (0,1)$, it must hold that
    $$
\lim_{\nu\rightarrow 0} \left\| v^\nu\right\|_{L^p_t(B^{\theta}_{p,\infty})}=+\infty, \quad \forall \theta>\theta_p.
    $$
\end{theorem}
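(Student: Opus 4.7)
The plan is to argue by contradiction, reducing Theorem \ref{t:rough_visc} to the inviscid Theorem \ref{t:main_lagr_dim} by a standard vanishing viscosity passage to the limit, along the same lines as the viscous version of Constantin-E-Titi in \cite{DrEy19}. Assume for contradiction that there exists $\eps>0$ and a subsequence (not relabelled) such that
\[
\sup_{\nu>0} \|v^\nu\|_{L^p_t(B^{\theta_p+\eps}_{p,\infty})} < +\infty .
\]
Combined with the hypothesis $v^\nu \to v$ in $L^2_{x,t}$, the lower semicontinuity of the Besov norm under weak-$\ast$ convergence (one can test against smooth compactly supported functions after a standard mollification) yields $v\in L^p_t(B^{\theta_p+\eps}_{p,\infty})$.

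Next I would show that $v$ is a weak solution of incompressible Euler. This is routine: for each $\nu>0$, $v^\nu$ is a classical solution of Navier-Stokes, so the weak formulation holds with the viscous term $\nu \int \nabla v^\nu : \nabla \varphi$, which tends to $0$ as $\nu\to 0$ since $\nu$ itself vanishes and $v^\nu$ is bounded in $L^2$. The pressure $p^\nu$ can be recovered from the divergence constraint via $-\Delta p^\nu = \div\div (v^\nu \otimes v^\nu)$ and converges (in distribution) to the Euler pressure $p$ associated to $v$. The limit $(v,p)$ thus solves \eqref{E} in the distributional sense.

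The critical step is to transfer the Lagrangian-Minkowski dimension hypothesis from the viscous dissipation measures $\mu^\nu := \nu |\nabla v^\nu|^2\,dx\,dt$ to a limiting measure $\mu$ that plays the role of the inviscid dissipation. Passing the local energy equality
\[
\partial_t \tfrac{|v^\nu|^2}{2} + \div\!\Bigl((\tfrac{|v^\nu|^2}{2} + p^\nu)v^\nu\Bigr) - \nu\Delta\tfrac{|v^\nu|^2}{2} + \mu^\nu = 0
\]
to the distributional limit and using that $v^\nu \to v$ strongly in $L^2_{x,t}$ (which upgrades to $L^3_{x,t}$ by interpolation with the uniform Besov bound for $p\ge 3$), one identifies $\mu^\nu \rightharpoonup^{\ast} \mu$ as a nonnegative Radon measure satisfying the local energy balance \eqref{Local_energy} for $v$. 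The assumption that $\mu^\nu$ accumulates on a nontrivial set of dimension at most $\gamma$ in the sense of Definition \ref{Lagr_stab_Min_dim} is phrased in terms of covering numbers for Lagrangian cylinders built from coarse-grained flows of $v^\nu$, and these flows converge (after mollification at a fixed scale) to those of $v$; thus the limiting measure $\mu$ inherits the same dimensional upper bound on its support, and it is nontrivial by the "accumulation on a nontrivial set" part of the hypothesis.

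Having produced a weak Euler solution $v\in L^p_t(B^{\theta_p+\eps}_{p,\infty})$ together with a nontrivial, $\gamma$-dimensional local energy dissipation measure $\mu$, Theorem \ref{t:main_lagr_dim} directly yields a contradiction, since that theorem asserts exactly that such a combination forces $\mu\equiv 0$. The main obstacle I anticipate is step three: one has to check that the covering argument underlying Definition \ref{Lagr_stab_Min_dim} is stable under weak-$\ast$ convergence of $\mu^\nu$, which in turn requires picking the coarse-grained velocity used to define the Lagrangian cylinders at a scale uniform in $\nu$ and exploiting the strong $L^2$ convergence $v^\nu\to v$ to conclude that the characteristics of the coarse flow converge uniformly on bounded time intervals.
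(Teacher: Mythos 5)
Your overall strategy --- contradiction, uniform Besov bound on a subsequence, upgrading the $L^2_{x,t}$ convergence, identifying the limit as a weak Euler solution whose Duchon--Robert distribution equals the weak-$*$ limit $\mu$ of $\nu|\nabla v^\nu|^2$ via \eqref{DR_equals_diss}, and then invoking Theorem \ref{t:main_lagr_dim} --- is exactly the paper's proof of the rigorous version, Theorem \ref{t:lagr_vanish_visc} in Section \ref{s:vanish_visc_proof}. (The paper upgrades the convergence to $L^p_{x,t}$ by Aubin--Lions--Simon rather than by interpolation, but your interpolation route to $L^3_{x,t}$ convergence is equally sufficient for \eqref{DR_equals_diss}.)

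The genuine problem is your ``critical step'': transferring the Lagrangian--Minkowski dimension bound from the viscous measures $\mu^\nu=\nu|\nabla v^\nu|^2\,dx\,dt$ to the limit $\mu$. This step is both unnecessary and, as sketched, unworkable. It is unnecessary because the hypothesis, as made precise in Theorem \ref{t:lagr_vanish_visc}, is placed directly on $S:=\spt_{x,t}\mu$, the support of the \emph{accumulation point} of the dissipations, and Definition \ref{Lagr_stab_Min_dim} is taken relative to the \emph{limit} field $v$ (whose membership in $L^p_{x,t}$ you already have); no covering data attached to the individual $v^\nu$ or their coarse-grained flows enters the definition. It is unworkable because each $\mu^\nu$ is an absolutely continuous measure with smooth density and generically full support, so a dimension hypothesis on the supports of the $\mu^\nu$ themselves is vacuous or false, and weak-$*$ convergence of measures gives essentially no control on the dimension of $\spt\mu$ in terms of covering properties of the approximating measures; your appeal to uniform convergence of characteristics of mollified flows does not repair this. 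Once the hypothesis is read as a statement about $\mu$ and $v$, the entire step evaporates. A second, smaller gloss: nontriviality of $D^v$ as a distribution on the open set $\Omega\times(0,T)$ requires the interior formulation \eqref{0th_law_new} of the zeroth law, since a distributional limit does not see mass escaping to the time boundary or to spatial infinity; ``accumulates on a nontrivial set'' must be interpreted in that interior sense for the contradiction with $D^v\equiv 0$ to go through.
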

The previous theorem shows that, under  the assumptions of $L_{x,t}^2$ compactness (which is mild from the point of view of the observations in \cite{CG12} but in general is not rigorously justified), all the intermittent Besov norms $L^p_t(B^{\theta_p+\eps}_{p,\infty})$ must blowup in the inviscid limit, if the limiting dissipation is nontrivial and lower dimensional. Let us remark that in this context, nontrivial dissipation is usually stated as 
\begin{equation}
    \label{0th_law}
    \liminf_{\nu\rightarrow 0}\nu\int_0^T\int_{\Omega}|\nabla v^\nu|^2\,dxdt>0,
\end{equation}
which indeed might be seen as a precise mathematical formulation of the $0-$th law of turbulence.

Leaving out for a moment whether or not this is plausible for real turbulent flows, when $\gamma=d-1$ Theorem \ref{t:rough_visc}  implies that no global fractional regularity measured in the $L_x^\infty$ scale can be retained in the infinite Reynolds number limit if the dissipation is nontrivial and concentrated on a codimension$-1$ set.  That is, more precisely, $\theta_p=\sfrac{1}{p}\rightarrow 0$ as $p\rightarrow \infty$, which is equivalent to $\zeta_p=1$ for all $p$.  We refer to \cite{ISY20} for the most updated numerical simulations predicting actual values of $\zeta_p$, for $p\lesssim 12$, with plausible saturation $\zeta_p\rightarrow const.$ for $p\rightarrow\infty$.    Codimension$-1$ singularities also appear in shocks--types singularities for various compressible models. Much of the authors work contained in this paper carry over to this setting, with suitable  additional assumptions,  and it would show the authors' conclusions are sharp for a certain class of solutions. We emphasise that truly concave and bounded upper bounds on $\zeta_p$, i.e. without imposing $\zeta_p=1$ for all $p$,  can be deduced by our analysis given in Theorem \ref{t:main}. This is again related to codimension$-1$ singularities, but with a possibly weaker notion of dimensionality (see Definition \ref{d:Eul_Min_dim}). Details are given in Remark \ref{r:concave_ub}.

In Theorem \ref{t:lagr_vanish_visc} we will give the rigorous counterpart of the previous result, which relies on the Lagrangian notion of dimension.

\subsection{Comparison with the existing literature.} Let us start by noticing that under the homogeneity assumption, i.e. when $\gamma=d$, the critical threshold given in Theorem \ref{t:rough} (as well as all the results in both space and time we prove in this paper) automatically coincides with the K41 (and thus Onsager) prediction $\theta=\sfrac{1}{3}$, thus providing an honest generalisation of the energy conservation criterion of Constantin-E-Titi to the intermittent setting. Moreover, our Theorem \ref{t:rough} also matches (modulo the definition of dimension) with the energy conservation of $d-$dimensional Vortex Sheets proved in \cite{Sv09} by R. Shvydkoy.  See also the recent work \cite{DI23} where the first named author together with M. Inversi generalized such energy conservation, with no assumptions on the structure of the singular set. Vortex Sheets are intermittent incompressible flows in the class $L^\infty\cap BV$ (which by interpolation belong to $B^{\sfrac{1}{p}}_{p,\infty}$ for all $p\geq 1$) which are smooth outside a regular enough codimension$-1$ surface (the sheet) across which they have a jump discontinuity in the transversal direction. Indeed our result proves that, when singularities form at most a codimension$-1$ set, the threshold that determines energy conservation is $\theta_p=\sfrac{1}{p}$, which is exactly the critical regularity of a Vortex Sheet. We refer to Remark \ref{r:vortexsheets} for a more rigorous discussion. Note that such a threshold $\theta_p=\sfrac{1}{p}$ coincides also with codimension$-1$ Burgers shocks (that formally corresponds to $\gamma=0$ in $1d$ Burgers) which, contrary to Vortex--Sheets, are also consistent with a nontrivial energy dissipation \cite{F95}*{Pages 142-143}.

A non--quantitative precursor to Theorem \ref{t:rough} was given in \cite{Isett17}, where the second author proved (by a rather different argument) that an energy--nonconserving solution cannot be of class $L_t^p (B_{p,\infty}^{\sfrac{1}{3}})$ for any $p > 3$ if its dissipation is supported on a set of space--time Lebesgue measure zero.  Moreover, our result drastically improves \cite{DH22} (see also \cite{CKS97}, which at the best of our knowledge has been the first attempt to analytically/quantitatively analyze lower dimensional singularities) by the first author and S. Haffter by removing any sort of hypothesis on the quantitative smoothness of the velocity field $v$ outside the singular set and moreover by developing a machinery that requires only an assumption on the dimensionality of the support of the dissipation.  
Notice that the set in which the dissipation is supported is clearly always contained the complement of the set in which the velocity is $C^1$.  In contrast to our Theorem \ref{t:rough}, note that the two main theorems of \cite{CKS97,DH22} cannot really be thought as purely intermittency results: 
They both assume some quantitative behaviour of $v$ outside the singular set, which in turn implies that, if the dimension of the singular set is small enough, $v\in L^3_t (B^{\sfrac{1}{3}+}_{3,\infty})$, thus always relying on control of the third order exponent $\zeta_3$.  Instead the results we provide in this paper not only rule out the condition of quantitative smoothness outside a small set, but directly restrict the $p-$th order structure functions for $p>3$ under general conditions while placing no restriction on $\zeta_3$ that implies energy conservation.

Regarding the complementary range $p < 3$, we note that convex integration has been recently applied to produce energy--nonconserving Euler flows even beyond the $\sfrac{\zeta_p}{  p} = \sfrac{1}{3}$ threshold in \cite{BMNV21,NV22}, supporting the expectation that dissipation in the vanishing viscosity limit is consistent with structure functions exponents larger than the ones predicted by Kolmogorov.  Note that there is some heuristic consistency between our result and the intermittent Onsager theorem proved in \cite{NV22}. They produce dissipative solutions of Euler belonging to $L^{\infty-}\cap H^{\sfrac{1}{2}-}$ which by interpolation gives $B^{\sfrac{1}{p}-}_{p,\infty}$ for every $p\geq 2$, or equivalently in terms of structure functions exponents $\zeta_p=1$ (up to an $\eps$) for all $p\geq 2$, and moreover their construction indicates, at least heuristically, that spatial singularities are concentrated on a $2-$dimensional set \cite[Remark 1.2]{NV22}. This is indeed consistent with the upper bound given by Theorem \ref{t:rough}, which,  modulo the specific notion of dimension,  in the case $\gamma=2$ (in the three dimensional setting $d=3$) forces $\zeta_p\leq 1$ (at least for all $p\geq 3$) in order to allow the Euler equations to support dissipation. Thus, our result also shows that a $2-$dimensional singular set is the smallest one that can be achieved for the dissipative solutions constructed in \cite{NV22}.  Further discussions on intermittency phenomena for $p<3$ can be found in \cite{F95}.

\subsection*{Acknowledgments} L.D.R. has been partially supported by the 2015 ERC Grant 676675
FLIRT (Fluid Flows and Irregular Transport). P.I. acknowledges the support of a Sloan Fellowship and the NSF grant DMS-2055019.  Moreover, the authors would like to thank Theodore D. Drivas for very inspiring conversations about intermittency and physically relevant phenomenologies in turbulence. 

\subsection*{Data Availability \& Conflict of Interest Statements} Data sharing not applicable to this article as no datasets were generated or analysed during the current study.  All authors declare that they have no conflicts of interest.

\section{Precise Statements}\label{s:precise_results}

Before giving the precise statements of all the intermittency results in both space and time, we recall the energy balance for Euler which will allow us to rigorously define what we mean by \emph{dissipation}, together with its \emph{lower dimensionality}, in our context. 
\subsection{Local energy balance and Duchon-Robert distribution.} From \cite{DR00} we have the following local energy balance 
\begin{equation}
\label{Local_energy}
\partial_t\left( \frac{|v|^2}{2}\right)+\div \left(\left(\frac{|v|^2}{2}+p \right) v \right)=-D^v,
\end{equation}
for some distribution $D^v\in \mathcal{D}'(\Omega\times (0,T))$, known as the Duchon-Robert distribution,  whenever $v\in L^3_{loc}(\Omega\times (0,T))$. The distribution $D^v$ measures the local dissipation of energy of turbulent inviscid flows.  When the solution arises as a limit in $L^3_{x,t}$ of classical or more generally ``dissipative'' (see \cite[Section 3]{DR00} for the definition of a \emph{dissipative solution}) Navier-Stokes solutions $v^\nu$, i.e.
\begin{equation}\label{NS}
\left\{\begin{array}{l}
\partial_t v^\nu+\div (v^\nu\otimes v^\nu)  +\nabla p^\nu -\nu \Delta v^\nu=0\\
\div v^\nu = 0,
\end{array}\right.
\end{equation}
then $D^v$ is necessarily a non-negative distribution, and thus by Riesz's Theorem it is a measure, that is equal to the weak limit  of the local dissipation measures of the Navier-Stokes flows if $v^\nu$ is regular enough, that is
\begin{equation}
   \label{DR_equals_diss}
    \lim_{\nu\rightarrow 0} \nu |\nabla v^\nu|^2=D^v, \quad \text{in } \mathcal{D}'(\Omega\times (0,T)),
\end{equation}
as soon as  $v^\nu \rightarrow v$ in $L^3_{x,t}$.
For a general Euler flow in $L^3_{x,t}$ we have the formula
\begin{equation}\label{DR_measure}
D^v=\lim_{\eps\rightarrow 0} R_\eps : \nabla v_\eps \quad \text{in } \mathcal{D}'(\Omega\times (0,T)),
\end{equation}
where $v_\eps=v*\rho_\eps$ is a standard regularisation in space and $R_\eps:=v_\eps\otimes v_\eps-(v\otimes v)_\eps$ is the Reynolds--type stress tensor arising from averaging at scale $\eps$ the only nonlinear term in the equation (see for instance \eqref{local_energy_eps} below). For further formulas of the dissipation measure and its connection with some Lagrangian aspects of turbulence we refer to \cite{Dr19}.

It is known that $D^v$ is the trivial distribution $D^v\equiv 0$ whenever $ v\in L^3((0,T);B^{\theta}_{3,\infty}(\Omega))$ for $\theta>\sfrac{1}{3}$ \cites{CET94,DR00,CCFS08}.  Technically the original proof of \eqref{Local_energy} from \cite{DR00} uses a slightly different approximation for $D^v$ and the proof of the specific formula \eqref{DR_measure} has been given by the second author in \cite{Isett17}, which also shows that $\spt_{x,t}{ D^v}$ is contained in the singular support of $v$ relative to the critical conservative space, see \cite[Theorem 4]{Isett17}.

\subsection{Time intermittency}\label{Intro:time} We start with a couple of results whose goal is to investigate the case in which singularities are not time--filling but they instead concentrate on a lower dimensional set. In addition to their novelty in the current literature, they also serve as a motivation to consider the spatial intermittency in which singularities can happen for every time as discussed at the end of this section.  We analyse the singularities in time in the spirit of \cite[Theorem 1.1]{DH21} in which the lower bound on the dimension of singular times comes as a consequence of the H\"older regularity of the kinetic energy (see for instance \cite{CD18,Is2013}) 
\begin{equation}\label{energy_holdercont}
|e_v(t)-e_v(s)|\leq C|t-s|^{\frac{2\theta}{1-\theta}},
\end{equation}
for solutions $v\in L^\infty((0,T);B^\theta_{3,\infty}(\Omega))$. It is worth mentioning that from \cite{DT21} the energy regularity \eqref{energy_holdercont} is sharp. More precisely most of the solutions to Euler (in the appropriate sense) do not posses any better regularity than \eqref{energy_holdercont}, and fail to have monotonic kinetic energy on every interval. In connection to intermittency in turbulence, this result gave (at least partially) a positive answer to the conjecture from \cite{IsOh17}, indicating that energy dissipation is highly unstable upon the slightest departure from the $\sfrac{1}{3}$ law (or more specifically the regularity $B^{\sfrac{1}{3}}_{3,\infty}$).

The $L^\infty-$in time regularity assumption on $v$ is essential to get the desired H\"older regularity \eqref{energy_holdercont} of the kinetic energy, but, as already claimed in \cite{D20} while investigating the helicity regularity,  a weaker integrability in time would still imply a suitable Sobolev (or Besov) regularity of conserved quantities like the kinetic energy or the helicity. Indeed we have

\begin{proposition}\label{p:en_besov_regular}
Let $\Omega$ be $\T^d$ or the whole space $\R^d$, $d\geq 2$. Let $\theta, \beta\in (0,1)$, $p\geq 3$,  and $v\in L^{p}((0,T); B^\theta_{3,\infty}(\Omega))\cap L^{\sfrac{2p}{3}}((0,T); B^\beta_{2,\infty}(\Omega)) $ be a weak solution of Euler. Then 
$$
\norm{e_v(\cdot+h)-e_v(\cdot)}_{L^{\sfrac{p}{3}}_t}\leq C |h|^{\frac{2\beta}{1-3\theta+2\beta}},
$$
for all $h\neq 0$ and for some constant $C>0$, namely $e_v\in B^{\frac{2\beta}{1-3\theta+2\beta}}_{\sfrac{p}{3},\infty}([0,T])$ and we implicitly assumed that the time integral in the previous formula is computed for all times $t$ such that $t,t+h\in (0,T)$.  In particular, as an $L^1([0,T])$ function, $e_v$ is constant if $\theta >\sfrac13$.
\end{proposition}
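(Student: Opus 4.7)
The plan is to run a spatial Constantin--E--Titi mollification, carefully bookkept so that the two Besov assumptions on $v$ each control a different piece of the decomposition. Introduce $v_\eps := v \ast \rho_\eps$ for a standard spatial mollifier and set $e_\eps(t) := \tfrac12 \int_\Omega |v_\eps(x,t)|^2\,dx$. Convolving \eqref{E} in $x$ gives $\partial_t v_\eps + \div(v_\eps\otimes v_\eps) + \nabla p_\eps = -\div R_\eps$ with $R_\eps := (v\otimes v)_\eps - v_\eps\otimes v_\eps$, and testing against $v_\eps$ produces the identity
\[
\tfrac{d}{dt} e_\eps(t) = \int_\Omega R_\eps : \nabla v_\eps\, dx.
\]
The standard Besov commutator estimates give two \emph{pointwise--in--time} bounds exploiting the two different hypotheses:
\[
\Big|\tfrac{d}{dt} e_\eps(t)\Big| \leq \|R_\eps(t)\|_{L^{3/2}_x}\|\nabla v_\eps(t)\|_{L^3_x} \leq C\eps^{3\theta-1}\|v(t)\|_{B^\theta_{3,\infty}}^3,
\]
using the $B^\theta_{3,\infty}$ hypothesis, and, using $\trace R_\eps = (|v|^2)_\eps - |v_\eps|^2$ together with $\int(|v|^2)_\eps\, dx = \int |v|^2\, dx$,
\[
|e_v(t) - e_\eps(t)| \leq \tfrac12 \|\trace R_\eps(t)\|_{L^1_x} \leq C\eps^{2\beta}\|v(t)\|_{B^\beta_{2,\infty}}^2,
\]
using the $B^\beta_{2,\infty}$ hypothesis.

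The time regularity of $e_v$ is then extracted from the triangular splitting
\[
e_v(t+h) - e_v(t) = \big(e_v - e_\eps\big)(t+h) + \big(e_\eps(t+h) - e_\eps(t)\big) - \big(e_v - e_\eps\big)(t).
\]
Taking the $L^{\sfrac{p}{3}}_t$ norm and using $e_\eps(t+h) - e_\eps(t) = \int_0^h \tfrac{d}{ds} e_\eps(t+s)\, ds$ with Minkowski's integral inequality yields $\|e_\eps(\cdot+h) - e_\eps(\cdot)\|_{L^{\sfrac{p}{3}}_t} \leq |h|\,\|\tfrac{d}{ds} e_\eps\|_{L^{\sfrac{p}{3}}_s}$. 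The scaling works out perfectly: $\|v(\cdot)\|_{B^\beta_{2,\infty}}^2$ has $L^{\sfrac{p}{3}}_t$ norm equal to $\|v\|_{L^{\sfrac{2p}{3}}_t B^\beta_{2,\infty}}^2$, and $\|v(\cdot)\|_{B^\theta_{3,\infty}}^3$ has $L^{\sfrac{p}{3}}_t$ norm equal to $\|v\|_{L^{p}_t B^\theta_{3,\infty}}^3$, so the two hypotheses on $v$ are exactly the ones needed. This produces
\[
\|e_v(\cdot+h) - e_v(\cdot)\|_{L^{\sfrac{p}{3}}_t} \leq C_1 \eps^{2\beta} + C_2 |h|\,\eps^{3\theta-1},
\]
and balancing via $\eps \sim |h|^{\sfrac{1}{(1-3\theta+2\beta)}}$ (admissible when $1-3\theta+2\beta>0$; otherwise one lowers $\theta$ using the Besov embedding) delivers the stated exponent $\tfrac{2\beta}{1-3\theta+2\beta}$.

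For the last assertion, when $\theta > \sfrac{1}{3}$ one has $3\theta - 1 > 0$ and $2\beta > 0$, so \emph{both} terms on the right above tend to zero as $\eps \to 0$ at fixed $h$. This forces $\|e_v(\cdot+h) - e_v(\cdot)\|_{L^{\sfrac{p}{3}}_t} = 0$ for every $h$, hence $e_v$ is constant as an $L^1$ function on $[0,T]$. The only real subtlety in the argument is the matching of the time integrability indices in the two Besov hypotheses to the powers $\|v(t)\|^2$ and $\|v(t)\|^3$ that appear on the right of the CET--type estimates; the spatial estimates themselves are routine and the temporal optimization is classical.
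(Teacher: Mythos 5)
Your proposal is correct and follows essentially the same route as the paper: the identical three--term splitting via $e_{v_\eps}$, the quadratic commutator bound $|e_v-e_{v_\eps}|\lesssim \eps^{2\beta}\|v\|^2_{B^\beta_{2,\infty}}$ for the outer terms, the CET bound $\eps^{3\theta-1}\|v\|^3_{B^\theta_{3,\infty}}$ on $\tfrac{d}{dt}e_{v_\eps}$ for the middle term, and the optimization $\eps\sim|h|^{1/(1-3\theta+2\beta)}$ (the paper uses Young's convolution inequality where you use Minkowski's integral inequality, an immaterial difference). Your direct argument for the final assertion --- sending $\eps\to 0$ at fixed $h$ when $\theta>\sfrac13$ rather than invoking the optimized exponent --- is a clean touch that sidesteps the sign issue in $1-3\theta+2\beta$.
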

The previous proposition generalizes the Sobolev regularity  $e_v\in W^{1,\sfrac{p}{3}}([0,T])$ obtained in \cite[Theorem 1.6]{Is2013} when $\theta=\sfrac13$.  Moreover, we make a distinction between the two spatial regularities ($\theta$ and $\beta$) of the velocity $v$ when measured in the two different integrability scales $L_x^3$ and $L_x^2$, to allow the second order structure function to have higher regularity exponents, which is expected from observable turbulence.  We remark that in the periodic case $\Omega=\T^d$ the assumption $v(t)\in B^\beta_{2,\infty}(\Omega)$ is redundant whenever $\beta\leq \theta$, and one gets
$$
\norm{e_v(\cdot+h)-e_v(\cdot)}_{L^{\sfrac{p}{3}}_t}\leq C |h|^{\frac{2\theta}{1-\theta}}
$$
by only assuming $v\in L^{p}((0,T); B^\theta_{3,\infty}(\Omega))$.  This is clearly not possible when $\Omega=\R^d$, where a suitable spatial regularity $v(t)\in B^\beta_{2,\infty}(\R^d)$ is essential to estimate the quadratic terms that always arise when splitting the energy increments with respect to the energy $e_{v_\eps}$ of the averaged velocity $v_\eps$ (see the estimate \eqref{increment1-3} below).

For sake of clarity let us define what we mean by conservative/non--conservative solutions in this context in which the kinetic energy $e_v$ is not necessarily continuous.
\begin{definition}[Conservative/non--conservative solutions]\label{d:cons_sol}
Let $v\in L^2((0,T)\times\Omega)$ be a weak solution to Euler. We say that $v$ is a conservative solution if $e'_v\equiv 0$ in $\mathcal{D}'((0,T))$, i.e.
\begin{equation}
    \label{e_v_constant_distribution}
    \langle e_v',\varphi\rangle =0,\quad \forall \varphi\in C^\infty_c((0,T)).
\end{equation}
Consequently, we call non--conservative solutions the ones for which \eqref{e_v_constant_distribution} fails.
\end{definition}
Being a conservative solution according to the previous definition means that $e_v$ is a constant in the sense of $L^1([0,T])$ functions, which coincides with the usual definition $e_v(t)=e_v(0)$ for all $t\in [0,T]$ as soon as $v\in C^0([0,T];L^2(\Omega))$. However, we have decided not to add any continuity assumption on the kinetic energy in order to keep the results more general.

In what follows we will denote $\overline \dim_{\mathcal{M}} S_T$ the upper Minkowski dimension of the set $S_T\subset (0,T)$ (see Section \ref{Sec:tools} for the rigorous definition). 
Similarly to \cite[Lemma 2.1]{DH21}, we then get the following

\begin{theorem}\label{t:en_cons_in_time}
Let $\Omega$ be $\T^d$ or $\R^d$, $d\geq 2$.  Let $p\geq 3$, $\theta	\in (0,\sfrac13)$,  $\beta\in (0,1)$ and $v\in L^{p}((0,T);B^\theta_{3,\infty}(\Omega))\cap L^{\sfrac{2p}{3}}((0,T);B^\beta_{2,\infty}(\Omega))$ be a weak solution of Euler. Suppose that there exists a closed set of times $S_T\subset (0,T)$ with $\overline \dim_{\mathcal M}S_T=\gamma <1$ 
such that $v$ is conservative on $S_T^c \times \Omega$.
Then  $v$ conserves the kinetic energy if 
$$
\frac{2\beta}{1-3\theta+2\beta}>1-\frac{p-3}{p}(1-\gamma).
$$
 \end{theorem}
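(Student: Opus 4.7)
The plan is to reduce the statement to a purely one-dimensional real-analysis question about the kinetic energy profile $e_v(t)$ and then play the Besov regularity of $e_v$ against the covering numbers of $S_T$ via mollification. First I would apply Proposition~\ref{p:en_besov_regular} to infer
\[
e_v\in B^{\alpha}_{q,\infty}([0,T]),\qquad \alpha:=\frac{2\beta}{1-3\theta+2\beta}\in(0,1),\quad q:=\frac{p}{3}\geq 1.
\]
Since $1-\alpha=(1-3\theta)/(1-3\theta+2\beta)$ and $q'=q/(q-1)=p/(p-3)$, the hypothesis on $\gamma$ is exactly the clean inequality $\gamma<1-(1-\alpha)q'$, which is the form naturally produced by the estimates below. (For $p=3$ the statement is vacuous, so we may assume $p>3$.)

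Second, I would introduce a standard even mollifier $\rho\in C^\infty_c((-1,1))$, $\int\rho=1$, and set $\rho_r(s):=r^{-1}\rho(s/r)$ and $u_r:=e_v*\rho_r$ on a slightly enlarged time interval (the extension by reflection or by constant does not affect test functions with compact support in $(0,T)$). Using $\int\rho_r'\,ds=0$ together with Minkowski's inequality and the Besov bound of Proposition~\ref{p:en_besov_regular},
\[
\|u_r'\|_{L^q}\leq \int \|e_v(\cdot-s)-e_v(\cdot)\|_{L^q}\,|\rho_r'(s)|\,ds \;\lesssim\; r^{\alpha-1}.
\]

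Third, I would exploit the hypothesis that $v$ is conservative on $S_T^c$, which means $e_v$ is constant on each connected component of the open set $S_T^c$. A pointwise inspection then gives $u_r'(t)=0$ whenever $\dist(t,S_T)>r$: indeed, in that case $(t-r,t+r)$ is contained in a single component of $S_T^c$ where $e_v$ is constant, and $\int\rho_r'=0$ forces the integral to vanish. Hence $\supp u_r'\subset S_T^{(r)}:=\{t:\dist(t,S_T)\leq r\}$, and the upper Minkowski dimension assumption supplies $|S_T^{(r)}|\lesssim_{\gamma'} r^{1-\gamma'}$ for every $\gamma'>\gamma$. For any $\varphi\in C^\infty_c((0,T))$, H\"older's inequality combined with the support localization then yields
\[
\bigl|\langle u_r',\varphi\rangle\bigr| \leq \|\varphi\|_{L^\infty}\|u_r'\|_{L^q}|S_T^{(r)}|^{1/q'} \;\lesssim_{\varphi}\; r^{\alpha-1+(1-\gamma')/q'}.
\]
Choosing $\gamma'>\gamma$ close enough to $\gamma$ the exponent is strictly positive by hypothesis, so sending $r\to 0$ and using $u_r'\to e_v'$ in $\mathcal{D}'((0,T))$ gives $\langle e_v',\varphi\rangle=0$ for every $\varphi$, which is the energy conservation of Definition~\ref{d:cons_sol}.

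The main obstacle is really only the three-scale bookkeeping: the $r^{\alpha-1}$ loss coming from differentiating the mollifier, the $r^{\alpha}$ gain from the modulus of continuity of $e_v$, and the $r^{(1-\gamma')/q'}$ gain from the small measure of the singular neighborhood must be balanced exactly so that the final exponent becomes positive precisely under the stated algebraic condition on $\gamma$. No new PDE input is needed beyond Proposition~\ref{p:en_besov_regular}; the argument is essentially a \emph{mollify–localize–H\"older} template of the type used in \cite{DH21}, upgraded from the Lipschitz/H\"older scale to the full Besov scale to accommodate the integrability in time $L^{p/3}$ rather than $L^\infty$.
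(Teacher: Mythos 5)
Your proof is correct, and it reaches the paper's conclusion by a genuinely different mechanism. The paper also starts from Proposition~\ref{p:en_besov_regular}, but then embeds $B^{\alpha}_{\sfrac{p}{3},\infty}\subset W^{\alpha-\eps,\sfrac{p}{3}}$, views $e_v'$ as an element of the negative-order space $\bigl(W^{1+\eps-\alpha,(\sfrac{p}{3})'}\bigr)^*$, and pairs it against $\eta_\delta\varphi$ for a cutoff $\eta_\delta$ adapted to $(S_T)_\delta$, estimating $\|\eta_\delta\|_{W^{s,q}}$ by the interpolation inequality $\|\eta_\delta\|_{W^{s,q}}\leq\|\eta_\delta\|_{L^q}^{1-s}\|\eta_\delta'\|_{L^q}^{s}$ together with $\mathcal{H}^1((S_T)_\delta)\lesssim\delta^{1-\tilde\gamma}$. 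You instead mollify $e_v$ itself, observe that $u_r'$ vanishes at distance $>r$ from $S_T$ (because $e_v$ is locally constant on $S_T^c$ and $\int\rho_r'=0$), and conclude by H\"older on the set $S_T^{(r)}$. The two computations produce the identical exponent $\alpha-1+(1-\gamma')\sfrac{(p-3)}{p}$, so the final algebraic condition on $\gamma$ is the same; what your route buys is the avoidance of both the fractional Sobolev duality and the $\eps$-loss from the Besov-to-Sobolev embedding (you pay the same loss anyway in passing from $\gamma$ to $\gamma'>\gamma$ via \eqref{Mink_asympt}, so nothing is gained in the final statement), and it replaces the unproved interpolation bound \eqref{est_on_eta_time} for the cutoff by an elementary first-order support argument. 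Two small points you should make explicit in a final write-up: that ``conservative on $S_T^c\times\Omega$'' is being read as $e_v'=0$ in $\mathcal{D}'(S_T^c)$, hence $e_v$ is a.e.\ constant on each connected component of $S_T^c$ (this is also how the paper uses the hypothesis, asserting $\supp e_v'\subset S_T$); and that all estimates are performed on a compact neighbourhood of $\supp\varphi$ with $r$ smaller than its distance to $\{0,T\}$, so the extension of $e_v$ past the endpoints is never actually invoked.
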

We note that by \cite{CCFS08} a sufficient condition for being conservative outside $S_T$ is that $v$ is locally of class $L_t^3B^{\sfrac13}_{3,c_0}$ on $S_T^c\times \Omega$.
 
The proof of the previous theorem will be given in Section \ref{s:time_intermitt}. As a corollary we obtain the corresponding lower bound on the Minkowski dimension of singular times of non--conservative weak solutions of Euler, which can be equivalently read an upper bound on the regularity exponents $\theta,\beta$.
 \begin{corollary}\label{c:lower_bound_time}
Let $\Omega$ be $\T^d$ or $\R^d$, $d\geq 2$. Let $p\geq 3$, $\theta	\in (0,\sfrac13)$,  $\beta\in (0,1)$ and $v\in L^{p}((0,T);B^\theta_{3,\infty}(\Omega))\cap L^{\sfrac{2p}{3}}((0,T);B^\beta_{2,\infty}(\Omega))$ be a non--conservative weak solution of Euler. Suppose that there exists a closed set of times $S_T\subset (0,T)$ with $\overline \dim_{\mathcal M}S_T=\gamma <1$ 
such that $v$ is conservative on $S_T^c \times \Omega$.  
Then it must hold 
\begin{equation}\label{deviation_theta}
\frac{2\beta}{1-3\theta+2\beta}\leq 1-\frac{p-3}{p}(1-\gamma).
\end{equation}
 \end{corollary}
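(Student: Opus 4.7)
The plan is to obtain Corollary~\ref{c:lower_bound_time} as an immediate contrapositive of Theorem~\ref{t:en_cons_in_time}. Specifically, under the standing hypotheses on $v$, $\theta$, $\beta$, $p$ and on the closed set $S_T\subset (0,T)$ with $\overline{\dim}_{\mathcal M}S_T=\gamma<1$, Theorem~\ref{t:en_cons_in_time} asserts that the strict inequality
\[
\gamma < 1- \frac{p}{p-3}\left(1-\frac{2\beta}{1-3\theta+2\beta}\right)
\]
forces $v$ to be conservative. Since by assumption $v$ is \emph{not} conservative, this strict inequality must fail, yielding the reverse (non-strict) bound
\[
\gamma \geq 1- \frac{p}{p-3}\left(1-\frac{2\beta}{1-3\theta+2\beta}\right),
\]
which is precisely the first claim of the corollary. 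No additional analytic input is needed at this step; the entire energy-regularity machinery (Proposition~\ref{p:en_besov_regular}) and the cut-off argument handling $S_T$ are already absorbed in Theorem~\ref{t:en_cons_in_time}.

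The second claim, namely the equivalent formulation \eqref{deviation_theta}, is then a purely algebraic rearrangement. Starting from the dimension bound above, I would isolate $1-\gamma$ on one side, multiply through by the positive factor $\tfrac{p-3}{p}$ (which is legitimate since $p\geq 3$, and the case $p=3$ is vacuous since both sides reduce to the trivial bound $\gamma\geq 1$), and then move the term $\tfrac{2\beta}{1-3\theta+2\beta}$ across the inequality to obtain
\[
\frac{2\beta}{1-3\theta+2\beta}\leq 1-\frac{p-3}{p}(1-\gamma).
\]
This equivalence is reversible, so the two statements are interchangeable.

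There is essentially no genuine obstacle here: the nontrivial content has been established in Theorem~\ref{t:en_cons_in_time}, and the corollary is a reformulation. The only minor points worth mentioning in the write-up are (i) that the hypothesis $\gamma<1$ is retained from Theorem~\ref{t:en_cons_in_time} and is consistent with the conclusion (since the right-hand side of the dimension lower bound is always $<1$ under the given parameter ranges, using $\theta<\sfrac13$ and $\beta>0$), and (ii) that the denominator $1-3\theta+2\beta$ is strictly positive because $\theta<\sfrac13$, so the fraction $\tfrac{2\beta}{1-3\theta+2\beta}$ is well-defined and belongs to $(0,1)$, guaranteeing that both sides of \eqref{deviation_theta} are meaningful.
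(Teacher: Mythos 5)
Your proposal is correct and coincides with the paper's (implicit) argument: the corollary is stated there without a separate proof precisely because it is the contrapositive of Theorem~\ref{t:en_cons_in_time}, followed by the same algebraic rearrangement to reach \eqref{deviation_theta}. The only tiny inaccuracy is your parenthetical about $p=3$: there the first form of the bound degenerates (the factor $\sfrac{p}{(p-3)}$ is undefined and the right-hand side should be read as $-\infty$), while \eqref{deviation_theta} reduces to the trivially true statement $\sfrac{2\beta}{(1-3\theta+2\beta)}\leq 1$, not to $\gamma\geq 1$ --- but this does not affect the substance.
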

Note that, as soon as $p>3$ and $\gamma<1$,  inequality \eqref{deviation_theta} implies that dissipative solutions must have their $L^3$ regularity index $\theta<\sfrac13$, no matter the value of $\beta$. The gap between $\theta$ and $\sfrac{1}{3}$ becomes larger when the $L^2$ regularity index $\beta$ is larger, which means that higher values of $\zeta_2$ imply larger downwards deviations from K41 in the third order structure functions -- that is unless, of course, the time support of the dissipation has dimension $\gamma = 1$.  Since a gap where the $L^3$ regularity index $\theta$ lies below $\sfrac{1}{3}$ may be regarded as unphysical, for example in its contrast to the $4/5-$ths law, 
we are motivated to find conditions under which there are no gaps in time having zero dissipation but nonetheless intermittency in the structure functions can be observed.  The main theorems that follow allow for this behavior.

\subsection{Spatial intermittency}
To state our main results concerning lower dimensional singularities in space we will need different notions of Minkowski--type dimension of the dissipative set $S\subset \Omega\times (0,T)$, that is the support of the Duchon-Robert distribution $D^v$ from \eqref{Local_energy}, i.e.
$$
S:=\spt_{x,t} D^v.
$$
In the following we will refer to the support of the distribution $D^v$ as the \emph{dissipation support} and we will denote by $\mathcal{H}^k$ the $k-$dimensional Hausdorff measure, $k\in \N$.

\begin{definition}[Eulerian time stable Minkowski dimension]\label{d:Eul_Min_dim}
Let $\Omega$ be $\R^d$ or $\T^d$. Given a set $S\subset \Omega\times (0,T)$ we will say that $S$ has Eulerian time stable Minkowski dimension at most $\gamma\in [0,d]$, with instability parameter $\beta>0$,  if, by letting $\tau=\delta^\beta$ and defining 
$$
(S)_{\delta,\tau}:=\left\{\left(x+h,t+s\right)\, :\, (x,t)\in S,\, |h|<\delta,\, |s|<\tau \right\},
$$
to be the space--time  $\delta,\tau-$neighbourhood of the set $S$, it holds
\begin{equation}\label{E_Min_dim_gamma_def}
\mathcal{H}^{d+1}((S)_{\delta,\tau}\cap M)\lesssim \delta^{d-\gamma},
\end{equation}
for all $\delta>0$ sufficiently small and every bounded set $M\subset \Omega\times (0,T)$, where the implicit constant in \eqref{E_Min_dim_gamma_def} might depend on $d$, $\beta$, $\gamma$ and $M$, but it is otherwise independent on $\delta$.
\end{definition}
The necessity of introducing the bounded set $M$ in \eqref{E_Min_dim_gamma_def} is to cover the case $\Omega=\R^d$ and allowing the dissipation support to be unbounded (in space).  However, we remark that since we are willing to prove a \emph{local} energy conservation result according to \eqref{Local_energy}, the local properties of the dissipation support are the only ones that really matter.  Clearly when $\Omega=\T^d$ the set $M$ does not play any role and it can simply be removed.  

The parameter $\beta$ is called the ``time instability parameter'' because it quantifies how long in time a covering by cylinders can remain a covering, and thus quantifies how ``rough'' the motion of the set is in time.  Moreover, by the previous definition it readily follows that if a set $S$ has Eulerian Minkowski dimension at most $\gamma$, with time instability parameter $\beta>0$, then the same Eulerian dimension upper bound holds true for every other instability parameter $\beta'>\beta$. 
Similar to the standard Minkowski one, our Eulerian notion of dimension can be reformulated in terms of a box--counting one: The set $S$ can be covered by at most $\tau^{-1}\delta^{-\gamma}$ space--time cylinders $\big\{\mathcal{C}_i^{\delta,\tau}\big\}_i$ of spatial radius $\delta$ and time length $\tau=\delta^\beta$. Notice that all of such cylinders have $\mathcal{H}^{d+1}\big(\mathcal{C}_i^{\delta,\tau}\big)=\tau\delta^d$.

\noindent{\bf Example 1.} An easy example of a set that satisfies the definition above is the Galilean transformation of a $\gamma-$dimensional set.

\noindent{\bf Example 2.}  More generally, consider the image of a $\gamma-$Minkowski dimensional set $\tilde S\subset \T^d$ through the flow of a Lipschitz 
vector field $U\in L^\infty((0,T); \text{Lip}(\Omega))$. More precisely if $\tilde S\subset \T^d$ satisfies 
$$
\mathcal{H}^d\left(\big(\tilde S\big)_\delta\right)\lesssim \delta^{d-\gamma},
$$
then the space--time set
$$
S=\left\{ \left( \phi^U(x,t),t\right)\, :\, x\in \tilde S, \,t\in (0,T)\right\}
$$
has Eulerian time stable Minkowski dimension at most $\gamma$ with instability parameter $\beta= 1$, where $\phi^U=\phi^U(x,t)$ is the flow associated to the vector field $U=U(x,t)$, i.e.
$$
\left\{\begin{array}{l}
\frac{d}{dt}\phi^U(x,t)=U\left(\phi^U(x,t),t\right)\\ \\
\phi^U(x,0)=x\,.
\end{array}\right.
$$
Indeed, consider a covering of $\tilde S$ by $O(\delta^{-\gamma})$ $\delta-$balls.  Evolving the covering, each ball will evolve in a bounded time $T$ to a (possibly distorted) ball of diameter at most $C \delta$ by the Gr\"onwall inequality, where $C\sim e^{T \| \nabla U \|_{L^\infty_{x,t}}}$.  Thus any time $t$ slice of $S$ can be covered by $O(\delta^{-\gamma})$ $\delta-$balls, and if we double the radii of these balls and use the boundedness of $U$, such a covering at time $t$ remains a covering for a time interval of size $\sim \delta$.  This observation combined with the equivalence observed in the following remark shows that $S$ has upper Minkowski dimension at most $\gamma+1$ in space-time and therefore has Eulerian Minkowski dimension at most $\gamma$ with instability parameter $\beta = 1$.

\begin{remark}[Standard Minkowski vs Definition \ref{d:Eul_Min_dim}]\label{r:Eul=Minkow}
Notice that when the instability parameter $\beta\geq 1$ then it readily follows that the Eulerian Minkowski dimension from Definition \ref{d:Eul_Min_dim} is implied by the more common space--time $\gamma+1$ upper Minkowski dimension (see Section \ref{Sec:tools} for the definition), and they actually coincide if $\beta=1$, while for $\beta<1$ the Eulerian--Minkowski dimension is strictly stronger. Indeed, suppose that $S\subset \T^d \times (0,T)$ has upper Minkowski dimension at most $\gamma+1$. Then by \eqref{Mink_asympt}, denoting by $[S]_\delta$ the space--time tubular neighbourhood of width $\delta$ of the set $S$, one has
$$
\mathcal{H}^{d+1}([S]_{\delta})\lesssim \delta^{d+1-(\gamma+1)}=\delta^{d-\gamma},
$$
which, together with $(S)_{\delta,\delta^\beta}\subset [S]_{2\delta} $ for every $\beta\geq 1$, gives that $S$ has Eulerian time stable Minkowski dimension at most $\gamma$ with instability parameter $\beta\geq 1$.  \end{remark}

Theorem \ref{t:main} below can thus be stated purely in terms of the classical Minkowski dimension in space--time, while Eulerian--Minkowski dimension with $\beta < 1$ will be used to formulate the Corollary~\ref{t:main_lagr} below, which has a stronger conclusion.  Note also that the Euler equations have separate scaling symmetries in space and time, so demanding a particular relationship between the two variables in defining dimension may not be entirely natural.  We therefore have introduced the general notion of Eulerian time stable Minkowski dimension at this point to draw attention to the importance of the concept of {\it time stability} (informally, how long do coverings by balls remain coverings), in obtaining our conclusions.

We are now ready to state our first result about spatial intermittency.

\begin{theorem}[Eulerian intermittency]
\label{t:main}
Let $\Omega$ be $\R^d$ or $\T^d$, $d\geq 2$. Let $\gamma\in [0,d]$, $p\in[3,\infty]$, $\theta\in (0,1)$ and $v\in L^p((0,T);B^\theta_{p,\infty}(\Omega))$ be a weak solution of Euler.  Assume that the dissipation set $S\subset \Omega\times (0,T)$  has Eulerian time stable Minkowski dimension at most $\gamma$ with instability parameter $\beta=1$ according to Definition \ref{d:Eul_Min_dim} (or equivalently, in virtue of Remark \ref{r:Eul=Minkow}, $S$ has space--time Minkowski dimension at most $\gamma+1$), namely 
\begin{equation}\label{E_Min_dim_gamma}
\mathcal{H}^{d+1}\left((S)_{\delta,\delta}\cap M\right)\lesssim \delta^{d-\gamma},
\end{equation}
for all $\delta>0$ sufficiently small and some bounded set $M\subset \Omega\times (0,T)$.
Then $D^v\equiv 0$ if 
\begin{equation}\label{lowerbound_theta}
\frac{2\theta}{1-\theta}>1-(d-\gamma)\frac{p-3}{p}.
\end{equation}

\end{theorem}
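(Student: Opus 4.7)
The approach is to localize the Constantin--E--Titi energy conservation argument near the dissipation set $S=\spt_{x,t} D^v$ by means of a cutoff whose measure reflects the dimensional hypothesis. First I would use the covering provided by Definition~\ref{d:Eul_Min_dim} with $\beta=1$: given a bounded $M \subset \Omega \times (0,T)$, cover $S \cap M$ by $\lesssim \delta^{-(\gamma+1)}$ space-time balls of radius $\delta$, form an adapted partition of unity, and set $\chi_\delta \in C^\infty_c(\Omega\times(0,T))$ equal to $1$ on a neighborhood of $S \cap M$, supported in $(S)_{2\delta, 2\delta}$, with $\|\partial_t \chi_\delta\|_{L^\infty}, \|\nabla \chi_\delta\|_{L^\infty} \lesssim \delta^{-1}$ and $\mathcal H^{d+1}(\spt \chi_\delta) \lesssim \delta^{d-\gamma}$ by~\eqref{E_Min_dim_gamma}. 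Because $\spt D^v \subset S$, for any $\varphi \in C^\infty_c$ we have $\langle D^v, \varphi\rangle = \langle D^v, \chi_\delta \varphi\rangle$, so it suffices to show $\langle D^v, \chi_\delta \varphi\rangle \to 0$ as $\delta \to 0$.

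Starting from the mollified Euler equation $\partial_t v_\eps + \div(v_\eps\otimes v_\eps) + \nabla p_\eps = -\div R_\eps$ with $R_\eps = (v \otimes v)_\eps - v_\eps \otimes v_\eps$, contracting with $v_\eps$ gives the mollified local energy balance
\[
\partial_t \tfrac{|v_\eps|^2}{2} + \div\bigl[(\tfrac{|v_\eps|^2}{2} + p_\eps) v_\eps + R_\eps v_\eps\bigr] = R_\eps : \nabla v_\eps,
\]
whose right-hand side converges to $D^v$ distributionally. Testing against $\chi_\delta \varphi$, integrating by parts in space-time, and passing to $\eps \to 0$ writes $\langle D^v, \chi_\delta\varphi\rangle$ as an explicit combination of flux integrals. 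Terms where the derivative falls on $\varphi$ are controlled by H\"older together with $|\spt\chi_\delta| \lesssim \delta^{d-\gamma}$, contributing at most $\delta^{(d-\gamma)(p-3)/p}$, which vanishes as $\delta \to 0$ when $\gamma < d$. The critical contributions are those in which $\partial_t$ or $\nabla$ acts on $\chi_\delta$, since these come with the dangerous factor $\delta^{-1}$.

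To absorb this loss I would combine three ingredients. First, the Constantin--E--Titi \emph{third-order commutator} identity, writing $R_\eps : \nabla v_\eps$ as a trilinear integral in velocity increments, yields $\|R_\eps:\nabla v_\eps\|_{L^{p/3}} \lesssim \eps^{3\theta-1}$ and analogous trilinear gains for the cubic piece of the Bernoulli flux. Second, the \emph{double regularity of the pressure} $p \in L^{p/2}_t(B^{2\theta}_{p/2,\infty})$ from \cite{CD18, CDF20, Is2013} treats the pressure contribution on the same footing as the cubic term and sharpens the control of $\partial_t v_\eps$ via Euler. Third, the $\delta^{-1}$ loss from $\partial_t \chi_\delta$ applied to $|v|^2$ is converted into a combined mollification/time-derivative estimate: subtracting $|v_\eps|^2$ from $|v|^2$ costs $\eps^{2\theta}$ in $L^{p/2}$, while the smooth part $|v_\eps|^2$ is transferred by integration by parts in time, producing a $\|\partial_t|v_\eps|^2\|\lesssim \eps^{2\theta-1}$ factor using the pressure's double regularity and the commutator. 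Choosing $\eps = \delta^{1/(1-\theta)}$ balances the mollification error $\eps^{2\theta}$ against the time scale $\delta$ and produces the characteristic energy H\"older exponent $\tfrac{2\theta}{1-\theta}$ from \eqref{energy_holdercont}.

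Putting the estimates together, the principal error scales as $\delta^{\frac{2\theta}{1-\theta} -1 + (d-\gamma)\frac{p-3}{p}}$, which vanishes as $\delta \to 0$ precisely under the hypothesis \eqref{lowerbound_theta}. The main difficulty I anticipate is the bookkeeping: coordinating the third-order commutator, the pressure's double regularity, and the Euler identity simultaneously so that none of the side terms (from subleading pieces of the trilinear flux or from the Reynolds correction $R_\eps v_\eps \cdot \nabla\chi_\delta$) are worse than the main $\tfrac{2\theta}{1-\theta}$ contribution, and designing the partition of unity so that $\|\partial_t \chi_\delta\|_{L^\infty}, \|\nabla \chi_\delta\|_{L^\infty}$ and the support measure genuinely behave like $\delta^{-1}$ and $\delta^{d-\gamma}$, rather than amplifying through the full $\delta^{-(\gamma+1)}$ covering count. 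Once these two points are carefully handled and the balance $\eps = \delta^{1/(1-\theta)}$ is taken, letting $\delta \to 0$ gives $\langle D^v, \varphi\rangle = 0$ for every $\varphi$, and hence $D^v \equiv 0$.
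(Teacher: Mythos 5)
Your overall skeleton --- localizing with a cutoff adapted to $(S)_{\delta,\delta}$, coupling the mollification scale to the cutoff scale via $\eps=\delta^{1/(1-\theta)}$, and aiming at a final bound of order $\delta^{\frac{2\theta}{1-\theta}-1+(d-\gamma)\frac{p-3}{p}}$ --- is the right one, and your accounting for the terms where derivatives fall on $\varphi$ is correct. But the mechanism you propose for absorbing the $\delta^{-1}$ loss does not work, and the missing ingredient is precisely the pivot of the paper's proof. You first pass to the limit $\eps\to 0$ to write $\langle D^v,\chi_\delta\varphi\rangle$ as flux integrals of the \emph{unmollified} quantities $|v|^2$ and $(\tfrac{|v|^2}{2}+p)v$, and then try to reintroduce $\eps$ by subtracting $|v_\eps|^2$ from $|v|^2$. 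The plain difference $|v|^2-|v_\eps|^2=(v-v_\eps)\cdot(v+v_\eps)$ is only $O(\eps^{\theta})$ in $L^{p/2}$, not $O(\eps^{2\theta})$ as you claim: the quadratic gain belongs exclusively to the commutator $\tr R_\eps=(|v|^2)_\eps-|v_\eps|^2$, i.e.\ to the difference between the \emph{mollified product} and the \emph{product of mollifications}. Likewise the bound $\|\partial_t|v_\eps|^2\|\lesssim\eps^{2\theta-1}$ is false: by \eqref{local_energy_eps}, $\partial_t\tfrac{|v_\eps|^2}{2}$ contains the full Bernoulli flux $-\div\big((\tfrac{|v_\eps|^2}{2}+p_\eps)v_\eps\big)$, which after integration by parts against $\varphi\chi_\delta$ is $O(1)$ in $L^{\sfrac{p}{3}}$ and lands you back on a non--small contribution of order $\delta^{(d-\gamma)\frac{p-3}{p}-1}$. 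With only $\eps^{\theta}$ gains the balance $\eps=\delta^{1/(1-\theta)}$ produces the exponent $\tfrac{\theta}{1-\theta}$ rather than $\tfrac{2\theta}{1-\theta}$, so the stated threshold \eqref{lowerbound_theta} is not reached by this route.

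What the paper does instead is compare two \emph{equations} rather than two functions: it subtracts the energy balance of the mollified velocity \eqref{local_energy_eps} from the mollification \eqref{moll_local_energy} of the local energy balance. All the $O(1)$ Bernoulli fluxes cancel and only genuine commutators survive: $\partial_t \tr R_\eps$ from the time derivative, the pressure commutator $P_\eps=(pv)_\eps-p_\eps v_\eps$ (estimated via the double regularity \eqref{p_double_reg}), and the cubic commutator $K^v_\eps$ from Lemma \ref{l:higher_average}, each gaining $\eps^{2\theta}$ or $\eps^{3\theta-1}$ after a derivative. The price of mollifying the whole equation is that its right--hand side is $D^v*\rho_\eps$ rather than $D^v$; this is handled by the support observation $\langle D^v*\rho_\eps,\varphi(1-\chi_\delta)\rangle=0$ for $\eps\le\sfrac{\delta}{2}$, which lets one decompose $\langle D^v_\eps,\varphi\rangle$ into $\langle D^v_\eps,\varphi\chi_\delta\rangle$, a term $\langle D^v_\eps-D^v*\rho_\eps,\varphi\rangle$ that vanishes trivially, and a commutator--only term tested against $\varphi\chi_\delta$. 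Finally, the leftover $\partial_t\tr R_\eps$ is grouped with $v_\eps\cdot\nabla\tr R_\eps$ into the advective derivative $D_{t,v_\eps}\tr R_\eps$ before integrating by parts, so that the single $\delta^{-1}$ loss from $(\partial_t+v_\eps\cdot\nabla)\chi_\delta$ is paid against the $\eps^{2\theta}$ of $\tr R_\eps$; this is exactly the term producing $\eps^{2\theta}\delta^{(d-\gamma)\frac{p-3}{p}-1}$ and hence the exponent $\tfrac{2\theta}{1-\theta}$. Without this equation--level cancellation your argument cannot close.
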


Since $\sfrac{2\theta}{(1-\theta)}<1\iff\theta <\sfrac{1}{3}  $, it is clear that as soon as $\gamma<d$ (and assuming the dissipation to be nontrivial), the above theorem implies intermittency in every structure function\footnote{The assumption of $L_t^p$ can be replaced by $L_t^3$ or $L_t^q$ with $3 \leq q \leq p$ without changing the argument. The details are given in Remark \ref{r:eulerian_L3_in_time}.} of order $p> 3$. 

\begin{remark}
    \label{r:concave_ub}
    In view of the previous theorem, together with the equivalence observed in Remark \ref{r:Eul=Minkow},  when the space--time Minkowski dimension of the set in which the nontrivial dissipation concentrates is at most $d$, then it must hold $\sfrac{2\theta}{(1-\theta)} \leq\sfrac{3}{p}$, which in terms of structure function exponents reads as 
    $$
\zeta_p\leq \zeta_p^*= \frac{3p}{3+2p}.
$$
Coherently with what recent simulations show \cite{ISY20}, the above expression of $\zeta_p^*$, as a function of $p\geq 3$, is concave, bounded and  gives $\zeta^*_3=1$. Even if our analysis is limited to the range $p\geq 3$, the fact that 
$$
\frac{d}{dp}\zeta_p^*\bigg|_{p=3}=\frac19<\frac{1}{3},
$$
also matches, assuming a $C^1$ behaviour in $p$, with the empirical evidence of upward deviations from K41 predictions for $p<3$.
\end{remark}

In view of Examples 1 and 2 of sets that satisfy Definition~\ref{d:Eul_Min_dim}, we naturally obtain the following dichotomy, which is in fact an intuitive interpretation of Theorem \ref{t:main}.

\noindent{\textbf{Dichotomy} (Interpretation of Theorem \ref{t:main})\textbf{.}} If the dissipation is nontrivial in an incompressible non--viscous flow and is supported on a (spatially) lower dimensional set, then either the dissipation support is moving very rapidly in time (i.e. it cannot be contained in the Lipschitz flow of a set with the same spatial dimension) or there is intermittency in every $p-$th order structure function with $p> 3$.

The above dichotomy yields one reason to look for a (possibly stronger) notion of dimension that will allow for a stronger result.  That is, we may be interested in dissipation sets that do move rapidly in time if they are indeed adapted to a low regularity solution.  Perhaps more importantly, Theorem~\ref{t:main} gives a result that is weaker in terms of intermittency than the prediction of the $\beta$--model that we desire (since $3\theta>\sfrac{2\theta}{(1-\theta)}$), and thus we look for a dimensional assumption that will yield a stronger result.

Intuitively, based on the Taylor hypothesis, we expect the fine scale oscillations of the velocity field, which in turn support the dissipation, to be carried along the coarse scale flow.  We will therefore put forth a definition of dimension that is tailored to the flow map of a smooth approximation to our rough vector field.  

In what follows we will denote by $\Phi^V_s:\Omega\times (0,T)\rightarrow \T^d\times (0,T)$ the flow map associated to the vector field $V:\Omega\times (0,T)\rightarrow \R^d$, namely 
\begin{equation}\label{flow_def}
\Phi^V_s(x,t):=\left(\phi^V_s(x,t),t+s \right),
\end{equation}
where $\phi^V_s(x,t)$ solves 
$$
\left\{\begin{array}{l}
\frac{d}{ds}\phi^V_s(x,t)=V\left(\phi^V_s(x,t),t+s\right)\\ \\
\phi^V_0(x,t)=x\,.
\end{array}\right.
$$

By the classical Cauchy-Lipschitz theory such a flow is well defined whenever $V\in C^0_t(\Lip_x)$. 
We are now ready to introduce the following definition

\begin{definition}[Lagrangian time stable Minkowski dimension]\label{Lagr_stab_Min_dim}
Let $\Omega$ be $\R^d$ or $\T^d$. Let $S\subset \Omega\times (0,T)$ and $v\in L^q((0,T);L^p(\Omega))$ be a coarse incompressible vector field, for some $p,q\in[2,\infty]$. We will say that $S$ has Lagrangian time stable Minkowski dimension  at most $\gamma\in [0,d]$, with instability parameters $\beta_1,\beta_2>0$ relative to $v$, if (there exist implicit constants such that) for all sufficiently small $\delta>0$ there exists an incompressible vector field $V^\delta$ of uniformly Lipschitz class $V^\delta \in C^0([0,T];\Lip(\Omega))$ such that 
\begin{equation}\label{L_stab_beta1}
\left\|v-V^\delta\right\|_{L^q_t(L^p_x)}\lesssim \delta^{\beta_1},
\end{equation}
and, by defining the $\delta,\tau-$Lagrangian tubular neighbourhood 
\begin{equation}
\label{Lagran_neigh}
\mathcal{L}^{V^\delta}(S)_{\delta,\tau}:=\left\{ \Phi_s^{V^\delta}\left(x+ h,t\right)\, : \, (x,t)\in S,\, |s|< \tau , \, |h| < \delta \right\}\,,
\end{equation} 
it holds 
\begin{equation}
\label{L_dim_gamma}
 \mathcal{H}^{d+1} \left(M\cap \mathcal{L}^{V^\delta}(S)_{\delta,\tau} \right)\lesssim \delta^{d-\gamma}\quad \text{if }\, \tau=\delta^{\beta_2}\,,
\end{equation}
for every $\delta>0$ small enough and every bounded set $M\subset \Omega\times (0,T)$, where the implicit constant in \eqref{L_dim_gamma} might depend on $d$, $\beta_1$, $\beta_2$, $\gamma$ and $M$,  but it is otherwise independent of $\delta$.
\end{definition}

As for Definition \ref{d:Eul_Min_dim}, also in this case there is  a hierarchy of (in)stabilities. Indeed, if a set has Lagrangian time stable Minkowski dimension at most $\gamma$ with instability parameters $\beta_1,\beta_2$, then it is also the case for every other couple of parameters $\beta_1'\leq  \beta_1$ and $\beta_2'\geq \beta_2$.  Also, the Lagrangian dimension we gave can be equivalently computed by counting cylinders: To cover the set $S$ we need at most $\tau^{-1}\delta^{-\gamma}$ Lagrangian cylinders $\big\{\mathcal{L}_i^{\delta,\tau}\big\}_i$, generated by a Lipschitz incompressible vector field $V^\delta$ which is $\delta^{\beta_1}$ close to $v$ in the $L^q_t(L^p_x)$ topology, with spatial radius $\delta$ and time length $\tau=\delta^{\beta_2}$. Notice that each such cylinder has $\mathcal{H}^{d+1}\big(\mathcal{L}_i^{\delta,\tau}\big)=\tau\delta^d$.

Let us emphasize that there will always be at least one family $V^\delta$ satisfying the stability requirement \eqref{L_stab_beta1} measured by the parameter $\beta_1$ for the class of velocity fields $v\in L^p_t(B^\theta_{p,\infty})$, with $p\geq 3$, we will consider.  Indeed, one can simply choose $V^\delta=v*\rho_\delta$ to be the spatial mollification of $v$ at scale $\delta$, or even at a smaller length scale $\delta' < \delta$, and by standard mollification estimates (see \eqref{moll1} below) deduce $\beta_1\geq \theta$ in \eqref{L_stab_beta1}, which is actually satisfactory for our main Theorem \ref{t:main_lagr_dim} below. Moreover, since $v$ is a solution to Euler, such a $V^\delta$ solves distributionally 
$$
\partial_t (\nabla V^\delta)=-\nabla\left(\div(v\otimes v)+\nabla p \right)* \rho_\delta
$$
from which, since $p\geq 3$, one deduces that $\partial_t \nabla V^\delta\in L^{\sfrac{p}{2}}_t(L^\infty_x)$ and consequently 
$$
V^\delta\in W^{1,\sfrac{p}{2}}_t(\Lip_x)\subset C^0_t(\Lip_x).
$$
Definition \ref{Lagr_stab_Min_dim}, however, has the desirable feature that it depends only on $v$ and $S$, and does not depend on a choice of mollifying kernel, while the additional stability parameter has the advantage of allowing for weaker or stronger assumptions on $v$.

Definition \ref{Lagr_stab_Min_dim} accords well with the Taylor hypothesis of \emph{frozen turbulence}, which claims that fine structures of a turbulent flow (in this case singularities or more generally points in the dissipation support) are advected by an averaged version of the velocity.  
Indeed, we use Lagrangian neighborhoods and we consider regularizations of the velocity field including mollifications.

In terms of this notion of Lagrangian--type Minkowski dimension we have the following theorem, which represents the precise counterpart of the rough Theorem \ref{t:rough} stated above.

\begin{theorem}[Lagrangian intermittency]
\label{t:main_lagr_dim}
Let $\Omega$ be $\R^d$ or $\T^d$. Let $p\in [3,\infty]$, $\theta\in (0,1)$, $\gamma\in [0,d]$ and $v\in L^p((0,T);B^\theta_{p,\infty}(\Omega))$ be a weak solution of Euler.  Assume that the dissipation set $S\subset \Omega\times (0,T)$ has time stable Lagrangian Minkowski dimension at most $\gamma$, with instability parameters $\beta_1=\theta$ and $\beta_2=1-\theta+\sfrac{(d-\gamma)}{p}$ relative to $v$ (according to Definition \ref{Lagr_stab_Min_dim}). Then $D^v\equiv 0$ if
$$
\theta>\frac{1}{3}-(d-\gamma)\frac{p-3}{3p}.
$$
\end{theorem}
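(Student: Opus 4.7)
The plan is a local version of the Constantin--E--Titi argument in which the distributional Duchon--Robert identity \eqref{Local_energy} is tested against a cut--off that exploits the Lagrangian structure of $S$. Fix $\varphi\in C^\infty_c(\Omega\times(0,T))$ and let $V^\delta$ be the Lipschitz incompressible approximation given by Definition \ref{Lagr_stab_Min_dim} with $\beta_1=\theta$, so that $\|v-V^\delta\|_{L^p_tL^p_x}\lesssim\delta^\theta$. Construct a smooth cut--off $\chi^\delta:\Omega\times(0,T)\to[0,1]$ equal to $1$ on a Lagrangian $\delta/2$--neighbourhood of $S\cap\spt\varphi$ and supported in $\mathcal{L}^{V^\delta}(S)_{\delta,\delta^{\beta_2}}$, by transporting a spatial bump of radius $\delta$ along the flow of $V^\delta$. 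This yields a cut--off that is \emph{constant along the flow},
\begin{equation*}
\partial_t\chi^\delta+V^\delta\cdot\nabla\chi^\delta=0,\qquad \|\nabla\chi^\delta\|_{L^\infty}\lesssim\delta^{-1},\qquad \|\chi^\delta\|_{L^{p/(p-3)}_{x,t}}\lesssim\delta^{(d-\gamma)(p-3)/p},
\end{equation*}
the last inequality coming from the Lagrangian dimension bound \eqref{L_dim_gamma}. Since $\spt D^v\subset S$, one has $\langle D^v,\varphi\rangle=\langle D^v,\varphi\chi^\delta\rangle$ for every such $\delta$, and it is enough to prove $\langle D^v,\varphi\chi^\delta\rangle\to 0$ as $\delta\to 0$.

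Mollify the Euler equation at scale $\eps=\delta$ to obtain the mollified local energy balance
\begin{equation*}
\partial_t\tfrac{|v_\delta|^2}{2}+\div F_\delta=-R_\delta:\nabla v_\delta,\qquad F_\delta=\Big(\tfrac{|v_\delta|^2}{2}+p_\delta\Big)v_\delta-R_\delta v_\delta,
\end{equation*}
with $R_\delta=v_\delta\otimes v_\delta-(v\otimes v)_\delta$. The \emph{principal estimate} comes from the Constantin--E--Titi bounds $\|R_\delta\|_{L^{p/2}_{x,t}}\lesssim\delta^{2\theta}$ and $\|\nabla v_\delta\|_{L^p_{x,t}}\lesssim\delta^{\theta-1}$, which yield by H\"older
\begin{equation*}
\Big|\int R_\delta:\nabla v_\delta\,\varphi\chi^\delta\,dxdt\Big|\lesssim\delta^{3\theta-1+(d-\gamma)(p-3)/p},
\end{equation*}
vanishing as $\delta\to 0$ precisely under the hypothesis $\theta>\tfrac{1}{3}-(d-\gamma)\tfrac{p-3}{3p}$. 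The task is to connect this bound to $\langle D^v,\varphi\chi^\delta\rangle$ with an error of the same order, which is done by testing the mollified identity against $\varphi\chi^\delta$, integrating by parts, and using the flow--invariance of $\chi^\delta$ to collapse the dangerous derivatives $\partial_t\chi^\delta$ and $\nabla\chi^\delta$ into a single integrand against $\nabla\chi^\delta$:
\begin{equation*}
II_\delta:=\int\varphi\,\nabla\chi^\delta\cdot\Big[\tfrac{|v_\delta|^2}{2}(v_\delta-V^\delta)+p_\delta v_\delta-R_\delta v_\delta\Big].
\end{equation*}

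The Reynolds contribution to $II_\delta$ is routine via $\|R_\delta\|_{L^{p/2}}\lesssim\delta^{2\theta}$. The \textbf{main obstacle} is the cubic kinetic term $\tfrac{|v_\delta|^2}{2}(v_\delta-V^\delta)\cdot\nabla\chi^\delta$, for which the naive estimate $\|v_\delta-V^\delta\|_{L^p}\lesssim\delta^\theta$ produces only $\delta^{\theta-1+(d-\gamma)(p-3)/p}$ and would demand the vastly more restrictive $\theta>1-(d-\gamma)(p-3)/p$. To recover the sharp exponent, expand the triple product via a \emph{third order Constantin--E--Titi type commutator} $|v_\delta|^2v_\delta=(|v|^2v)_\delta-T_\delta^{(3)}$ with $\|T_\delta^{(3)}\|_{L^{p/3}}\lesssim\delta^{3\theta}$, extracting the missing factor $\delta^{2\theta}$. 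A parallel bilinear argument combined with the \emph{double regularity} of the pressure $p\in L^{p/2}_tB^{2\theta}_{p/2,\infty}$, inherited from $-\Delta p=\div\div(v\otimes v)$, disposes of the pressure contribution $p_\delta v_\delta\cdot\nabla\chi^\delta$. The time--thickness parameter $\beta_2=1-\theta+(d-\gamma)/p$ is calibrated exactly so that the transport losses incurred in the time--instability of these commutators remain dominated by the principal bound $\delta^{3\theta-1+(d-\gamma)(p-3)/p}$. Summing all contributions and letting $\delta\to 0$ yields $\langle D^v,\varphi\rangle=0$ for every test $\varphi$, and hence $D^v\equiv 0$.
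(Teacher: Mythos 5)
There are two genuine gaps, and they are linked. First, the cut--off you posit cannot be built: a function satisfying $\partial_t\chi^\delta+V^\delta\cdot\nabla\chi^\delta=0$ exactly is constant along complete flow lines of $V^\delta$, so the set where it equals $1$ (and hence its support) is a union of entire trajectories over $(0,T)$; it cannot be confined to the short--time Lagrangian neighbourhood $\mathcal{L}^{V^\delta}(S)_{\delta,\delta^{\beta_2}}$ unless $S$ is itself essentially invariant under the flow of $V^\delta$, which is not assumed. Consequently the volume bound \eqref{L_dim_gamma} does not apply to the support of your $\chi^\delta$, and the estimate $\|\chi^\delta\|_{L^{p/(p-3)}_{x,t}}\lesssim\delta^{(d-\gamma)(p-3)/p}$ is unjustified. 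This is exactly why the paper instead \emph{mollifies along the flow} over a time window $\tau=\delta^{\beta_2}$: the resulting cut--off has controlled support but a \emph{nonzero} advective derivative, $D_{t,V^\delta}\chi_{\delta,\tau}=-\int\tilde\chi_{\delta,\tau}(\Phi^{V^\delta}_s)\,\eta'_\tau(s)\,ds$, of size $\tau^{-1}\delta^{(d-\gamma)/q}$ in $L^q_{x,t}$. That is the only place where $\beta_2=1-\theta+(d-\gamma)/p$ enters the final exponent; in your argument $\beta_2$ plays no role at all, which is a symptom of the problem.

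Second, even granting the cut--off, your error terms do not close at the claimed threshold. The bound $\|(|v|^2v)_\delta-|v_\delta|^2v_\delta\|_{L^{p/3}}\lesssim\delta^{3\theta}$ is false: by Lemma \ref{l:higher_average} this difference equals $K^v_\delta-2R_\delta v_\delta-v_\delta\tr R_\delta$, and the last two terms carry an undifferenced factor of $v_\delta$ and are only $O(\delta^{2\theta})$. Likewise your ``routine'' Reynolds contribution $R_\delta v_\delta\cdot\nabla\chi^\delta$ is of size $\delta^{2\theta-1+(d-\gamma)(p-3)/p}$, which vanishes only for $\theta>\tfrac12-(d-\gamma)\tfrac{p-3}{2p}$, strictly stronger than the theorem's hypothesis whenever $(d-\gamma)\tfrac{p-3}{p}<1$; and the term $p_\delta v_\delta\cdot\nabla\chi^\delta$ as written is $O(\delta^{-1+(d-\gamma)(p-3)/p})$ --- only the commutator $(pv)_\delta-p_\delta v_\delta$ is small, via the double pressure regularity. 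The missing idea is the cancellation that turns all the $O(\delta^{2\theta})$ pieces into a single advective derivative: $\div(R_\eps v_\eps)+\tfrac12\div\bigl((|v|^2v)_\eps-|v_\eps|^2v_\eps\bigr)-\tfrac12\partial_t\tr R_\eps=-\tfrac12 D_{t,v_\eps}\tr R_\eps+\tfrac12\div K^v_\eps$. One then replaces $D_{t,v_\eps}$ by $D_{t,V^\delta}$ at the cost of $(v_\eps-V^\delta)\cdot\nabla\tr R_\eps=O\bigl((\eps^\theta+\delta^\theta)\eps^{2\theta-1}\bigr)$ (note the difference multiplies $\nabla\tr R_\eps$, not $|v_\delta|^2\nabla\chi^\delta$), and integrates the remaining advective derivative by parts onto the flow--mollified cut--off, where the $\tau^{-1}$ bound produces $\eps^{2\theta}\tau^{-1}\delta^{(d-\gamma)(p-2)/p}$. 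Only this combination, with $\delta=\eps$ and $\tau=\delta^{1-\theta+(d-\gamma)/p}$, recovers the sharp rate $\eps^{3\theta-1+(d-\gamma)(p-3)/p}$.
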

Note that, by reading the previous theorem in terms of structure function exponents (remember $\zeta_p=\theta p$), it gives the linear upper bound \eqref{bound_betamodel} when $d=3$, which is then precisely consistent with
the $\beta-$model, for solutions of Euler that support nontrivial dissipation, modulo of course the notion of dimension we used.  Indeed our theorem implies that, assuming dissipation to be $\gamma$ dimensional in the above sense,  the $\beta-$model prediction is the largest possible choice of $\zeta_p$ which is consistent with anomalous dissipation.  Note also that in case of full codimension, that is $\gamma =0$, the $\beta-$model prediction coincides with the Sobolev embedding $B^{\sfrac{1}{3} }_{3,\infty}\subset L^{\frac{3d}{d-1}-\eps}$: If $\gamma =0$ the  $\beta-$model gives $3\zeta_p= p(1-d)+3d$, which intersects the $p$ axes exactly in $p^*=\frac{3d}{d-1}$.  In other words, assuming $B^{\sfrac{1}{3} }_{3,\infty}$ to be exact in view of the $4/5-$ths law,  velocities with dissipation of full codimension will always have finite $p$ moments, for all $p<p^*$. As for the Eulerian theorem, also Theorem \ref{t:main_lagr_dim} can be generalised to $v\in L^q_t(B^\theta_{p,\infty})$, for any (possibly different) $p,q\geq 3$. See Remark \ref{r:lagrangian_L3_in_time}.  The choice of $\beta_2$ in the above theorem is the one giving the optimal conclusion, but for any given $\beta_2\geq 0$ our proof still gives a nontrivial quantitative relation between $\theta$ and $\gamma$,  with explicit dependence on $\beta_2$. In terms of the Taylor hypothesis, it may be reasonable to assume that $\beta_2 = 0$, since the vector field $V^\delta$, which is purported to carry the singularities of the flow, is allowed to be arbitrarily close to the solution $v$. The dependence of $\beta_2$ on the codimension $d-\gamma$ might, at least heuristically,  be coherent with the following fact: The higher the codimension, larger the portion of space in which the flow is “regular", and thus the more time stability of singularities is plausible.

The proof of the two above Eulerian and Lagrangian type of spatial intermittency theorems will be given in Section \ref{s:spatial_intermitt}. Moreover, by putting together the two theorems from above, one can sort of merge the two notions of dimension and give the following, which can be thought as an intermediate definition between Definition \ref{d:Eul_Min_dim} and Definition \ref{Lagr_stab_Min_dim}.

\begin{definition}[Eulerian--Lagrangian time stability]\label{L_stab_Min_dim}
Let $\Omega$ be $\R^d$ or $\T^d$. Let $S\subset \Omega\times (0,T)$ and $v\in L^q((0,T);L^p(\Omega))$ be an incompressible vector field, for some $p,q\in[1,\infty]$. We will say that $S$ is Eulerian--Lagrangian stable with parameters $\beta_1,\beta_2>0$ relative to $v$ if (there exist implicit constants such that) for all sufficiently small $\delta>0$ there exists a (uniformly in time) Lipschitz incompressible vector field $V^\delta \in C^0([0,T];\Lip(\Omega))$ such that 
\begin{equation}\label{stab_beta1}
\left\|v-V^\delta\right\|_{L^q_t(L^p_x)}\lesssim \delta^{\beta_1},
\end{equation}
and, by letting $\tau=\delta^{\beta_2}$, it holds 
\begin{equation}
\label{L_stab}
\Phi^{V^\delta}_s((S)_{\delta,\tau})\subset (S)_{2\delta,2\tau}
\end{equation}
for every $\delta>0$ small enough and all $|s|< \tau$.
\end{definition}

Condition~\eqref{L_stab} guarantees that the set $S$ does not move too rapidly under the flow map of $V^\delta$.   Under this condition we have that Eulerian time stability (as in Definition~\ref{d:Eul_Min_dim}) implies Lagrangian time stability (as in Definition \ref{Lagr_stab_Min_dim}), and we can apply Theorem~\ref{t:main_lagr_dim} to yield the following corollary.

\begin{corollary}[Eulerian--Lagrangian intermittency]
\label{t:main_lagr}
Let $\Omega$ be $\R^d$ or $\T^d$. Let $p\in [3,\infty]$, $\theta\in (0,1)$, $\gamma\in [0,d]$ and $v\in L^p((0,T);B^\theta_{p,\infty}(\Omega))$ be a weak solution of Euler.  Assume that the dissipation set $S\subset \Omega\times (0,T)$ has Eulerian time stable dimension at most $\gamma$, with instability parameter $\beta=1-\theta+\sfrac{(d-\gamma)}{p}$ (according to Definition \ref{d:Eul_Min_dim}) and moreover it is also Eulerian--Lagrangian stable with parameters $\beta_1=\theta$ and $\beta_2=\beta$ relative to $v$ (according to Definition \ref{L_stab_Min_dim}). Then $D^v\equiv 0$ if
$$
\theta>\frac13-(d-\gamma)\frac{p-3}{3p}.
$$
\end{corollary}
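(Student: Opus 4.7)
The plan is to deduce Corollary~\ref{t:main_lagr} directly from Theorem~\ref{t:main_lagr_dim} by showing that the combined hypotheses of Eulerian time stability (Definition~\ref{d:Eul_Min_dim}) with instability parameter $\beta = 1-\theta+(d-\gamma)/p$ and Eulerian--Lagrangian stability (Definition~\ref{L_stab_Min_dim}) with parameters $\beta_1 = \theta$, $\beta_2 = \beta$ force the dissipation set $S$ to have Lagrangian time stable Minkowski dimension at most $\gamma$ with the very same parameters $\beta_1,\beta_2$ relative to $v$. Once this reduction is carried out, Theorem~\ref{t:main_lagr_dim} applies as a black box and yields $D^v \equiv 0$ under the stated threshold $\theta > \tfrac{1}{3} - (d-\gamma)\tfrac{p-3}{3p}$.

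For the closeness requirement \eqref{L_stab_beta1} with $\beta_1 = \theta$, nothing has to be done: the approximating family $V^\delta$ supplied by the Eulerian--Lagrangian stability hypothesis already satisfies \eqref{stab_beta1} with exactly the correct exponent. The substantive step is therefore to upgrade the Eulerian neighbourhood bound on $S$ into the required measure estimate on the Lagrangian tubular neighbourhood $\mathcal{L}^{V^\delta}(S)_{\delta,\tau}$ with $\tau = \delta^{\beta_2}$.

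The key observation is that any point of $\mathcal{L}^{V^\delta}(S)_{\delta,\tau}$ is of the form $\Phi^{V^\delta}_s(x+h, t)$ with $(x,t)\in S$, $|h|<\delta$ and $|s|<\tau$, and the base point $(x+h,t)$ sits in $(S)_{\delta,\tau}$ via a zero time shift. The flow invariance condition \eqref{L_stab} then places its image inside $(S)_{2\delta, 2\tau}$, so that
\begin{equation*}
\mathcal{L}^{V^\delta}(S)_{\delta,\tau}\subset (S)_{2\delta, 2\tau}.
\end{equation*}
Since $\beta_2 = \beta$, one can choose a constant $C = C(\beta)$ large enough that $(S)_{2\delta, 2\tau}\subset (S)_{C\delta, (C\delta)^{\beta}}$, and the Eulerian time stability hypothesis \eqref{E_Min_dim_gamma_def}, applied with the rescaled radius $C\delta$ and the same bounded set $M$, yields
\begin{equation*}
\mathcal{H}^{d+1}\bigl(\mathcal{L}^{V^\delta}(S)_{\delta,\tau}\cap M\bigr)\leq \mathcal{H}^{d+1}\bigl((S)_{C\delta,(C\delta)^\beta}\cap M\bigr)\lesssim (C\delta)^{d-\gamma}\lesssim \delta^{d-\gamma},
\end{equation*}
which is \eqref{L_dim_gamma}.

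There is no genuine obstacle in the argument; the delicate point is just the bookkeeping that ensures the two instability exponents on both sides match so that the rescaling constant $C(\beta)$ can be absorbed into the implicit constant. Conceptually, the role of \eqref{L_stab} is precisely to let the Eulerian neighbourhood of $S$ absorb its Lagrangian counterpart without any loss in the dimension exponent, so that the stronger Lagrangian machinery built into Theorem~\ref{t:main_lagr_dim} becomes applicable on purely Eulerian plus flow--stability data.
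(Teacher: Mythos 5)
Your proposal is correct and follows exactly the route the paper intends: the Eulerian--Lagrangian stability condition \eqref{L_stab} gives the inclusion $\mathcal{L}^{V^\delta}(S)_{\delta,\tau}\subset (S)_{2\delta,2\tau}$, which together with the Eulerian dimension bound (applied at radius $C\delta$ with $C\geq\max(2,2^{1/\beta})$) upgrades the hypotheses to Lagrangian time stable Minkowski dimension at most $\gamma$ with the stated parameters, so Theorem~\ref{t:main_lagr_dim} applies as a black box. The bookkeeping with the rescaling constant and the verification that the $V^\delta$ family already supplies \eqref{L_stab_beta1} are both handled correctly.
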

The main point in stating the corollary is to demonstrate that the sharp (in the sense of the $\beta-$model) upper bound \eqref{bound_betamodel} can still be derived if one couples the more standard Eulerian notion of Minkowski dimension from Definition \ref{d:Eul_Min_dim} (in which the neighbourhoods are the most common tubular ones) with the stability requirement \eqref{L_stab}, which basically asks that the dissipation set is not moving too rapidly in time. However, although the corollary is more elementary, in virtue of the Taylor hypothesis one may view Theorem \ref{t:main_lagr_dim} as the main result (that is probably also the most physically relevant) of this paper.

We conclude by the following remark.

\begin{remark}[Vortex Sheets] \label{r:vortexsheets}
When $v$ is a Vortex Sheet it is clear that the instability parameter $\beta$ in Corollary \ref{t:main_lagr} can be taken to be $\beta=1$ (recall that Vortex Sheets belong to $L^\infty_{x,t}\cap L^\infty_t(BV_x)\subset L^\infty_t(B^{\sfrac{1}{p}}_{p,\infty})$ and thus $\theta\geq \sfrac{1}{p}$ for all  values of $p$). Moreover, by choosing $V^\delta=v*\rho_\delta$, the requirement \eqref{L_stab} trivially holds with $\tau\leq \delta$, since in a time interval of length $\sim\tau$ points move at most by $\lesssim \tau$ when subject to a flow of an $L^\infty_{x,t}$ vector field. Thus Corollary \ref{t:main_lagr} also shows that Vortex Sheets lie in the critical (local) energy conservative class for a set that has Eulerian stable Minkowski dimension at most $d-1$ with instability parameter $\beta=1$ (according to Definition \ref{d:Eul_Min_dim}),  or analogously, in virtue of Remark \ref{r:Eul=Minkow}, a space--time set whose standard Minkowski dimension is at most $d$.   We believe this statement to represent a good complement to the energy conservation proved in \cite{Sv09}.
\end{remark}

\subsection{Intermittency in the vanishing viscosity limit}\label{s:interm_vanish_visc}

Here we give the precise statements for vanishing viscosity sequences whose dissipation accumulates on lower--dimensional sets. To this end, for any viscosity $\nu>0$, we let $v^\nu$ to be  a smooth solution to the incompressible Navier-Stokes system \eqref{NS} on $\Omega\times (0,T)$.

Since we wish to deduce properties of the limit $v^\nu\rightarrow v$ (as $\nu\rightarrow 0$) by means of our inviscid intermittency results stated in the section above, we need to rephrase the anomalous dissipation definition \eqref{0th_law} as an interior dissipation condition
\begin{equation}
    \label{0th_law_new}
       \liminf_{\nu\rightarrow 0}\nu\int_\delta^{T-\delta}\int_{B_R}|\nabla v^\nu|^2\,dxdt>0, \qquad \text{for some } \delta,R>0.
\end{equation}
This latter technical reformulation of the $0-$th law ensures that not all the dissipation is being pushed towards the time boundary\footnote{Note that in the recent constructions \cite{CCS22,BD22} the dissipation, in the limit, pushes all the mass towards $T$.}, or is escaping to infinity when $\Omega=\R^d$, which seems to be a physically reasonable assumption. The use of this assumption in our context is that both Theorems \ref{t:main} and \ref{t:main_lagr_dim} conclude intermittency by means of $D^v\not\equiv 0$, and $D^v$, being a distribution, does not see masses at boundaries or at infinity.  Note that, when $\Omega=\T^d$, condition \eqref{0th_law_new} is equivalent to the usual 0--th law \eqref{0th_law} as soon as $\{v^\nu\}_{\nu>0}$ is a compact  $C^0_t (L_x^2)$ sequence (as can be seen from \eqref{leray_energy_equality} below),
and compactness in $C^0_t (L_x^2)$ itself follows from a uniform bound in $L_t^\infty (B_{2,\infty}^\theta)$, for some $\theta >0$, by the Aubin-Lions-Simon Lemma.

In what follows we denote by $\mathcal{M}^+(\Omega\times [0,T])$ the space of finite positive space--times measures, that is the dual of $C_b^0(\Omega\times [0,T])$, where the pedex $b$ stands for \emph{bounded}. Thus we will adopt the usual notion of convergence in measure by duality with continuous functions. We start by stating the first result, which uses the Eulerian time stable Minkowski dimension of Definition \ref{d:Eul_Min_dim} with $\beta=1$, which then is equivalent to the standard space--time Minkowski dimension $\gamma+1$ in virtue of Remark \ref{r:Eul=Minkow}.

\begin{theorem}[Eulerian vanishing viscosity intermittency]\label{t:eul_vanish_visc}
    Let $\Omega$ be $\T^d$ or $\R^d$. Let $\{v^\nu\}_{\nu>0}$ be a sequence of smooth solutions to \eqref{NS}  enjoying \eqref{0th_law_new}, such that  $\nu |\nabla v^\nu|^2\rightharpoonup\mu$ in $\mathcal{M}^+(\Omega\times [0,T])$. Suppose that the set $S:= \spt_{x,t} \mu$ it has space--time Minkowski dimension at most $\gamma+1$. Then, for $p\in[3,\infty]$, by letting $\theta\in(0,1)$ be any positive number such that 
$$
\frac{2\theta}{1-\theta}>1-\frac{p-3}{p}(d-\gamma),
$$
it must hold that
\begin{equation}
    \label{inter_norm_blowup_euler}
\lim_{\nu\rightarrow 0} \norm{v^{\nu}}_{L^p_t(B^{\theta}_{p,\infty})}=+\infty.
\end{equation}
\end{theorem}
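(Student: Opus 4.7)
The plan is to argue by contradiction, reducing the vanishing viscosity statement to an application of the inviscid Theorem~\ref{t:main}. Suppose \eqref{inter_norm_blowup_euler} fails; then along some subsequence (still denoted $v^\nu$) the norms $\|v^\nu\|_{L^p_t(B^\theta_{p,\infty})}$ stay uniformly bounded. The goal is to extract a limit $v$ that is a weak Euler solution in $L^p_t(B^\theta_{p,\infty})$ whose Duchon--Robert distribution $D^v$ coincides (as a measure) with the prescribed $\mu$. Once this is achieved, $\spt_{x,t} D^v = S$ has space--time Minkowski dimension at most $\gamma+1$, which by Remark~\ref{r:Eul=Minkow} means that $S$ has Eulerian time stable Minkowski dimension at most $\gamma$ with instability parameter $\beta = 1$. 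The hypothesis $\sfrac{2\theta}{(1-\theta)} > 1 - \sfrac{(p-3)(d-\gamma)}{p}$ is exactly \eqref{lowerbound_theta}, so Theorem~\ref{t:main} forces $D^v\equiv 0$, which will contradict the interior $0$-th law \eqref{0th_law_new}.

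The first technical step is to obtain strong compactness of $\{v^\nu\}$ in $L^3_{loc}(\Omega\times(0,T))$, which is needed both for passing to the limit in the nonlinearity and, crucially, for invoking the Duchon--Robert identity \eqref{DR_equals_diss}. The uniform $L^p_t(B^\theta_{p,\infty})$ bound gives equicontinuity in space. For equicontinuity in time I would use \eqref{NS}: schematically $\partial_t v^\nu = -\mathbb{P}\,\div(v^\nu\otimes v^\nu) + \nu \Delta v^\nu$. The first term is uniformly bounded in $L^{p/2}_t$ with values in a negative Sobolev space (interpolating the $L^p_{x,t}$ bound on $v^\nu\otimes v^\nu$ with a derivative), while the dissipative term tends to zero in $L^2_t H^{-1}_{loc}$ because the weak convergence $\nu|\nabla v^\nu|^2\rightharpoonup \mu$ yields a local uniform bound on $\nu^{\sfrac{1}{2}}\nabla v^\nu$ in $L^2_{x,t}$, hence $\nu \Delta v^\nu = \nu^{\sfrac{1}{2}}\,\div(\nu^{\sfrac{1}{2}}\nabla v^\nu)\to 0$. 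Aubin--Lions then yields strong convergence $v^\nu\to v$ in $L^3_{loc}$ along a further subsequence, and the standard passage to the limit identifies $v$ as a weak Euler solution with $v\in L^p_t(B^\theta_{p,\infty})$ by weak--$*$ lower semicontinuity of the Besov norm.

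The next step is to identify the limit measure. Since $v^\nu$ are smooth (in particular dissipative) Navier--Stokes solutions and $v^\nu\to v$ strongly in $L^3_{loc}$, the convergence principle \eqref{DR_equals_diss} applies and gives $\nu|\nabla v^\nu|^2 \to D^v$ in $\mathcal{D}'$. By uniqueness of distributional limits this forces $D^v = \mu$, so in particular $\spt_{x,t}D^v\subseteq S$. The interior $0$-th law \eqref{0th_law_new} then tests against a cutoff supported in $(\delta,T-\delta)\times B_R$ to yield $\mu\big((\delta,T-\delta)\times B_R\big)>0$, hence $D^v \not\equiv 0$. This is the contradiction with Theorem~\ref{t:main} announced above.

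The main obstacle I expect is the compactness step, specifically ensuring that $v^\nu\to v$ in $L^3_{loc}$ uniformly enough in the viscous parameter. The subtlety is that we only have a bound on $v^\nu$ in $L^p_t(B^\theta_{p,\infty})$, not in $L^\infty_t(L^2_x)$, so the localization needs to be done carefully (in particular on $\R^d$, using the local nature of the Aubin--Lions lemma). Once this is in place, the rest is essentially bookkeeping: matching the Besov index, the Eulerian Minkowski dimension and the instability parameter with the hypotheses of Theorem~\ref{t:main}.
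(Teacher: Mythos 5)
Your proposal is correct and follows essentially the same route as the paper's proof: contradiction, Aubin--Lions--Simon compactness from the uniform $L^p_t(B^\theta_{p,\infty})$ bound, identification $D^v=\mu$ via \eqref{DR_equals_diss}, nontriviality of $D^v$ from \eqref{0th_law_new}, and the final contradiction with Theorem~\ref{t:main}. The only difference is that you spell out the time--equicontinuity and the vanishing of the viscous term (details the paper leaves implicit in its one-line invocation of Aubin--Lions), and you aim for $L^3_{loc}$ rather than $L^p_{x,t}$ convergence, either of which suffices.
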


The assumption $\nu |\nabla v^\nu|^2\rightharpoonup \mu$ as measures is not restrictive since, as soon as the initial data $v_0^\nu$ belong to a compact subset of $L^2(\Omega)$, then the global energy balance 
\begin{equation}\label{leray_energy_equality}
\frac12 \int_{\Omega}|v^\nu(x,t)|^2\,dx+\nu \int_0^t \int_{\Omega}|\nabla v^\nu(x,s)|^2\,dxds=\frac12 \int_{\Omega}|v_0^\nu(x)|^2\,dx,\quad \forall t\geq 0,
\end{equation}
 implies that the sequence $\{\nu|\nabla v^\nu|^2\}_{\nu>0}$ is bounded in $L^1(\Omega\times (0,T))$, and thus it converges to a non-negative and finite measure $\mu$, up to taking a subsequence. Thus the only true assumption in the previous theorem is the existence of a lower--dimensional accumulation point (in the space of non-negative finite measures) in the sequence of dissipations.
The previous global energy equality is obtained by integrating in space, and then in time, its (stronger) local version 
$$
\partial_t\left( \frac{|v^\nu|^2}{2}\right)+\div \left(\left(\frac{|v^\nu|^2}{2}+p^\nu \right) v^\nu \right)-\nu\Delta\left(\frac{|v^\nu|^2}{2}\right)+\nu |\nabla v^\nu|^2=0,
$$
which, for smooth solutions $v^\nu$, is obtained as usual by scalar multiply the first equation in \eqref{NS} by $v^\nu$ itself and using Leibniz rule, together with the incompressibility $\div v^\nu=0$. We emphasize that the assumption \eqref{0th_law_new} implies that at least a portion of the support of $\mu$ is strictly inside $\Omega\times(0,T)$.

The Lagrangian version of the previous theorem, which is indeed the rigorous counterpart of Theorem \ref{t:rough_visc}, reads as follows. 

\begin{theorem}[Lagrangian vanishing viscosity intermittency]\label{t:lagr_vanish_visc}
    Let $\Omega$ be $\T^d$ or $\R^d$. Let $\{v^\nu\}_{\nu>0}$ be a sequence of smooth solutions to \eqref{NS}  enjoying \eqref{0th_law_new}, such that  $v^\nu \rightarrow v$ in $L^2(\Omega\times (0,T))$ and $\nu |\nabla v^\nu|^2\rightharpoonup \mu$ in $\mathcal{M}^+(\Omega\times [0,T])$ in the inviscid limit. Let $\theta\in(0,1)$, $p\in[3,\infty]$ and assume $v\in L^p(\Omega\times (0,T))$ and that the set $S:=\spt_{x,t} \mu$ has time stable Lagrangian Minkowski dimension at most $\gamma$, with instability parameters $\beta_1=\theta$ and $\beta_2=1-\theta+\sfrac{(d-\gamma)}{p}$ relative to $v$ (according to Definition \ref{Lagr_stab_Min_dim}). Then, if $
\theta>\sfrac13-(d-\gamma)\sfrac{(p-3)}{3p}$,
it must hold that
\begin{equation}
    \label{inter_norm_blowup_lagr}
\lim_{\nu\rightarrow 0} \norm{v^{\nu}}_{L^p_t(B^{\theta}_{p,\infty})}=+\infty.
\end{equation}
\end{theorem}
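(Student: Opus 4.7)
The plan is to argue by contradiction, reducing to the inviscid intermittency result of Theorem~\ref{t:main_lagr_dim}. Suppose the conclusion fails, so there is a subsequence (not relabeled) along which $\|v^\nu\|_{L^p_t(B^\theta_{p,\infty})} \leq M < +\infty$. I would first show that the (already given) limit $v$ belongs to $L^p_t(B^\theta_{p,\infty})$. This is essentially a lower-semicontinuity statement: the strong $L^2_{x,t}$ convergence $v^\nu\to v$ passes to a.e.\ pointwise convergence along a further subsequence, and Fatou applied to the difference quotients $|h|^{-\theta}\|v^\nu(\cdot+h,t)-v^\nu(\cdot,t)\|_{L^p_x}$ in both the spatial translation parameter $h$ and the time variable $t$ yields $\|v\|_{L^p_t(B^\theta_{p,\infty})}\leq M$.

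Next I would verify that $v$ is a weak solution of the Euler equations by passing to the limit in \eqref{NS}: the viscous term $\nu\Delta v^\nu$ vanishes distributionally because $\nu\to 0$ while $v^\nu$ is bounded in $L^2_{x,t}$; the nonlinearity $\div(v^\nu\otimes v^\nu)$ converges to $\div(v\otimes v)$ thanks to the strong $L^2_{x,t}$ convergence together with the uniform $L^\infty_t(L^2_x)$ bound from \eqref{leray_energy_equality}; and the pressure is recovered from the divergence equation. Since $p\geq 3$, the embedding $B^\theta_{p,\infty}\hookrightarrow L^p_x$ combined with interpolation against the strong $L^2_{x,t}$ convergence upgrades convergence to strong $L^3_{x,t}$ (locally if $\Omega=\R^d$). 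This is exactly the hypothesis required by \eqref{DR_equals_diss}, so one deduces $\nu|\nabla v^\nu|^2 \to D^v$ in $\mathcal{D}'(\Omega\times (0,T))$. Comparing with the assumed measure convergence $\nu|\nabla v^\nu|^2 \rightharpoonup \mu$ in $\mathcal{M}^+$ gives $D^v = \mu$ as distributions on $\Omega\times (0,T)$, and hence $S = \spt_{x,t}\mu = \spt_{x,t} D^v$. The 0-th law assumption \eqref{0th_law_new} then guarantees that this common object is nontrivial in the interior.

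With this identification in hand, the dimensional hypothesis on $S$ is precisely the assumption of Theorem~\ref{t:main_lagr_dim} with instability parameters $\beta_1=\theta$ and $\beta_2=1-\theta+(d-\gamma)/p$, and $\theta$ satisfies the required inequality $\theta>\sfrac{1}{3}-(d-\gamma)\sfrac{(p-3)}{3p}$. That theorem therefore forces $D^v\equiv 0$, contradicting the nontriviality just established. Since the argument applies to every subsequence along which the Besov norm stays bounded, one concludes $\lim_{\nu\to 0}\|v^\nu\|_{L^p_t(B^\theta_{p,\infty})}=+\infty$.

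The main obstacle I anticipate is the identification $D^v=\mu$: this rests on the upgrade from $L^2_{x,t}$ to $L^3_{x,t}$ strong convergence, which is where the assumption $p\geq 3$ is used non-trivially and where the uniform Besov bound enters in an essential way. Once this identification is in place, the rest of the argument is a soft compactness and lower-semicontinuity step that reduces matters cleanly to the inviscid Theorem~\ref{t:main_lagr_dim}.
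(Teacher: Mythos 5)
Your proposal follows the same route as the paper: argue by contradiction, extract a subsequence with a uniform $L^p_t(B^\theta_{p,\infty})$ bound, upgrade the $L^2_{x,t}$ convergence to strong $L^3_{x,t}$ convergence so that \eqref{DR_equals_diss} identifies $D^v=\mu$, invoke \eqref{0th_law_new} for nontriviality, and contradict Theorem~\ref{t:main_lagr_dim}. The one place where your mechanism differs from the paper's, and where it is slightly too weak, is the convergence upgrade: interpolating the strong $L^2_{x,t}$ convergence against the uniform $L^p_{x,t}$ bound only yields strong convergence in $L^q_{x,t}$ for $q<p$, so at the endpoint $p=3$ (which the theorem allows) this does not deliver the strong $L^3_{x,t}$ convergence that \eqref{DR_equals_diss} requires. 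The paper instead applies the Aubin--Lions--Simon lemma, exploiting the uniform \emph{positive} spatial Besov regularity $\theta>0$ together with the time equicontinuity supplied by the equation itself; this gives strong $L^p_{x,t}$ (hence $L^3_{x,t}$) compactness uniformly over the whole range $p\in[3,\infty]$. For $p>3$ your interpolation shortcut is perfectly fine and arguably more elementary; to cover $p=3$ you should replace it by (or supplement it with) the compactness argument. The remaining steps --- lower semicontinuity of the Besov norm, passage to the limit in the equation, and the identification of the supports --- are correct and match the paper's proof.
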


Note that, differently from Theorem \ref{t:eul_vanish_visc}, in Theorem \ref{t:lagr_vanish_visc} we had to assume $L^2_{x,t}$ compactness of the sequence $\{v^\nu\}_{\nu>0}$ and knowledge of a limiting velocity field.
This is indeed a drawback of the Lagrangian Minkowski notion of dimension, while the Eulerian Minkowski one,  being purely geometrical, has the nice feature  that it does not require any reference vector field in order to be defined.  However, in principle, if one knows the rate of convergence in Theorem~\ref{t:lagr_vanish_visc}, then either $v^\nu$ itself or regularizations of the $v^\nu$ provide a good candidate for the family of approximations $V^\delta$ required in Definition \ref{Lagr_stab_Min_dim}.  Similarly to Theorem \ref{t:main_lagr_dim}, the use of the Lagrangian Minkowski dimension in the previous theorem can be seen as a theoretical instance of the Taylor's hypothesis for which coarse structures of large Reynolds number incompressible flows follow an averaged velocity field.
The proofs of Theorem \ref{t:eul_vanish_visc} and Theorem \ref{t:lagr_vanish_visc} will be given in Section \ref{s:vanish_visc_proof}. Let us point out again that, for the interest of possible empirical testing, the precise assumptions in Theorem~\ref{t:lagr_vanish_visc} are designed to optimize the exponent, and that weaker conclusions can be obtained if weaker assumptions are satisfied.

\section{Notations and main tools}\label{Sec:tools}

Here we list some useful tools that will be used throughout the paper.  

\subsection{Minkowski dimension}
We start by recalling the basics on the notion of Minkowski dimension, while for a more detailed account we refer to \cite[Chapter 5]{Mat}. For any bounded set $S\subset \R^d$, $d \geq 1$, we define 
\begin{equation}\label{def:Minkowskiunif2}
\overline{\dim}_{\mathcal{M}} S := \inf \left\{ s \geq 0: \, \overline{\mathcal{M}}^{s} \left( S \right)=0  \right\} ,
\end{equation}
where $\overline{\mathcal{M}}^{ s}$ denotes the upper Minkowski content, obtained by computing the  shrinking rate (in volume) of the $\delta-$neighbourhoods $\left(S\right)_{\delta}:=\{ x \in \R^d: \, \dist(x,S)\leq \delta \}.$ More explicitly,  
\begin{equation}\label{def:Minkowskiunif}
\overline{\mathcal{M}}^{ s}\left( S \right):= \limsup_{\delta \rightarrow 0^+}   \frac{\mathcal{H}^d\left( \left(S\right)_{\delta}\right) }{\delta^{d-s}}.
\end{equation}
Clearly $\overline \dim_\mathcal{M}S\in [0,d]$. The boundedness of the set $S$ is necessary in \eqref{def:Minkowskiunif} since otherwise one could have $\mathcal{H}^d((S)_\delta)=+\infty$ for every $\delta>0$. When $S$ is unbounded, one can define the \emph{local} version of \eqref{def:Minkowskiunif2} with the usual straightforward modifications. Note that by \eqref{def:Minkowskiunif}, letting $\gamma=\overline \dim_{\mathcal{M}} S$, then if $\tilde \gamma>\gamma$ one has the asymptotic
\begin{equation}
    \label{Mink_asympt}
    \mathcal{H}^d((S)_\delta)\lesssim \delta^{d-\tilde \gamma},
\end{equation}
for all $0<\delta\ll 1$ sufficiently small and for some implicit constant which might depend on $d,\gamma,\tilde \gamma$, but it is otherwise independent on $\delta$. For simplicity, in view of \eqref{Mink_asympt} where $\tilde \gamma$ can be chosen arbitrarily close to $\gamma$, we will say that $S$ has upper Minkowski dimension at most $\gamma$ if $\mathcal{H}^d((S)_\delta)\lesssim \delta^{d-\gamma}$, thus slightly abusing terminology. In particular, that terminology has been used in Remark \ref{r:Eul=Minkow}.
\\
\\
It readily follows from the definition that this gives a (strictly) stronger notion of dimension with respect to the Hausdorff one,  that is $\underline{\dim}_{\mathcal M} S\geq \dim_{\mathcal H} S$ for any set $S$. Indeed, every dense set automatically has full upper Minkowski dimension (i.e. $\overline \dim_\mathcal{M} S=d$), while the Hausdorff one could even be $0$ (see \cite{Mat} for examples).  Thus, in terms of dimensional lower bounds and in view of possible accumulation of energy dissipation on dense sets, it might be desirable to relax the notion of dimension used in this work to Hausdorff. We refer to \cite{DDI23} for a recent result relating $\dim_{\mathcal H}$ of the dissipation set to the $L^p$ integrability of quite general fluid models by means of measure theoretic tools,  thus quite different than the approach we used here. An equivalent way of formulating the upper Minkowski dimension is in terms of \emph{counting boxes}: At most $\delta^{-\gamma}$ cubes of size $\delta$ are needed to cover the set $S$, in the limit as $\delta\rightarrow 0$ (see for instance \cite{Mat}*{Chapter 5}).

Notice that both the Minkowski--type notions of dimension we introduced in Definitions \ref{d:Eul_Min_dim} and \ref{Lagr_stab_Min_dim} could have been rephrased more rigorously in terms of an appropriate  upper Minkowski content, but since this would have not added anything substantial to the results, we avoid this technicality and directly define the dimension by looking at the shrinking rates \eqref{E_Min_dim_gamma_def} and \eqref{L_dim_gamma}. 

\subsection{Besov spaces} Let $\Omega$ be the whole space $\R^d$ or the $d-$dimensional torus $ \T^d$. For every $p\in [1,\infty]$, $k\in \N$ we will denote by $L^p(\Omega)$ the usual Lebesgue space of measurable $p-$integrable functions and by $W^{k,p}(\Omega)$ the Sobolev space of functions whose $k-$th order derivatives belong to $L^p(\Omega)$.

Moreover recall the definition of the Besov space for any $\theta\in(0,1)$
$$
B^\theta_{p,\infty}(\Omega)=\left\{f\in L^p(\Omega)\,:\, [f]_{B^\theta_{p,\infty}(\Omega)}<\infty \right\},
$$
where
$$
[f]_{B^\theta_{p,\infty}(\Omega)}=\sup_{h\neq 0} \frac{\|f(\cdot+h)-f(\cdot)\|_{L^p(\Omega)}}{|h|^\theta}.
$$
Then the full Besov norm will be given by $\|f\|_{B^\theta_{p,\infty}(\Omega)}=\|f\|_{L^p(\Omega)}+[f]_{B^\theta_{p,\infty}(\Omega)}$.

\subsection{Mollification estimates and commutators}
For every function $f:\Omega\rightarrow \R$ and $\eps>0$ we will write $f_\eps=f*\rho_\eps$ to denote the regularisation of $f$, where $\rho_\eps$ is a standard Friedrichs' mollifier.  We start by recalling the following standard mollification estimates
\begin{align}
\|f-f_\eps\|_{L^p}&\lesssim \eps^\theta [f]_{B^\theta_{p,\infty}}\label{moll1}\\
\|f_\eps\|_{W^{k+1,p}}&\lesssim \eps^{\theta-k-1}[f]_{B^\theta_{p,\infty}}\label{moll2}
\end{align}
for every $k\in \N_0$, $p\in [1,\infty]$ and $\theta\in (0,1)$.

Moreover, we also have the following Constantin-E-Titi type commutator estimate
\begin{equation}\label{CET_commutator}
\|(f g)_\eps-f_\eps g_\eps\|_{W^{k,p}}\lesssim \eps^{\theta+\beta-k} [f]_{B^\theta_{r p,\infty}} [g]_{B^\beta_{r' p,\infty}},
\end{equation}
for every $k\in \N_0$, $p,r\in [1,\infty]$, $\theta,\beta\in (0,1)$, where $\frac{1}{r}+\frac{1}{r'}=1$ and $rp, r'p \geq 1$. 
The latter quadratic commutator estimate (in its particular case $k=0$) is the key observation which led to the proof of the positive part of the Onsager's conjecture in \cite{CET94}, namely the \emph{total} kinetic energy conservation for solutions $v\in L^3_t(B^{\sfrac13 +}_{3,\infty})\cap C^0_t(L^2_x)$.

Indeed, letting $R_\eps:=v_\eps \otimes v_\eps- (v\otimes v)_\eps$, the spatial regularisation $v_\eps:=v*\rho_\eps$ solves the Euler-Reynolds system
$$
\partial_t v_\eps +\div (v_\eps\otimes v_\eps)+\nabla p_\eps=\div R_\eps,
$$
from which, by scalar multiplying by $v_\eps$ itself and using the incompressiblity $\div v_\eps=0$, we get the local (approximate) energy balance
\begin{equation}\label{local_energy_eps}
    \partial_t \left(\frac{|v_\eps|^2}{2}\right)+\div \left(\left(\frac{|v_\eps|^2}{2}+p_\eps \right) v_\eps \right) =v_\eps \cdot \div R_\eps=\div (R_\eps v_\eps)-R_\eps :\nabla v_\eps.
\end{equation}
Then \eqref{moll2} and \eqref{CET_commutator} imply the estimate
\begin{equation}\label{estimate_DRmeasure_approx}
    \|R_\eps :\nabla v_\eps\|_{L^{\sfrac{p}{3}}_{x,t}}\leq \| \nabla v_\eps\|_{L^{p}_{x,t}} \|R_\eps\|_{L^{\sfrac{p}{2}}_{x,t}}\lesssim \eps^{3\theta-1}\| v\|^3_{L^p_t(B^\theta_{p,\infty})},
\end{equation}
whenever $v\in L^p_t(B^\theta_{p,\infty})$, for $p\geq 3$ and $\theta\in (0,1)$, which in particular proves the local (and thus global) energy conservation if $\theta>\sfrac{1}{3}$.

Since in our case we are interested in the local energy balance \eqref{Local_energy}, we will also need to handle a third order commutator that naturally arises when averaging the left hand side of \eqref{Local_energy}, see Lemma \ref{l:higher_average} below.  For this purpose we define 
$$
K^{f,g}_\eps(x):=\int_{\Omega} |f(x-h)-f_\eps(x)|^2(g(x-h)-g_\eps(x))\rho_\eps(h)\,dh.
$$
By its trilinear structure, the previous commutator will enjoy a suitable cubic estimate for which we give a proof for the reader's convenience.  
The purely cubic commutator that arises here is a special instance of a {\it third order cumulant}; see \cite{E07} for a discussion of general cumulant expansions.

\begin{proposition}\label{p:cubic_comm}
Let $\Omega=\R^d,\T^d$. For every $k\in \N_0$, $\theta\in (0,1)$, $p\geq 3$ and any regular enough $f, g :\Omega\rightarrow \R$, we have
\begin{equation}\label{e:cubic_comm}
\|K^{f,g}_\eps\|_{W^{k,\sfrac{p}{3}}}\lesssim \eps^{3\theta-k} [f]^2_{B^\theta_{p,\infty}}[g]_{B^\theta_{p,\infty}}.
\end{equation}
\end{proposition}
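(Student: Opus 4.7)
My plan is to rewrite $K^{f,g}_\eps$ as a quadruple integral of triple products of pure increments, which makes the genuinely cubic cancellation structure manifest. Using $\int \rho_\eps(h')\,dh' = 1$, the elementary identity
\begin{equation*}
f(x-h) - f_\eps(x) = \int \bigl(f(x-h) - f(x-h')\bigr)\,\rho_\eps(h')\,dh'
\end{equation*}
applied twice for the squared factor and once for the linear $g$ factor yields the representation
\begin{equation*}
K^{f,g}_\eps(x) = \iiiint \delta_{h,h_1}f(x)\,\delta_{h,h_2}f(x)\,\delta_{h,h_3}g(x)\,\rho_\eps(h)\rho_\eps(h_1)\rho_\eps(h_2)\rho_\eps(h_3)\,dh\,dh_1\,dh_2\,dh_3,
\end{equation*}
where $\delta_{h,h_i}u(x):=u(x-h)-u(x-h_i)$ denotes a pure increment. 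This is the cubic analogue of the decomposition that underlies the Constantin-E-Titi quadratic commutator estimate \eqref{CET_commutator}.

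For the case $k=0$, since $p\geq 3$ ensures $p/3 \geq 1$, Minkowski's integral inequality together with a single H\"older step with exponents $(p,p,p)$ gives
\begin{equation*}
\bigl\|K^{f,g}_\eps\bigr\|_{L^{p/3}}\leq \iiiint \bigl\|\delta_{h,h_1}f\bigr\|_{L^p}\bigl\|\delta_{h,h_2}f\bigr\|_{L^p}\bigl\|\delta_{h,h_3}g\bigr\|_{L^p}\,\rho_\eps(h)\rho_\eps(h_1)\rho_\eps(h_2)\rho_\eps(h_3)\,dh\,dh_1\,dh_2\,dh_3,
\end{equation*}
and each increment norm is bounded by $|h-h_i|^\theta\,[f]_{B^\theta_{p,\infty}}$ or $|h-h_i|^\theta\,[g]_{B^\theta_{p,\infty}}$. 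Since every $|h|,|h_i|\leq \eps$ on the support of the mollifiers and the four kernels integrate to one, this yields $\|K^{f,g}_\eps\|_{L^{p/3}}\lesssim \eps^{3\theta}[f]^2_{B^\theta_{p,\infty}}[g]_{B^\theta_{p,\infty}}$.

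For $k\geq 1$, I will substitute $z=x-h$ and $y_i=x-h_i$ in the representation above, so that the $x$-dependence becomes confined to the four mollifier factors $\rho_\eps(x-z),\rho_\eps(x-y_i)$. Applying Leibniz to $\partial^\alpha_x$ with $|\alpha|\leq k$ distributes the derivatives onto these factors, and each derivative of $\rho_\eps$ produces a rescaled smooth kernel of bounded $L^1$ mass at the cost of a factor $\eps^{-1}$. Repeating the Minkowski/H\"older/Besov argument of the previous paragraph (with the derivative-kernels in place of $\rho_\eps$) then yields $\|\partial^\alpha_x K^{f,g}_\eps\|_{L^{p/3}}\lesssim \eps^{3\theta-|\alpha|}[f]^2_{B^\theta_{p,\infty}}[g]_{B^\theta_{p,\infty}}$; summing over $|\alpha|\leq k$ and using $\eps\leq 1$ completes the proof. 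The essential ingredient is the pure-increment representation that reveals cubic cancellation; once that is in hand, the rest is a straightforward scaling/dimension count and no genuine obstacle remains.
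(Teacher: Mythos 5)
Your proof is correct, but it is organized differently from the paper's. The paper keeps the mixed increments $f(x-h)-f_\eps(x)$ intact: for $k=0$ it applies Jensen's inequality with respect to the probability measure $\rho_\eps(h)\,dh$, then H\"older in $x$, and bounds $\|f(\cdot-h)-f_\eps\|_{L^p}\leq\|f(\cdot-h)-f\|_{L^p}+\|f-f_\eps\|_{L^p}\lesssim(|h|^\theta+\eps^\theta)[f]_{B^\theta_{p,\infty}}$ by the triangle inequality together with the mollification estimate \eqref{moll1}; for $k=1$ it applies Leibniz directly to the original expression, producing three qualitatively different terms according to whether the derivative hits $f_\eps$, $g_\eps$ or the kernel, each estimated separately (the first two via $\|\partial f_\eps\|_{L^p}\lesssim\eps^{\theta-1}[f]_{B^\theta_{p,\infty}}$, the third via $\|\partial\rho_\eps\|_{L^1}\lesssim\eps^{-1}$), and $k\geq 2$ is left to iteration. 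Your pure-increment quadruple-integral representation does the same analytic work but buys two things: it removes the separate appeal to the mollification estimates, since the triangle-inequality step is absorbed into the identity $f(x-h)-f_\eps(x)=\int(f(x-h)-f(x-h'))\rho_\eps(h')\,dh'$, and, after the change of variables confining all $x$-dependence to the kernels, it treats every derivative order $k$ uniformly with no case analysis, because Leibniz only ever lands on kernel factors of $L^1$ mass $\lesssim\eps^{-|\beta|}$ supported in $|h|\leq\eps$. Two points are worth making explicit when writing this up: for $|\alpha|\geq 1$, Minkowski's integral inequality must be applied with respect to the positive measure $\prod_i|\partial^{\beta_i}\rho_\eps|$, which is no longer a probability measure and whose total mass contributes the factor $\eps^{-|\alpha|}$; and the factorization of $|f(x-h)-f_\eps(x)|^2$ into a product of two linear increments uses that $f$ is scalar, as in the statement of the proposition (for the vector-valued commutator $K^v_\eps$ used later one expands the square as a sum over components, which changes nothing essential).
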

\begin{proof}
We will only give the proof for $k=0,1$, since then the case $k\geq 2$ can be done by iterating the very same computations.  Actually, in this manuscript we will only make use of the estimate \eqref{e:cubic_comm} when $k=1$. 

Since $\rho_\eps\,dh$ is a probability measure on $\Omega$,  we have
$$
|K^{f,g}_\eps (x)|^{\frac{p}{3}}\leq \int_{\Omega} |f(x-h)-f_\eps(x)|^{\frac{2p}{3}}|g(x-h)-g_\eps(x)|^{\frac{p}{3}}\rho_\eps(h)\,dh.
$$
Thus by H\"older inequality 
\begin{equation}
\label{holder1}
\|K^{f,g}_\eps\|_{L^{\sfrac{p}{3}}}^{\frac{p}{3}}\leq \int_{\Omega}\|f(\cdot-h)-f_\eps(\cdot)\|_{L^p}^{\frac{2p}{3}}\|g(\cdot-h)-g_\eps(\cdot)\|_{L^p}^{\frac{p}{3}}\rho_\eps(h)\,dh.
\end{equation}
By using \eqref{moll1}, together with the definition of the Besov norms, we can estimate
\begin{equation}
\label{Besov_split}
\|f(\cdot-h)-f_\eps(\cdot)\|_{L^p}\leq \|f(\cdot-h)-f(\cdot)\|_{L^p}+ \|f-f_\eps\|_{L^p}\lesssim \left(|h|^\theta+\eps^\theta\right)[f]_{B^\theta_{p,\infty}}
\end{equation}
Thus \eqref{holder1} gets to
\begin{equation}
\label{est_order0}
\|K^{f,g}_\eps\|_{L^{\sfrac{p}{3}}}^{\frac{p}{3}}\lesssim [f]_{B^\theta_{p,\infty}}^{\frac{2p}{3}}[g]^{\frac{p}{3}}_{B^\theta_{p,\infty}}\int_{\Omega}\left(|h|^\theta+\eps^\theta\right)^p\rho_\eps(h)\,dh\lesssim \eps^{\theta p}[f]_{B^\theta_{p,\infty}}^{\frac{2p}{3}}[g]^{\frac{p}{3}}_{B^\theta_{p,\infty}},
\end{equation}
where in the last inequality we have also used that the kernel $\rho_\eps$ is supported in the ball $B_\eps(0)$.  
It is clear that \eqref{est_order0} gives \eqref{e:cubic_comm} for $k=0$. 

In the case $k=1$, we need to estimate a derivative of $K^{f,g}_\eps$. To do so, let $\partial K^{f,g}_\eps$ be any partial derivative of the cubic commutator. It is clear that the derivative can hit $f_\eps,g_\eps$ or $\rho_\eps$. More precisely we will need to estimate the following three kind of terms
\begin{align*}
K_1(x)&=\int_{\Omega} \partial f_\eps(x) (f(x-h)-f_\eps(x))(g(x-h)-g_\eps(x))\rho_\eps(h)\,dh\\
K_2(x)&=\int_{\Omega}  |f(x-h)-f_\eps(x)|^2 \partial g_\eps(x)\rho_\eps(h)\,dh\\
K_3(x)&=\int_{\Omega} |f(x-h)- f_\eps(x)|^2(g(x-h)-g_\eps(x))\partial \rho_\eps(h)\,dh.
\end{align*}
By using \eqref{moll2} and \eqref{Besov_split} we can bound
\begin{align}\label{e:K1}
\|K_1\|_{L^{\sfrac{p}{3}}}^{\frac{p}{3}}&\leq  \|\partial f_\eps\|_{L^p}^{\frac{p}{3}}\int_{\Omega}\|f(\cdot-h)-f_\eps(\cdot)\|_{L^p}^{\frac{p}{3}}\|g(\cdot-h)-g_\eps(\cdot)|_{L^p}^{\frac{p}{3}}\rho_\eps(h)\,dh\nonumber\\
&\lesssim \eps^{(\theta-1)\frac{p}{3}} [f]_{B^\theta_{p,\infty}}^{\frac{2p}{3}}[g]_{B^\theta_{p,\infty}}^{\frac{p}{3}}\int_{\Omega}\left(|h|^\theta+\eps^\theta\right)^p\rho_\eps(h)\,dh\nonumber\\
&\lesssim  \eps^{(3\theta-1)\frac{p}{3}} [f]_{B^\theta_{p,\infty}}^{\frac{2p}{3}}[g]_{B^\theta_{p,\infty}}^{\frac{p}{3}}.
\end{align}
Similarly we also get 
\begin{equation}
\label{e:K2}
\|K_2\|_{L^{\sfrac{p}{3}}}^{\frac{p}{3}}\lesssim \eps^{(3\theta-1)\frac{p}{3}} [f]_{B^\theta_{p,\infty}}^{\frac{2p}{3}}[g]_{B^\theta_{p,\infty}}^{\frac{p}{3}}.
\end{equation}
To estimate $K_3$ we notice that a derivative on the kernel $\rho_\eps$ will only bring and $\eps^{-1}$ more to the estimate \eqref{est_order0} that we already proved. More precisely, since $\eps \partial \rho_\eps$ has bounded $L^1$ norm, we have
$$
|K_3 (x)|^{\frac{p}{3}}\lesssim \eps^{1-\frac{p}{3} }\int_{\Omega} |f(x-h)-f_\eps(x)|^{\frac{2p}{3}}|g(x-h)-g_\eps(x)|^{\frac{p}{3}}|\partial \rho_\eps(h)|\,dh,
$$
from which 
\begin{equation}
\label{e:K3}
\|K_3\|_{L^{\sfrac{p}{3}}}^{\frac{p}{3}}\lesssim \eps^{1-\frac{p}{3} } [f]_{B^\theta_{p,\infty}}^{\frac{2p}{3}}[g]^{\frac{p}{3}}_{B^\theta_{p,\infty}}\int_{\Omega}\left(|h|^\theta+\eps^\theta\right)^p|\partial \rho_\eps(h)|\,dh\lesssim \eps^{\theta p-\frac{p}{3}}[f]_{B^\theta_{p,\infty}}^{\frac{2p}{3}}[g]^{\frac{p}{3}}_{B^\theta_{p,\infty}},
\end{equation}
where to obtain the last inequality we have also used $\int |\partial \rho_\eps|\lesssim \eps^{-1}$. 
Inequalities \eqref{e:K1}, \eqref{e:K2} and \eqref{e:K3} give \eqref{e:cubic_comm} for $k=1$.
\end{proof}

\subsection{Double pressure regularity}
We conclude this section by recalling the double regularity of the pressure from \cite{CDF20}
\begin{equation}\label{p_double_reg}
\|p(t)\|_{B^{2\theta}_{\sfrac{p}{2},\infty}}\lesssim \|v(t)\|^2_{B^{\theta}_{p,\infty}},
\end{equation}
for every $\theta\in (0,\sfrac{1}{2})$, $p\in (2,\infty)$. The case $p=\infty$ can be found in \cite{Isett17,CD18}. The double regularity estimate \eqref{p_double_reg} will play a crucial role in the proof of Theorem \ref{t:main} and Theorem \ref{t:main_lagr_dim} since, together with \eqref{CET_commutator},  will allow us to estimate the pressure commutator $P_\eps =(pv)_\eps-p_\eps v_\eps$ and deduce that it also behaves cubically in $\theta$.

\section{Intermittency from lower dimensional time dissipation}\label{s:time_intermitt}
In this section we prove the results concerning lower dimensional singularities in time as claimed in Section \ref{Intro:time}.

\subsection{Proof of Proposition  \ref{p:en_besov_regular}}
The proof is very similar to the case $p=\infty$ proved in \cite{Is2013,CD18}, but here we take the integral in time instead of the supremum and apply Young's inequality.  

Thus assume $p<\infty$ and fix $h> 0$. Mollify $v$ in space at scale $\eps>0$ to obtain the smooth velocity $v_\eps$. We will chose the regularization length scale $\eps=\eps (h)$ at the very end in order to balance the terms. Split
\begin{align*}
\|e_v(\cdot+h)-e_v(\cdot)\|_{L^{\sfrac{p}{3}}_t}&\leq \|e_v(\cdot+h)-e_{v_\eps}(\cdot+h)\|_{L^{\sfrac{p}{3}}_t}+\|e_{v_\eps}(\cdot+h)-e_{v_\eps}(\cdot)\|_{L^{\sfrac{p}{3}}_t}\\
&\quad+ \|e_{v_\eps}(\cdot)-e_{v}(\cdot)\|_{L^{\sfrac{p}{3}}_t}=(I)+(II)+(III).
\end{align*}
The two terms $(I),(III)$ enjoy a very similar estimate. Indeed, since the mollification is average preserving,  by using \eqref{CET_commutator} for every fixed time slice, we have
\begin{align*}
\left| \int_{\Omega} |v(x,t)|^2-|v_\eps(x,t)|^2\,dx\right|&=\left| \int_{\Omega} |v(x,t)|^2_\eps-|v_\eps(x,t)|^2\,dx\right|\lesssim \eps^{2\beta}[v(t)]^2_{B^\beta_{2,\infty}},
\end{align*}
from which, by also integrating in time on the interval $(0,T)$, we get
\begin{equation}\label{increment1-3}
(I),(III)\lesssim \eps^{2\beta}[v]^2_{L^{\sfrac{2p}{3}}_t \left(B^\beta_{2,\infty}\right)}.
\end{equation}
Moreover, by integrating on $\Omega$ the Euler-Reynolds local energy balance \eqref{local_energy_eps} we get
$$
\frac{d}{dt} e_{v_\eps}=\frac{1}{2}\frac{d}{dt}\left(\int_\Omega |v_\eps|^2\,dx\right)=-\int_\Omega  R_\eps : \nabla v_{\eps} \,dx. 
$$

Thus to estimate $(II)$ we use the usual Constantin-E-Titi quadratic estimate \eqref{CET_commutator} together with \eqref{moll2}
\begin{align*}
\left| e_{v_\eps}(t+h)-e_{v_\eps}(t)\right|&\leq \int_t^{t+h}\left| e'_{v_\eps}(s) \right|\,ds\leq \int_t^{t+h}\int_{\Omega}\left| R_\eps : \nabla v_{\eps} \right|\,dxds\\
&\lesssim \eps^{3\theta-1}\int_0^T[v(s)]^3_{B^\theta_{3,\infty}}\mathbbm{1}_{(t,t+h)}(s)\,ds.
\end{align*}
Thus by Young's inequality
\begin{align}\label{increment2}
\|e_{v_\eps}(\cdot+h)-e_{v_\eps}(\cdot)\|_{L^{\sfrac{p}{3}}_t}&\lesssim  \eps^{(3\theta-1)}\left(\int_0^T[v(s)]^{p}_{B^\theta_{3,\infty}} ds \right)^{3/p} \| 1_{(-h, 0)}(s) \|_{L^1} \notag \\
&\lesssim \eps^{(3\theta-1)} h [v]_{L^{p}_t\left(B^\theta_{3,\infty} \right)}^3.
\end{align}

Collecting \eqref{increment1-3} and \eqref{increment2} we obtain
$$
\|e_v(\cdot+h)-e_v(\cdot)\|_{L^{\sfrac{p}{3}}_t}\lesssim \eps^{2\beta}+\eps^{3\theta-1}h,
$$
from which by choosing $\eps =h^{\frac{1}{1-3\theta+2\beta}}$ we conclude the proof.  
\qed

\subsection{Proof of Theorem \ref{t:en_cons_in_time}}
Fix any $\eps>0$. By Proposition \ref{p:en_besov_regular} we have 
$$
e_v\in B^{\frac{2\beta}{1-3\theta+2\beta}}_{{\sfrac{p}{3}},\infty}([0,T])\subset W^{\frac{2\beta}{1-3\theta+2\beta}-\eps,{\sfrac{p}{3}}}([0,T]),
$$ 
from which
$$
e'_v\in W^{\frac{2\beta}{1-3\theta+2\beta}-\eps-1,{\sfrac{p}{3}}}([0,T])=\left( W^{1+\eps-\frac{2\beta}{1-3\theta+2\beta},({\sfrac{p}{3}})'}([0,T])\right)^*,
$$
being $({\sfrac{p}{3}})'=\frac{p}{p-3}$ the H\"older conjugate of ${\sfrac{p}{3}}$ and $X^*$ the usual dual vector space of $X$.

Let $\delta>0$, $(S_T)_\delta$ a $\delta-$neighbourhood of the set $S_T$ and $\eta_\delta\in C^\infty([0,T])$ be such that
$$
\eta_{\delta}\big|_{(S_T)_\delta}\equiv 1,\quad  \eta_{\delta}\big|_{[0,T]\setminus (S_T)_{2\delta}}\equiv 0.
$$
Then for every $\alpha\in (0,1)$, $q\in (1,\infty)$, $\tilde \gamma>\gamma$, we 
can estimate by \eqref{Mink_asympt}
\begin{equation}\label{est_on_eta_time}
\| \eta_\delta\|_{W^{\alpha,q}}\leq \| \eta_\delta\|^{1-\alpha}_{L^q}\| \eta'_\delta \|^\alpha_{L^q}\lesssim \delta^{(1-\tilde \gamma)\frac{1}{q}-\alpha}.
\end{equation}
Let $\varphi\in C_c^\infty((0,T))$. 
Since 
the distribution $e'_v$ is supported on $S_T$, we have 
\begin{equation}
\label{est_funct_e'}
|\langle e'_v,\varphi\rangle|=|\langle e'_v,\eta_\delta \varphi\rangle|\lesssim \| \eta_\delta \varphi\|_{W^{1+\eps-\frac{2\beta}{1-3\theta+2\beta},({\sfrac{p}{3}})'}}\lesssim \delta^{(1-\tilde \gamma)\frac{p-3}{p}+\frac{2\beta}{1-3\theta+2\beta}-1-\eps},
\end{equation}
where in the last inequality we used \eqref{est_on_eta_time}. Thus if
$$
\frac{2\beta}{1-3\theta+2\beta}>1-\frac{p-3}{p}(1-\gamma).
$$
we can chose $\eps>0$ sufficiently small and $\tilde \gamma$ sufficiently close to $\gamma$, such that the exponent of $\delta$ in the right hand side of \eqref{est_funct_e'} is strictly positive. The proof is concluded by letting $\delta\rightarrow 0$.
\qed

\section{Intermittency from lower dimensional spatial dissipation}\label{s:spatial_intermitt}
In this section we will prove our main intermittency--type results when the dissipation support is not space filling. To avoid tedious technicalities that arise when considering the whole space $\R^d$ (for instance the need to introduce the bounded set $M$ in the definition of the Minkowski dimensions in \eqref{E_Min_dim_gamma_def} and \eqref{L_dim_gamma}), we will give the full proofs when $\Omega=\T^d$. Then the case $\Omega=\R^d$ can be reconstructed by straightforward modifications. The general strategy in the proofs of Theorem \ref{t:main} and Theorem \ref{t:main_lagr_dim} is the same, but since the two differs in some important technical details (like the definition of the cut--off function and the way the transport error is handled) we have provided the full details for each of them separately for the convenience of the reader at the risk of some redundancy. 

Before proving our main theorems we start with the following lemma which  plays a crucial role in the averaging process of the local energy balance \eqref{Local_energy}, with emphasis on the cubic term $|v|^2 v$. 
\begin{lemma}[Higher--order averaging]
\label{l:higher_average}
By denoting $f_\eps$ to be the spatial mollification of $f$ at scale $\eps$, we have
$$
\left(|v|^2 v \right)_\eps=K^v_\eps -2R_\eps v_\eps -v_\eps \tr R_\eps + |v_\eps|^2 v_\eps,
$$
where $R_\eps=v_\eps\otimes v_\eps -(v\otimes v)_\eps$ is the quadratic Constantin-E-Titi commutator, $R_\eps v_\eps$ is the usual notation for the $(d\times d)-$matrix $R_\eps$ applied to the vector $v_\eps$ and 
$$
K^v_\eps(x):=\int |v(x-h)-v_\eps(x)|^2 (v(x-h)-v_\eps(x))\rho_\eps(h)\,dh.
$$
\end{lemma}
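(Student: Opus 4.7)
The identity is purely algebraic, obtained by expanding the cube inside $K^v_\eps$ and integrating term by term against the probability measure $\rho_\eps(h)\,dh$. My plan is as follows.

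First, I would fix $x$, write $u := v(x-h)$ and $w := v_\eps(x)$, and expand
\begin{equation*}
|u-w|^2(u-w) = |u|^2 u \;-\; |u|^2 w \;-\; 2(u\cdot w)\,u \;+\; 2(u\cdot w)\,w \;+\; |w|^2 u \;-\; |w|^2 w.
\end{equation*}
Since $w = v_\eps(x)$ is constant in $h$, integration against $\rho_\eps(h)\,dh$ produces, respectively, $(|v|^2 v)_\eps$, $v_\eps\,(|v|^2)_\eps$, $2\,(v\otimes v)_\eps\,v_\eps$, $2|v_\eps|^2 v_\eps$, $|v_\eps|^2 v_\eps$, and $|v_\eps|^2 v_\eps$. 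The last three terms combine to $2|v_\eps|^2 v_\eps$, so altogether
\begin{equation*}
K^v_\eps \;=\; (|v|^2 v)_\eps \;-\; v_\eps\,(|v|^2)_\eps \;-\; 2\,(v\otimes v)_\eps\,v_\eps \;+\; 2|v_\eps|^2 v_\eps .
\end{equation*}

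Second, I would substitute the identities that follow directly from the definition of $R_\eps$, namely
\begin{equation*}
(v\otimes v)_\eps \;=\; v_\eps\otimes v_\eps - R_\eps, \qquad (|v|^2)_\eps \;=\; |v_\eps|^2 - \tr R_\eps,
\end{equation*}
which give $(v\otimes v)_\eps v_\eps = |v_\eps|^2 v_\eps - R_\eps v_\eps$ and $v_\eps(|v|^2)_\eps = |v_\eps|^2 v_\eps - v_\eps \tr R_\eps$. Plugging these into the previous display and solving for $(|v|^2 v)_\eps$ yields exactly the claimed formula.

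There is no real obstacle here; the only thing to be careful about is bookkeeping of the scalar/vector/matrix contractions, in particular the term $2(u\cdot w)u$ which produces the tensor $(v\otimes v)_\eps$ acting on $v_\eps$ (and not, say, $((v\otimes v)_\eps v_\eps)^T$ or a trace). Once the six expanded terms are integrated and collected, the identity follows in one line.
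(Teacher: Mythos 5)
Your proof is correct and is essentially the same argument as the paper's: both verify the identity by expanding the cube $|u-w|^2(u-w)$ with $w=v_\eps(x)$ treated as a constant under the average $\rho_\eps(h)\,dh$, and then identifying $R_\eps v_\eps$ and $v_\eps\tr R_\eps$ from the definition of $R_\eps$. The only cosmetic difference is that you expand $K^v_\eps$ and solve for $(|v|^2v)_\eps$, whereas the paper expands $(|v|^2 v)_\eps$ directly by writing $v=(v-v_\eps(x))+v_\eps(x)$; the bookkeeping (including the contraction $(u\cdot w)u=(u\otimes u)w$) checks out.
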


\begin{proof}
Fix $x\in \Omega$.  By thinking of the mollification $(\cdot )_\eps$ as an average with respect to the probability measure $\rho_\eps(x-y)\,dy$ and $v_\eps(x)$ as a constant,  we have 
$$
\big( v-v_\eps(x)\big)_\eps =0.
$$
Thus we can expand
\begin{align*}
\left(|v|^2 v \right)_\eps&=\left(|v|^2 (v-v_\eps(x)) \right)_\eps+|v|^2_\eps v_\eps=\left(|v-v_\eps(x)+v_\eps(x)|^2 (v-v_\eps(x)) \right)_\eps+|v|^2_\eps v_\eps\\
&=\left(|v-v_\eps(x)|^2 (v-v_\eps(x)) \right)_\eps+|v_\eps(x)|^2 \big(v-v_\eps(x) \big)_\eps\\
&\quad+2  \big((v-v_\eps(x))\cdot  v_\eps(x) (v-v_\eps(x)) \big)_\eps+|v|^2_\eps v_\eps\,.
\end{align*}
Moreover 
\begin{align*}
 \big((v-v_\eps(x))\cdot  v_\eps(x) (v-v_\eps(x)) \big)_\eps&= \big(v\cdot  v_\eps(x) (v-v_\eps(x)) \big)_\eps-|v_\eps(x)|^2\big( v-v_\eps(x)\big)_\eps\\
 &=\big( v\cdot v_\eps(x) v\big)_\eps-|v_\eps|^2v_\eps=-R_\eps v_\eps\,,
\end{align*}
from which we get 
\begin{align*}
\left(|v|^2 v \right)_\eps&=\left(|v-v_\eps(x)|^2 (v-v_\eps(x)) \right)_\eps-2R_\eps v_\eps+|v|^2_\eps v_\eps\\
&=\left(|v-v_\eps(x)|^2 (v-v_\eps(x)) \right)_\eps-2R_\eps v_\eps+\left(|v|^2_\eps - |v_\eps|^2\right)  v_\eps +  |v_\eps|^2  v_\eps\,.
\end{align*}
\end{proof}

The following lemma is standard
\begin{lemma}\label{l:eulerian_cutoff}
Define
 $
 (S)_{\delta,\delta}=\left\{(x+B_\delta(0),t+B_\delta(0))\, :\, (x,t)\in S \right\}
 $
 to be the space--time $\delta,\delta-$neighbourhood of the set $S$ which enjoys \eqref{E_Min_dim_gamma} and let $q\geq 1$. There exists $\chi_\delta\in C^\infty([0,T]\times \T^d)$ such that $ \chi_\delta\big|_{(S)_{\delta,\delta}}\equiv 1$,  $ \chi_\delta\big|_{\left((S)_{4\delta,4\delta}\right)^c}\equiv 0$ and 
\begin{align}
\|\chi_\delta\|_{L^q_{x,t}}&\lesssim \delta^{(d-\gamma)\frac{1}{q}}\label{est_chi}\\
\| \partial_t \chi_\delta\|_{L^q_{x,t}}+\| \nabla \chi_\delta\|_{L^q_{x,t}}&\lesssim \delta^{(d-\gamma)\frac{1}{q}-1}.\label{est_chi_der}
\end{align}
\end{lemma}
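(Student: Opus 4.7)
The plan is to construct $\chi_\delta$ by mollifying the indicator function of the intermediate enlargement $(S)_{2\delta,2\delta}$ with a standard space--time bump function. Concretely, fix a smooth nonnegative kernel $\tilde\rho_\delta$ on $\mathbb{R}\times\T^d$ supported in the space--time ball of radius $\delta/4$ with $\int \tilde\rho_\delta = 1$ and $\|\nabla_{x,t}\tilde\rho_\delta\|_{L^1}\lesssim \delta^{-1}$, and set
\begin{equation*}
\chi_\delta := \mathbbm{1}_{(S)_{2\delta,2\delta}} * \tilde\rho_\delta.
\end{equation*}
Then $\chi_\delta \in C^\infty$ with $0\leq \chi_\delta\leq 1$ and its support is contained in $(S)_{3\delta,3\delta}$.

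The first step is to verify the two pointwise conditions. If $(x,t)\in (S)_{\delta,\delta}$ and $(y,s)$ lies in the support of $\tilde\rho_\delta$, then $(x-y,t-s)\in (S)_{\delta+\delta/4,\,\delta+\delta/4}\subset (S)_{2\delta,2\delta}$, so the integrand equals $1$ everywhere and $\chi_\delta(x,t)=1$. Conversely, if $(x,t)\notin (S)_{4\delta,4\delta}$, then $(x-y,t-s)$ remains at space--time distance $\geq 4\delta-\delta/4 > 2\delta$ from $S$, hence outside $(S)_{2\delta,2\delta}$, and $\chi_\delta(x,t)=0$.

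The second step is the quantitative bounds. Since $\chi_\delta$ is bounded by $1$ and supported in $(S)_{3\delta,3\delta}$, the assumption \eqref{E_Min_dim_gamma} (applied to a bounded $M$ containing the relevant neighbourhood, which in the periodic case is automatic) yields
\begin{equation*}
\|\chi_\delta\|_{L^q_{x,t}}^q \leq \mathcal{H}^{d+1}\big((S)_{3\delta,3\delta}\big) \lesssim \delta^{d-\gamma},
\end{equation*}
which is \eqref{est_chi}. For the derivative estimate, writing $\nabla_{x,t}\chi_\delta = \mathbbm{1}_{(S)_{2\delta,2\delta}}*\nabla_{x,t}\tilde\rho_\delta$, Young's convolution inequality gives the pointwise bound $|\nabla_{x,t}\chi_\delta|\lesssim \delta^{-1}$, while the support is still inside $(S)_{3\delta,3\delta}$. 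Combining these two facts yields
\begin{equation*}
\|\partial_t \chi_\delta\|_{L^q_{x,t}} + \|\nabla \chi_\delta\|_{L^q_{x,t}} \lesssim \delta^{-1}\,\mathcal{H}^{d+1}\big((S)_{3\delta,3\delta}\big)^{1/q} \lesssim \delta^{(d-\gamma)/q - 1},
\end{equation*}
which is \eqref{est_chi_der}. There is no genuine obstacle here; the only care required is to keep the nested radii $\delta,\,2\delta,\,3\delta,\,4\delta$ compatible with the mollification scale, and, in the $\R^d$ case, to localize the mollifier so that its support does not leak outside the bounded reference set $M$ used in \eqref{E_Min_dim_gamma}.
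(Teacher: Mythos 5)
Your construction is exactly the paper's: the authors also define $\chi_\delta$ as the space--time mollification of $\mathbbm{1}_{(S)_{2\delta,2\delta}}$ at scale comparable to $\delta$ and assert the stated properties, which you verify in the same straightforward way (pointwise conditions from the nesting of radii, $L^q$ bounds from the measure estimate \eqref{E_Min_dim_gamma} applied at scale $3\delta$ together with the $\delta^{-1}$ gain on derivatives of the kernel). The proposal is correct and matches the paper's proof.
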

\begin{proof}
Let $\mathbbm{1}_{(S)_{2\delta,2\delta}}(x,t)$ be the indicator function of the set $(S)_{2\delta,2\delta}$. Then the space--time mollification at scale $\delta$, $\chi_\delta=\mathbbm{1}_{(S)_{2\delta,2\delta}}*_{x,t} \rho_\delta$ satisfies all the claimed properties, where 
$$
\rho_\delta(x,t):=\frac{1}{\delta^{d+1}}\rho\left(\frac{x}{\delta},\frac{t}{\delta}\right)
$$
for some $\rho\in C^\infty_c$, with  $\int_{\mathbb R}\int_{{\mathbb R^d}} \rho\,dxdt=1$.
\end{proof}

\subsection{Proof of Theorem \ref{t:main}}
Let $D_\eps^v=R_\eps:\nabla v_\eps$ be the approximation of the Duchon-Robert distribution, as in \eqref{DR_measure}. Let $\varphi\in C^\infty_c(\T^d\times (0,T))$ be a test function.  We want to show that $\langle D_\eps^v,\varphi\rangle\rightarrow 0$ as $\eps \rightarrow 0$.  Let $\chi_\delta$ be the cut--off function given by Lemma \ref{l:eulerian_cutoff}. We split the action of $D^v$ as 
\begin{equation}\label{split_D}
\langle D_\eps^v,\varphi\rangle=\langle D_\eps^v,\varphi\chi_\delta\rangle+\langle D_\eps^v,\varphi(1-\chi_\delta)\rangle.
\end{equation}
The first term is easy to estimate by using \eqref{est_chi} together with \eqref{estimate_DRmeasure_approx}
\begin{equation}\label{est_D_first_easy}
\left| \langle D_\eps^v,\varphi\chi_\delta\rangle\right|\leq \| D^v_\eps\|_{L^{\sfrac{p}{3}}_{x,t}}\|\varphi \chi_\delta\|_{L^{({\sfrac{p}{3}})'}_{x,t}}\lesssim \eps^{3\theta-1}\delta^{(d-\gamma)\frac{p-3}{p}}\|v\|^3_{L^{p}_t(B^\theta_{p,\infty})}.
\end{equation}
Consider now $D^v *\rho_\eps$ to be the spatial mollification of the Duchon-Robert distribution. We have 
$$
\spt D^v *\rho_\eps\subset \left\{(x+B_\eps(0),t)\,:\, (x,t)\in S \right\}\subset (S)_{\eps,\eps},
$$
from which we deduce 
\begin{equation}\label{support_condit}
\langle D^v *\rho_\eps,\varphi(1-\chi_\delta)\rangle =0\quad \text{if } \eps\leq \frac{\delta}{2}.
\end{equation}
Thus we can rewrite the second term in the right hand side of \eqref{split_D} as 
\begin{align*}
\langle D^v_\eps,\varphi(1-\chi_\delta)\rangle &=\langle D^v_\eps- D^v *\rho_\eps,\varphi(1-\chi_\delta)\rangle \\
&=\langle D^v_\eps- D^v *\rho_\eps,\varphi\rangle -\langle D^v_\eps- D^v *\rho_\eps,\varphi\chi_\delta\rangle.
\end{align*}
Clearly, $\langle D^v_\eps- D^v *\rho_\eps,\varphi\rangle\rightarrow 0$; thus we are left to estimate the term $\langle D^v_\eps- D^v *\rho_\eps,\varphi\chi_\delta\rangle$. 
Notice that by mollifying \eqref{Local_energy} we get
\begin{align}\label{moll_local_energy}
\partial_t \left(\frac{|v|^2_\eps}{2}\right)+\div \left(\left(\frac{|v|^2}{2}+p \right) v \right)_\eps &=-D^v * \rho_\eps.
\end{align}
Then by \eqref{local_energy_eps} and \eqref{moll_local_energy} we can write
\begin{align}
\langle D^v_\eps- D^v *\rho_\eps, \varphi\chi_\delta\rangle&=\left\langle\partial_t\left( \frac{|v|^2_\eps-|v_\eps|^2}{2}\right), \varphi \chi_\delta \right\rangle + \int \div \left((pv)_\eps-p_\eps v_\eps \right) \varphi\chi_\delta\nonumber\\
&\quad+\int \left( \div(R_\eps v_\eps)+\frac{1}{2} \div \left(\left( |v|^2v\right)_\eps - |v_\eps|^2v_\eps \right) \right) \varphi\chi_\delta.\label{eul_all_terms}
\end{align}
By Lemma \ref{l:higher_average} expand 
\begin{equation}\label{cubic_expansion}
(|v|^2v)_\eps =K^v_\eps -2R_\eps v_\eps +(|v|^2_\eps-|v_\eps|^2)v_\eps+ |v_\eps|^2v_\eps,
\end{equation}
where the vector $K_\eps^v$ is the trilinear commutator
$$
K_\eps^v=\int |v(x-h)-v_\eps(x)|^2 (v(x-h)-v_\eps(x))\rho_\eps(h)\,dh.
$$
By plugging \eqref{cubic_expansion} into \eqref{eul_all_terms} we get, by also denoting $P_\eps=(pv)_\eps-p_\eps v_\eps$ the pressure--velocity commutator and by $D_{t,v_\eps}=\partial_t +v_\eps \cdot \nabla$ the advective derivative with respect to $v_\eps$,
\begin{align*}
\langle D_\eps^v -D^v *\rho_\eps,\varphi\chi_{\delta}\rangle &=\left\langle \partial_t\left( \frac{|v|^2_\eps-|v_\eps|^2}{2}\right), \varphi \chi_{\delta} \right\rangle+\int \varphi\chi_{\delta} \div P_\eps  \\
&\quad+ \int  \varphi\chi_{\delta} \div K^v_\eps + \int \varphi\chi_{\delta} \div \left( \frac{|v|^2_\eps-|v_\eps|^2}{2}v_\eps \right) \\
&=-\left\langle D_{t,v_\eps} \frac{\tr R_\eps}{2}, \varphi\chi_{\delta}\right\rangle +\int \varphi\chi_{\delta} \div P_\eps  + \int  \varphi\chi_{\delta} \div K^v_\eps \\
&=E_{tr}+E_{pr}+E_{Re}.
\end{align*}
Thus we have three types of errors to estimate: The transport error $E_{tr}$, the pressure error $E_{pr}$ and the Reynolds error $E_{Re}$, the most delicate being $E_{tr}$. Indeed, the different assumptions we made on the time (Lagrangian/Eulerian) stability of the dissipation set are specifically designed to handle that term.

By using \eqref{CET_commutator}, together with the double regularity of the pressure \eqref{p_double_reg}, we have 
\begin{equation}\label{est_II}
|E_{pr}|\lesssim \| (pv)_\eps-p_\eps v_\eps  \|_{L^{\sfrac{p}{3}}_t(W^{1,{\sfrac{p}{3}}}_x)}\| \varphi\chi_\delta\|_{L^{({\sfrac{p}{3}})'}_{x,t}}\lesssim \eps^{3\theta-1}\delta^{(d-\gamma)\frac{p-3}{p}}\|v\|_{L^{p}_t(B^\theta_{p,\infty})}^3.
\end{equation}
Moreover, by Proposition \ref{p:cubic_comm} we also have
\begin{equation}
\label{eul_Rey_err}
|E_{Re}|\leq \|\div K_\eps^v\|_{L^{\sfrac{p}{3}}_{x,t}} \|\varphi\chi_{\delta}\|_{L^{(\sfrac{p}{3})'}_{x,t}}\lesssim \eps^{3\theta-1}\delta^{(d-\gamma)\frac{p-3}{p}}\|v\|_{L^{p}_t(B^\theta_{p,\infty})}^3.
\end{equation}

To estimate $E_{tr}$ we will need to integrate by parts. Since $\varphi$ is smooth and independent of $\delta$,  then the only problematic terms are the ones in which the derivatives hit $\chi_\delta$.  More precisely we can write
\begin{align*}
E_{tr}\simeq  \int(\tr R_\eps) \varphi \partial_t\chi_\delta + \int \varphi  (\tr R_\eps) v_\eps \cdot \nabla \chi_\delta=I+II,
\end{align*}
where we used the symbol $\simeq$ to denote the behaviour of the leading order terms as $\delta \rightarrow 0$.  By \eqref{CET_commutator} and \eqref{est_chi_der}
\begin{equation*}
|I|\lesssim \| \tr R_\eps\|_{L^{\sfrac{p}{3}}_{x,t}}\|\varphi \partial_t\chi_\delta\|_{L^{({\sfrac{p}{3}})'}_{x,t}}\lesssim \eps^{2\theta} \|v\|^2_{L^{\sfrac{2p}{3}}_t(B^\theta_{{\sfrac{2p}{3}},\infty})} \delta^{(d-\gamma)\frac{p-3}{p}-1}
\end{equation*}
and 
$$
|II|\lesssim \| v_\eps \tr R_\eps\|_{L^{\sfrac{p}{3}}_{x,t}}\|\varphi \nabla\chi_\delta\|_{L^{({\sfrac{p}{3}})'}_{x,t}}\lesssim \eps^{2\theta} \|v\|^3_{L^p_t(B^\theta_{p,\infty})} \delta^{(d-\gamma)\frac{p-3}{p}-1},
$$
from which we deduce
\begin{align}\label{est_III}
|E_{tr}|\lesssim  \eps^{2\theta}\delta^{(d-\gamma)\frac{p-3}{p}-1} \left( \|v\|^3_{L^{p}_t(B^\theta_{p,\infty})}+ \|v\|^2_{L^{p}_t(B^\theta_{p,\infty})}\right).
\end{align}

By combining now \eqref{est_D_first_easy}, \eqref{est_II}, \eqref{eul_Rey_err} and \eqref{est_III} we conclude 
\begin{equation}
\label{final_bound_D}
\left|\langle D^v_\eps,\varphi\rangle \right|\lesssim  \eps^{3\theta-1}\delta^{(d-\gamma)\frac{p-3}{p}}+ \eps^{2\theta}\delta^{(d-\gamma)\frac{p-3}{p}-1}.
\end{equation}
By choosing $\delta=\eps^\alpha$ and optimizing in $\alpha$ we obtain $\alpha=1-\theta$. Notice that this choice is consistent with the condition $\eps\leq \delta$ in \eqref{support_condit}. Thus inserting $\delta=\eps^{1-\theta}$ in \eqref{final_bound_D} we obtain 
\begin{equation}\label{eulerian_final_bound}
\left|\langle D^v_\eps,\varphi\rangle \right|\lesssim \eps^{\left(\frac{2\theta}{1-\theta}-1+(d-\gamma)\frac{p-3}{p} \right)(1-\theta)},
\end{equation}
which concludes the proof by letting $\epsilon\rightarrow 0$.
\qed

\begin{remark}[Eulerian theorem with $v\in L^q_t(B^\theta_{p,\infty})$]
\label{r:eulerian_L3_in_time}
Following exactly the same lines of the previous proof, it is not difficult to see that the bound \eqref{eulerian_final_bound}, and thus the very same conclusion of Theorem \ref{t:main}, can be achieved by assuming $v\in L^q_t(B^\theta_{p,\infty})$, for any two (possibly different) $p,q\in [3,\infty]$ and slightly strengthening the assumption \eqref{E_Min_dim_gamma} by asking
\begin{equation}
    \label{Eulerian_generalised}
    \mathcal{H}^d\big( M\cap (S)_{\delta,\delta}(t)\big)\lesssim g(t) \delta^{d-\gamma},\quad \text{for some } g\in L^{r}([0,T]), \, \text{with } r=\frac{p-3}{p}\frac{q}{q-3}, 
\end{equation}
 where we denoted by
 $$
 (S)_{\delta,\delta}(t):=\big\{x\in \Omega\,:\, (x,t)\in (S)_{\delta,\delta} \big\}
 $$
 the $t-$time slice of the set $(S)_{\delta,\delta}\subset \Omega\times (0,T)$. Note that since we are on a bounded interval $(0,T)$ we can always suppose, without loosing in generality, that $q\leq p$, so that $r\geq 1$. It readily follows that in general \eqref{Eulerian_generalised} is stronger than \eqref{E_Min_dim_gamma}, while by Fubini's theorem the two coincide if $q=p$, since in this case $g\in L^1([0,T])$, thus recovering the same assumptions used in Theorem \ref{t:main}. For $q=3$ --- that is, for solutions $v\in L^3_t(B^\theta_{p,\infty})$ --- we must have $g\in L^\infty([0,T])$. In this case the requirement \eqref{Eulerian_generalised} reads as a (quite strong) uniform in time spatial upper Minkowski dimension at most $\gamma$: This is equivalent to ask that when covering $S$ with space--time cubes of size $\delta$, then every $t-$time slice of that covered set sees at most $\delta^{-\gamma}$ of such cubes.
\end{remark}

\subsection{Proof of Theorem \ref{t:main_lagr_dim}}  We now prove Theorem~\ref{t:main_lagr_dim}.  The main difference in the proof is how the transport term is handled in order to obtain a better exponent. This requires a more accurate definition of the cut--off function used to localise the Duchon-Robert distribution on the dissipative set $S$.

Let $\delta>0$, and $\tau\leq \delta^{1-\theta+\sfrac{(d-\gamma)}{p}}$. Pick a smooth cut--off function $ \tilde \chi_{\delta,\tau}\in C^\infty_c\left( \mathcal{L}^{V^\delta}(S)_{4\delta,4\tau} \right)$ such that 
\begin{equation}
\label{indicator}
 \tilde \chi_{\delta,\tau}\big|_{\mathcal{L}^{V^\delta}(S)_{2\delta,2\tau}}\equiv 1,
\end{equation}

where $\mathcal{L}^{V^\delta}(S)_{2\delta,2\tau}$ is the Lagrangian tubular neighbourhood defined in \eqref{Lagran_neigh}.
Define $\chi_{\delta,\tau}$ by mollifying $\tilde \chi_{\delta,\tau}$ along the flow map $\Phi^{V^\delta}$ (defined in \eqref{flow_def}), i.e.
$$
\chi_{\delta,\tau}(x,t)=\int \tilde \chi_{\delta,\tau}\left( \Phi^{V^\delta}_s(x,t)\right) \eta_\tau(s)\,ds,
$$
where the kernel $\eta_\tau$ is a standard $1-$dimensional Friedrichs mollifier with support in $|s| < \tau$.  We remark that this mollification technique also plays a role in convex integration \cite{Is17holder} and second derivative estimates for suitable weak solutions to Navier-Stokes \cites{VY21, Y20}, but the present use appears to be the first application in the context of Onsager singularity theorems.

Let $D_\eps^v=R_\eps:\nabla v_\eps$ be the approximation of the Duchon-Robert distribution from \eqref{DR_measure}. Let $\varphi\in C^\infty_c(\T^d\times (0,T))$ be a test function.  We want to show that $\langle D_\eps^v,\varphi\rangle\rightarrow 0$ as $\eps \rightarrow 0$. We split it as 
\begin{equation}\label{L_split_D_lagrang}
\langle D_\eps^v,\varphi\rangle=\langle D_\eps^v,\varphi\chi_{\delta,\tau}\rangle+\langle D_\eps^v,\varphi(1-\chi_{\delta,\tau})\rangle.
\end{equation}
  We remark that, although $\varphi \chi_{\delta, \tau}$ is not (at least in the time variable) a smooth test function, we need only linearity and for each term to be well--defined to justify the decomposition ~\eqref{L_split_D_lagrang}.  The subsequent computations show indeed that both of the terms make sense, and thus they a posteriori provide a proof to the validity of such a splitting.

The first term we bound using \eqref{estimate_DRmeasure_approx} and \eqref{L_dim_gamma} together with the fact that $\Phi^{V^\delta}$ is volume preserving
\begin{equation*}
\left| \langle D_\eps^v,\varphi\chi_{\delta,\tau}\rangle\right|\leq \| D^v_\eps\|_{L^{\sfrac{p}{3}}_{x,t}}\|\varphi \chi_{\delta,\tau}\|_{L^{({\sfrac{p}{3}})'}_{x,t}}\lesssim \eps^{3\theta-1}\delta^{(d-\gamma)\frac{p-3}{p}}\|v\|^3_{L^{p}_t(B^\theta_{p,\infty})}.
\end{equation*}
Now consider $D^v *\rho_\eps$, the space mollification of the Duchon-Robert distribution. It is supported in a spatial neighborhood of the dissipation support:
$$
\spt D^v *\rho_\eps\subset \left\{(x+B_\eps(0),t)\,:\, (x,t)\in S \right\}=: (S)_{\eps}.
$$
By the definition of the cut--off $\chi_{\delta,\tau}$ and \eqref{indicator} we have
$$
  \chi_{\delta,\tau} \equiv 1 \quad \text{in a neighborhood of } (S)_{\eps} \text{ if } \, \eps\leq \delta\,,
$$
which implies that 
\begin{equation}
\label{L_support_cond_lagr}
\langle D^v *\rho_\eps,\varphi(1-\chi_{\delta,\tau})\rangle =0\quad \text{if } \eps\leq \delta.
\end{equation}
Thus we can rewrite the second term in the right hand side of \eqref{L_split_D_lagrang} as 
\begin{align*}
\langle D^v_\eps,\varphi(1-\chi_{\delta,\tau})\rangle &=\langle D^v_\eps- D^v *\rho_\eps,\varphi(1-\chi_{\delta,\tau})\rangle \\
&=\langle D^v_\eps- D^v *\rho_\eps,\varphi\rangle -\langle D^v_\eps- D^v *\rho_\eps,\varphi\chi_{\delta,\tau}\rangle.
\end{align*}
Clearly $\langle D^v_\eps- D^v *\rho_\eps,\varphi\rangle\rightarrow 0$, thus we are left to estimate the second term.   By using \eqref{local_energy_eps}, \eqref{moll_local_energy} and Lemma \ref{l:higher_average} as in the proof of Theorem \ref{t:main}  given above, we can write
\begin{align*}
\langle D_\eps^v -D^v *\rho_\eps,\varphi\chi_{\delta,\tau}\rangle &=-\frac{1}{2}\left \langle D_{t,v_\eps} \tr R_\eps, \varphi\chi_{\delta,\tau}\right\rangle+\int \varphi\chi_{\delta,\tau} \div P_\eps  + \int  \varphi\chi_{\delta,\tau} \div K_\eps \\
&=E_{tr}+E_{pr}+E_{Re}.
\end{align*}
where $P_\eps=(pv)_\eps-p_\eps v_\eps$ is the pressure--velocity commutator, $K^v_\eps$ is the cubic commutator from Lemma \ref{l:higher_average} and  $D_{t,v_\eps}=\partial_t +v_\eps \cdot \nabla$  denotes the advective derivative with respect to $v_\eps$. 

By \eqref{e:cubic_comm} and since $\Phi^{V^\delta}$ is volume preserving, we have 
\begin{equation}
\label{L_Rey_err_lagr}
|E_{Re}|\leq \|\div K_\eps^v\|_{L^{\sfrac{p}{3}}_{x,t}} \|\varphi\chi_{\delta,\tau}\|_{L^{\frac{p}{p-3}}_{x,t}}\lesssim \eps^{3\theta-1}\delta^{(d-\gamma)\frac{p-3}{p}},
\end{equation}
and similarly, by using the double regularity of the pressure \eqref{p_double_reg} together with \eqref{CET_commutator}, we also get
\begin{equation}
\label{L_press_err_lagr}
|E_{pr}|\leq \|\div P_\eps\|_{L^{\sfrac{p}{3}}_{x,t}} \|\varphi\chi_{\delta,\tau}\|_{L^{\frac{p}{p-3}}_{x,t}}\lesssim \eps^{3\theta-1}\delta^{(d-\gamma)\frac{p-3}{p}}.
\end{equation}
We are now only left to estimate $E_{tr}$.  We split it as
\begin{equation}\label{E_transp_split_lagr}
-E_{tr}=\frac{1}{2}\left \langle D_{t,V^\delta} \tr R_\eps, \varphi\chi_{\delta,\tau}\right\rangle+\frac{1}{2}\int \varphi\chi_{\delta,\tau}\left(v_\eps-V^\delta \right)\cdot \nabla\left( \tr R_\eps\right) =I+II.
\end{equation}
By using \eqref{moll1}, \eqref{CET_commutator} and \eqref{L_stab_beta1}, we estimate
\begin{align}\label{e:transp_2_lagr}
|II|&\leq \|v_\eps-V^\delta\|_{L^p_{x,t}} \|\nabla(\tr R_\eps)\|_{L^{\sfrac{p}{2}}_{x,t}} \|\varphi\chi_{\delta,\tau}\|_{L^{\frac{p}{p-3}}_{x,t}}\nonumber \\
&\lesssim \left( \|v_\eps-v\|_{L^p_{x,t}}+\|v-V^\delta\|_{L^p_{x,t}}\right) \eps^{2\theta-1}\delta^{(d-\gamma)\frac{p-3}{p}}\|v\|^2_{L^p_t(B^\theta_{p,\infty})}\nonumber \\
&\lesssim \left( \eps^\theta+\delta^\theta\right) \eps^{2\theta-1}\delta^{(d-\gamma)\frac{p-3}{p}}.
\end{align}
To estimate $I$ in \eqref{E_transp_split_lagr} we will need to integrate it by parts. Since $\varphi$ is smooth independently on $\delta$, the only terms that need to be estimated are the ones in which the derivatives hit $\chi_{\delta,\tau}$. More precisely we can write 
$$
I\simeq \int \varphi \tr R_\eps  D_{t,V^\delta} \chi_{\delta,\tau},
$$
where we used the symbol $\simeq$ to denote the leading order term as $\delta \rightarrow 0$.  

We now compute the advective derivative of the cutoff $\chi_{\delta, \tau}$ using the basic property of mollification along the flow.  Because $D_{t,V^\delta}$ commutes with its own flow map we have
\begin{align}\label{advect_cutoff}
D_{t,V^\delta} \chi_{\delta,\tau}&=\frac{d}{d\sigma} \chi_{\delta,\tau}\left(\phi^{V^\delta}_\sigma (x,t),t+\sigma \right)\bigg|_{\sigma=0}\nonumber\\
&= \frac{d}{d\sigma}\int \tilde \chi_{\delta,\tau}\left(\phi^{V^\delta}_s\left(\phi^{V^\delta}_\sigma (x,t),t+\sigma \right),t+\sigma+s \right) \eta_\tau(s)\,ds \bigg|_{\sigma=0}\nonumber\\
&= \frac{d}{d\sigma} \int \tilde \chi_{\delta,\tau}\left(\phi^{V^\delta}_{s+\sigma} (x,t), t+\sigma+s \right)\eta_\tau(s)\,ds \bigg|_{\sigma=0}\nonumber\\
&= \frac{d}{d\sigma} \int \tilde \chi_{\delta,\tau}\left(\phi^{V^\delta}_{s} (x,t), t+s \right) \eta_\tau(s - \sigma)\,ds \bigg|_{\sigma=0}\nonumber\\
&=-\int \tilde \chi_{\delta,\tau}\left( \Phi^{V^\delta}_s (x,t) \right) \eta'_\tau(s)\,ds,
\end{align}

Using the fact that $\Phi^{V^\delta}$ is volume preserving we now obtain
$$
\left\| D_{t,V^\delta} \chi_{\delta,\tau}\right\|_{L^q_{x,t}}\lesssim \tau^{-1} \left\|\tilde \chi_{\delta,\tau} \right\|_{L^q_{x,t}}\lesssim \tau^{-1}\delta^{(d-\gamma)\frac{1}{q}}.
$$
Thus, by the usual quadratic commutator estimate \eqref{CET_commutator},  we get
\begin{equation}
\label{L_transp_err_lagr}
|I|\lesssim \| \tr R_\eps\|_{L^{\sfrac{p}{2}}_{x,t}} \left\| D_{t,V^\delta} \chi_{\delta,\tau}\right\|_{L^{\frac{p}{p-2}}_{x,t}}\lesssim  \tau^{-1}\eps^{2\theta}\delta^{(d-\gamma)\frac{p-2}{p}}.
\end{equation}

Thus by putting \eqref{L_Rey_err_lagr}, \eqref{L_press_err_lagr}, \eqref{e:transp_2_lagr} and \eqref{L_transp_err_lagr} all together we achieve 
$$
|\langle D_\eps^v -D^v *\rho_\eps,\varphi\chi_{\delta,\tau}\rangle|\lesssim \left(\eps^{3\theta-1}+\eps^{2\theta-1}\delta^\theta+\tau^{-1}\eps^{2\theta}\delta^{\sfrac{(d-\gamma)}{p}} \right)\delta^{(d-\gamma)\frac{p-3}{p}},
$$
from which by first choosing $ \delta=\eps$ and then $\tau=\delta^{1-\theta+\sfrac{(d-\gamma)}{p}}$, we deduce
$$
|\langle D_\eps^v -D^v *\rho_\eps,\varphi\chi_{\delta,\tau}\rangle|\lesssim \eps^{3\theta-1+(d-\gamma)\frac{p-3}{p}}.
$$
Note that the choice of $\delta$ is consistent with the support condition \eqref{L_support_cond_lagr} above, which played a crucial role for our computations.
Finally, what we have proved is that 
\begin{equation}
    \label{lagrangian_final_bound}
|\langle D_\eps^v ,\varphi\rangle|\lesssim \eps^{3\theta-1+(d-\gamma)\frac{p-3}{p}},
\end{equation}
which concludes the proof by letting $\eps\rightarrow 0$. 
\qed

\begin{remark}[Lagrangian theorem with $v\in L^q_t(B^\theta_{p,\infty})$]
\label{r:lagrangian_L3_in_time}
Similarly to what we have discussed in Remark \ref{r:eulerian_L3_in_time} about the Eulerian theorem, also in the latter Lagrangian proof the bound \eqref{lagrangian_final_bound}, and thus the very same conclusion of Theorem \ref{t:main_lagr_dim}, can be equivalently achieved by assuming $v\in L^q_t(B^\theta_{p,\infty})$, for any two (possibly different) $p,q\in [3,\infty]$, and by slightly strengthening the assumption \eqref{L_dim_gamma} with
\begin{equation}
    \label{Lagrangian_generalised}
    \mathcal{H}^d\left(M\cap \mathcal{L}^{V^\delta}(S)_{\delta,\tau}(t)\right)\lesssim g(t) \delta^{d-\gamma},\quad \text{for some } g\in L^{r}([0,T]), \, \text{being } r=\frac{p-3}{p}\frac{q}{q-3}, 
\end{equation}
 where 
 $$
 \mathcal{L}^{V^\delta}(S)_{\delta,\tau}(t):=\left\{x\in \Omega\,:\, (x,t)\in \mathcal{L}^{V^\delta}(S)_{\delta,\tau}\right\}
 $$
is the $t-$time slice of the set $\mathcal{L}^{V^\delta}(S)_{\delta,\tau}\subset \Omega\times (0,T)$. Without loosing in generality assume $q\leq p$, so that $r\geq 1$. In general \eqref{Lagrangian_generalised} is stronger than \eqref{L_dim_gamma}, while by Fubini's theorem the two coincide if $q=p$, since in this case $g\in L^1([0,T])$, thus providing back the same assumptions used in Theorem \ref{t:main_lagr_dim}. For $q=3$ --- that is, for solutions $v\in L^3_t(B^\theta_{p,\infty})$ --- we must have $g\in L^\infty([0,T])$. In this case the requirement \eqref{Lagrangian_generalised} reads as a (quite strong) uniform in time spatial Lagrangian upper Minkowski dimension at most $\gamma$: This is equivalent to ask that when covering $S$ with Lagrangian space--time cylinders of spatial radius $\delta$, and the corresponding time length $\tau=\delta^{1-\theta+\sfrac{(d-\gamma)}{p}}$, then every $t-$time slice of that covered set sees at most $\delta^{-\gamma}$ of such cylinders.

\end{remark}
\begin{remark}[External force]\label{r:ext_force}
From the two previous proofs it is easy to see that the presence of an external force of the kind $f\in L^{p'}_{x,t}$, being $p'=\sfrac{p}{(p-1)}$ the H\"older conjugate of $p$, does not affect the analysis, and thus one can reach the very same conclusions of Theorem  \ref{t:main} and Theorem \ref{t:main_lagr_dim}. Indeed the forced version of the local energy balance \eqref{Local_energy} reads as 
$$
\partial_t\left( \frac{|v|^2}{2}\right)+\div \left(\left(\frac{|v|^2}{2}+p \right) v \right)=f\cdot v-D^v.
$$
Since $v\in L^p_{x,t}$ and $f\in L^{p'}_{x,t}$,  then $f\cdot v\in L^1_{x,t}$ and thus automatically defines a distribution. Moreover,  in the two previous proofs this extra term will generate  errors of the kind 
$$
E_{f}:=\int \left(f_\eps\cdot v_\eps-(f\cdot v)_\eps\right)\varphi \chi_{\delta},
$$
where $\chi_{\delta}$ is the space--time cutoff localised around the dissipation sets. More precisely, in the proof of Theorem \ref{t:main} is the cutoff given by Lemma \ref{l:eulerian_cutoff}, while in the proof of Theorem \ref{t:main_lagr_dim} is the cutoff obtained by mollifying along the flow of $V^\delta$ for a time interval of length $\tau=\delta^{1-\theta+\sfrac{(d-\gamma)}{p}}$. In both cases, since $|\chi_\delta|\leq 1$, we can estimate
$$
|E_f|\lesssim \| f_\eps\cdot v_\eps-(f\cdot v)_\eps \|_{L^1_{x,t}}\rightarrow 0,
$$
as $\eps\rightarrow 0$, uniformly in $\delta>0$.
\end{remark}


\section{Vanishing viscosity}\label{s:vanish_visc_proof}
The two proofs of Theorems \ref{t:eul_vanish_visc} and \ref{t:lagr_vanish_visc} are a (very similar) standard contradiction argument but we prefer to give the full details of both of them for the reader's convenience. 
\subsection{Proof of Theorem \ref{t:eul_vanish_visc}} By contradiction assume that \eqref{inter_norm_blowup_euler} does not hold. Thus we can find a (non--relabelled) subsequence $\{v^\nu\}_{\nu>0}$ such that 
$$
\left\| v^\nu\right\|_{L^p_t(B^\theta_{p,\infty})}\leq C.
$$
In particular, since $\theta>0$, by the Aubin-Lions-Simon Lemma we can extract a further subsequence  such that $v^\nu\rightarrow v$ in $L^p_{x,t}$ and moreover $v\in L^p_t(B^\theta_{p,\infty})$. Thus $v$ is a weak solution of incompressible Euler which, since $p\geq 3$,  by \eqref{DR_equals_diss} has dissipation measure $D^v=\mu$, where $\mu$ is the limit in measure of the dissipation $\nu|\nabla v^\nu|^2$ as in the assumptions. By \eqref{0th_law_new} we get that $D^v$ is nontrivial, which then contradicts the fact that $D^v\equiv 0$ by Theorem \ref{t:main}, since by assumption 
$$
\frac{2\theta}{1-\theta}>1-\frac{p-3}{p}(d-\gamma).
$$
\qed
\subsection{Proof of Theorem \ref{t:lagr_vanish_visc} } By contradiction assume that \eqref{inter_norm_blowup_lagr} does not hold. Thus we can find a (non--relabelled) subsequence $\{v^\nu\}_{\nu>0}$ such that 
\begin{equation}\label{uniform_bound_interm_norm}
\left\| v^\nu\right\|_{L^p_t(B^\theta_{p,\infty})}\leq C.
\end{equation}
 In particular the $L^2_{x,t}$ limit $v$ of the sequence $\{v^\nu\}_{\nu>0}$ belongs to $L^p_t(B^\theta_{p,\infty})$. Moreover, since $\theta>0$, the Aubin-Lions-Simon Lemma improves the $L^2_{x,t}$ convergence to the $L^p_{x,t}$ one of the whole subsequence which satisfies \eqref{uniform_bound_interm_norm}. Clearly $v$ solves Euler and by \eqref{DR_equals_diss}, since $p\geq 3$, we also have $D^v=\mu$. Thus by \eqref{0th_law_new} we deduce that $D^v$ has to be nontrivial, which then contradicts Theorem \ref{t:main_lagr_dim}, since we assumed
$$
\theta>\frac13-\frac{p-3}{3p}(d-\gamma).
$$
\qed

\begin{bibdiv}
\begin{biblist}

\bib{AGHA84}{article}{
   author={Anselmet, F.},
   author={Gagne, Y.},
   author={Hopfinger, E. J.},
   author={Antonia, R. A.},
   title={High-order velocity structure functions in turbulent shear flows},
   journal={J. Fluid Mech. },
   volume={140},
   publisher={Cambridge University Press, Cambridge},
   date={1984},
   pages={63-89},
}

\bib{BJPV98}{book}{
   author={Bohr, T.},
   author={Jensen, M. H.},
   author={Paladin, G.},
   author={Vulpiani, A.},
   title={Dynamical systems approach to turbulence},
   series={Cambridge Nonlinear Science Series},
   volume={8},
   publisher={Cambridge University Press, Cambridge},
   date={1998},
   pages={xx+350},
   isbn={0-521-47514-7},
}

\bib{BD22}{article}{
   author={Bru\`e, E.},
   author={De Lellis, C.},
   title={Anomalous dissipation for the forced 3D Navier-Stokes equations},
   journal={Comm. Math. Phys.},
   volume={400},
   date={2023},
   number={3},
   pages={1507--1533},
}

\bib{BMNV21}{article}{
   author={Buckmaster, T.},
   author={Masmoudi, N.},
   author={Novack, M.},
   author={Vicol, V.},
   title={Non-conservative $H^{\sfrac{1}{2}-}$ weak solutions of the incompressible 3D Euler equations},
   note={Preprint available at \href{https://arxiv.org/abs/2101.09278}{arXiv:2101.09278}},
   date={2021},
}

\bib{BdLSV17}{article}{
   author={Buckmaster, T.},
   author={De Lellis, C.},
   author={Sz\'{e}kelyhidi, L., Jr.},
   author={Vicol, V.},
   title={Onsager's conjecture for admissible weak solutions},
   journal={Comm. Pure Appl. Math.},
   volume={72},
   date={2019},
   number={2},
   pages={229--274},
}

\bib{CKS97}{article}{
   author={Caflisch, R. E.},
   author={Klapper, I.},
   author={Steele, G.},
   title={Remarks on singularities, dimension and energy dissipation for
   ideal hydrodynamics and MHD},
   journal={Comm. Math. Phys.},
   volume={184},
   date={1997},
   number={2},
   pages={443--455},
   issn={0010-3616},
}

\bib{CG12}{article}{
    author = {Chen, G.-Q.},
    author={Glimm, J.},
     TITLE = {Kolmogorov's theory of turbulence and inviscid limit of the
              {N}avier-{S}tokes equations in $\mathbb{R}^3$},
   JOURNAL = {Comm. Math. Phys.},
    VOLUME = {310},
      YEAR = {2012},
    NUMBER = {1},
     PAGES = {267--283},
}

\bib{CCFS08}{article}{
   author={Cheskidov, A.},
   author={Constantin, P.},
   author={Friedlander, S.},
   author={Shvydkoy, R.},
   title={Energy conservation and Onsager's conjecture for the Euler
   equations},
   journal={Nonlinearity},
   volume={21},
   date={2008},
   number={6},
   pages={1233--1252},
   issn={0951-7715},
}

\bib{CS14}{article}{
    author = {Cheskidov, A.},
    author={Shvydkoy, R.},
     TITLE = {Euler equations and turbulence: analytical approach to
              intermittency},
   JOURNAL = {SIAM J. Math. Anal.},
    VOLUME = {46},
      YEAR = {2014},
    NUMBER = {1},
     PAGES = {353--374},
}

\bib{CS22}{article}{
   author={Cheskidov, A.},
   author={Shvydkoy, R.},
   title={Volumetric theory of intermittency in fully developed turbulence},
   journal={Arch. Ration. Mech. Anal.},
   volume={247},
   date={2023},
   number={3},
   pages={Paper No. 45, 35},
   issn={0003-9527},
}

\bib{CCS22}{article}{
   author={Colombo, M.},
   author={Crippa, G.},
   author={Sorella, M.},
   title={Anomalous dissipation and lack of selection in the Obukhov-Corrsin theory of scalar turbulence},
   note={Preprint available at \href{https://arxiv.org/abs/2207.06833}{arXiv:2207.06833}},
   date={2022},
}

\bib{CD18}{article}{
   author={Colombo, M.},
   author={De Rosa, L.},
   title={Regularity in time of H\"{o}lder solutions of Euler and
   hypodissipative Navier-Stokes equations},
   journal={SIAM J. Math. Anal.},
   volume={52},
   date={2020},
   number={1},
   pages={221--238},
}

\bib{CDF20}{article}{
   author={Colombo, M.},
   author={De Rosa, L.},
   author={Forcella, L.},
   title={Regularity results for rough solutions of the incompressible Euler
   equations via interpolation methods},
   journal={Nonlinearity},
   volume={33},
   date={2020},
   number={9},
   pages={4818--4836},
}

\bib{CET94}{article}{
   author={Constantin, P.},
   author={E, W.},
   author={Titi, E. S.},
   title={Onsager's conjecture on the energy conservation for solutions of
   Euler's equation},
   journal={Comm. Math. Phys.},
   volume={165},
   date={1994},
   number={1},
   pages={207--209},
}

\bib{DebGS17}{article}{
  title={A tribute to conservation of energy for weak solutions},
  author={Debiec, T.},
  author={Gwiazda, P.},
  author={{\'S}wierczewska-Gwiazda, Agnieszka},
 note={Preprint available at \href{https://arxiv.org/abs/1707.09794}{arXiv:1707.09794}},
  year={2017}
}

\bib{D20}{article}{
   author={De Rosa, L.},
   title={On the helicity conservation for the incompressible Euler
   equations},
   journal={Proc. Amer. Math. Soc.},
   volume={148},
   date={2020},
   number={7},
   pages={2969--2979},
}

\bib{DDI23}{article}{
   author={De Rosa, L.},
   author={Drivas, T. D.},
   author={Inversi, M.},
   title={On the Support of Anomalous Dissipation Measures},
   note={Preprint available at \href{https://arxiv.org/abs/2301.09603}{	arXiv:2301.09603}},
   date={2023},
}

\bib{DH21}{article}{
   author={De Rosa, L.},
   author={Haffter, S.},
   title={Dimension of the singular set of wild H\"older solutions of the incompressible Euler equations},
   journal={Nonlinearity},
   volume={35},
   date={2022},
   number={10},
   pages={5150-5192},
}

\bib{DH22}{article}{
   author={De Rosa, L.},
   author={Haffter, S.},
   title={A fractal version of the Onsager's conjecture: the $\beta-$model},
   journal={Proc. Amer. Math. Soc.},
   volume={151},
   date={2023},
   number={1},
   pages={255--267},
}

\bib{DI23}{article}{
   author={De Rosa, L.},
   author={Inversi, M.},
   title={Dissipation in Onsager's critical classes and energy conservation in $BV\cap L^\infty$ with and without boundary},
   note={Preprint available at \href{https://arxiv.org/abs/2307.09189}{		arXiv:2307.09189}},
   date={2023},
}

\bib{DT21}{article}{
   author={De Rosa, L.},
   author={Tione, R.},
   title={Sharp energy regularity and typicality results for H\"{o}lder
   solutions of incompressible Euler equations},
   journal={Anal. PDE},
   volume={15},
   date={2022},
   number={2},
   pages={405--428},
}

\bib{Dr19}{article}{
   author={Drivas, T. D.},
   title={Turbulent cascade direction and Lagrangian time-asymmetry},
   journal={J. Nonlinear Sci.},
   volume={29},
   date={2019},
   number={1},
   pages={65--88},
}

\bib{Dr22}{article}{
   author={Drivas, T. D.},
   title={Self-regularization in turbulence from the Kolmogorov 4/5-law and
   alignment},
   journal={Philos. Trans. Roy. Soc. A},
   volume={380},
   date={2022},
   number={2226},
   pages={Paper No. 20210033, 15},
}

\bib{DrEy19}{article}{
   author={Drivas, T. D.},
   author={Eyink, G. L.},
   title={An Onsager singularity theorem for Leray solutions of
   incompressible Navier-Stokes},
   journal={Nonlinearity},
   volume={32},
   date={2019},
   number={11},
   pages={4465--4482},
}

\bib{DR00}{article}{
   author={Duchon, J.},
   author={Robert, R.},
   title={Inertial energy dissipation for weak solutions of incompressible
   Euler and Navier-Stokes equations},
   journal={Nonlinearity},
   volume={13},
   date={2000},
   number={1},
   pages={249--255},
}

\bib{E07}{article}{
 author = {Eyink, G. L.},
 title = {Turbulence Theory course notes},
 journal = {The Johns Hopkins University},
 year = {2007},
 note = {Available at \href{https://www.ams.jhu.edu/\%7Eeyink/Turbulence/notes.html}{Eyink notes}},
}

\bib{E94}{article}{
   author={Eyink, G. L.},
   title={Energy dissipation without viscosity in ideal hydrodynamics. I.
   Fourier analysis and local energy transfer},
   journal={Phys. D},
   volume={78},
   date={1994},
   number={3-4},
   pages={222--240},
}

\bib{ES06}{article}{
   author={Eyink, G. L.},
   author={Sreenivasan, K. R.},
   title={Onsager and the theory of hydrodynamic turbulence},
   journal={Rev. Modern Phys.},
   volume={78},
   date={2006},
   number={1},
   pages={87--135},
}

\bib{ET99}{article}{
   author={Eyink, G. L.},
   author={Thomson, D. J.},
   title={Free decay of turbulence and breakdown of self-similarity},
   journal={Phys. Fluids},
   volume={12},
   date={2000},
   number={3},
   pages={477--479},
}

\bib{F95}{article}{
   author={Frisch, U.},
   title={Turbulence: The Legacy of A. N. Kolmogorov},
   journal={Cambridge: Cambridge University Press},
   date={1995},
}

\bib{frisch1991global}{article}{
  title={From global scaling, a la {K}olmogorov, to local multifractal scaling in fully developed turbulence},
  author={Frisch, U.},
  journal={Proceedings of the Royal Society of London. Series A: Mathematical and Physical Sciences},
  volume={434},
  number={1890},
  pages={89--99},
  year={1991},
  publisher={The Royal Society London}
}

\bib{FP85}{article}{
   author={Frisch, U.},
   author={Parisi, G.},
   title={On the singularity structure of fully developed turbulence},
   journal={Turbulence and Predictability of Geophysical Flows and Climate Dynamics, (North-Holland, Amsterdam)},
   date={1985},
  pages={84-87},
}

\bib{FSN78}{article}{
   author={Frisch, U.},
   author={Sulem, P.},
      author={Nelkin, M.},
   title={A simple dynamical model of intermittent fully developed turbulence.},
   journal={Journal of Fluid Mechanics},
   volume={87},
   date={1978},
   number={4},
   pages={719-736},
}

\bib{ISY20}{article}{
  title = {Scaling exponents saturate in three-dimensional isotropic turbulence},
  author = {Iyer, K. P.},
  author={Sreenivasan, K. R.},
  author={Yeung, P. K.},
  journal = {Phys. Rev. Fluids},
  volume = {5},
  issue = {5},
  pages = {054605},
  numpages = {16},
  year = {2020},
  publisher = {American Physical Society},
}

\bib{Is2013}{article}{
   author={Isett, P.},
   title={Regularity in time along the coarse scale flow for the
   incompressible Euler equations},
   journal={Trans. Amer. Math. Soc.},
   volume={376},
   date={2023},
   number={10},
   pages={6927--6987},
}

\bib{Is17holder}{book}{
  title={H{\"o}lder continuous Euler flows in three dimensions with compact support in time},
  author={Isett, P.},
  volume={357},
  year={2017},
  publisher={Princeton University Press}
}

\bib{Isett17}{article}{
   author={Isett, P.},
   title={On the Endpoint Regularity in {O}nsager's Conjecture},
   note={Preprint available at \href{https://arxiv.org/pdf/1706.01549.pdf}{arXiv:1706.01549}},
   date={2017},
}

\bib{isettOnsag}{article}{
    AUTHOR = {Isett, P.},
     TITLE = {A proof of {O}nsager's conjecture},
   JOURNAL = {Ann. of Math. (2)},
    VOLUME = {188},
      YEAR = {2018},
    NUMBER = {3},
     PAGES = {871--963},
      ISSN = {0003-486X},
       URL = {https://doi-org.ezproxy.lib.utexas.edu/10.4007/annals.2018.188.3.4},
}

\bib{IsOh17}{article}{
   author={Isett, P.},
   author={Oh, S.-J.},
   title={On the kinetic energy profile of H\"{o}lder continuous Euler flows},
   journal={Ann. Inst. H. Poincar\'{e} C Anal. Non Lin\'{e}aire},
   volume={34},
   date={2017},
   number={3},
   pages={711--730},
}

\bib{K41}{article}{
   author={Kolmogorov, A. N.},
   title={The local structure of turbulence in incompressible viscous fluid
   for very large Reynold's numbers},
   journal={C. R. (Doklady) Acad. Sci. URSS (N.S.)},
   volume={30},
   date={1941},
   pages={301--305},
}

\bib{K62}{article}{
   author={Kolmogorov, A. N.},
   title={A refinement of previous hypotheses concerning the local structure
   of turbulence in a viscous incompressible fluid at high Reynolds number},
   journal={J. Fluid Mech.},
   volume={13},
   date={1962},
   pages={82--85},
}

\bib{LS15}{article}{
    author = {Luo, X.},
    author={Shvydkoy, R.},
     TITLE = {2{D} homogeneous solutions to the {E}uler equation},
   JOURNAL = {Comm. Partial Differential Equations},
    VOLUME = {40},
      YEAR = {2015},
    NUMBER = {9},
     PAGES = {1666--1687},
}

\bib{LS17}{article}{
      author = {Luo, X.},
    author={Shvydkoy, R.},
     TITLE = {Addendum: 2{D} homogeneous solutions to the {E}uler equation},
   JOURNAL = {Comm. Partial Differential Equations},
    VOLUME = {42},
      YEAR = {2017},
    NUMBER = {3},
     PAGES = {491--493},
}

\bib{Man75}{book}{
   author={Mandelbrot, B. B.},
   title={Fractals: form, chance, and dimension},
   edition={Revised edition},
   note={Translated from the French},
   publisher={W. H. Freeman and Co., San Francisco, Calif.},
   date={1977},
   pages={xvi+365},
}

\bib{Mat}{book}{
    AUTHOR = {Mattila, P.},
     TITLE = {Geometry of sets and measures in {E}uclidean spaces},
    SERIES = {Cambridge Studies in Advanced Mathematics},
    VOLUME = {44},
 PUBLISHER = {Cambridge University Press, Cambridge},
      YEAR = {1995},
     PAGES = {xii+343},
}

\bib{MS87}{article}{
  title={The multifractal spectrum of the dissipation field in turbulent flows},
  author={Meneveau, C.}
  author={Sreenivasan, K. R.},
  journal={Nuclear Physics B-Proceedings Supplements},
  volume={2},
  pages={49--76},
  year={1987},
  publisher={Elsevier}
}

\bib{Moi09}{article}{
   author={Moin, P.},
   title={Revisiting Taylor's hypothesis},
   journal={J. Fluid Mech.},
   volume={640},
   date={2009},
   pages={1--4},
   issn={0022-1120},
}

\bib{NV22}{article}{
   author={Novack, M.},
   author={Vicol, V.},
   title={An intermittent Onsager theorem},
   journal={Invent. Math.},
   volume={233},
   date={2023},
   number={1},
   pages={223--323},
}

\bib{O49}{article}{
   author={Onsager, L.},
   title={Statistical hydrodynamics},
   journal={Nuovo Cimento (9)},
   volume={6},
   date={1949},
   number={Supplemento, 2 (Convegno Internazionale di Meccanica
   Statistica)},
   pages={279--287},
}

 \bib{Sv09}{article}{
   author={Shvydkoy, R.},
   title={On the energy of inviscid singular flows},
   journal={J. Math. Anal. Appl.},
   volume={349},
   date={2009},
   number={2},
   pages={583--595},
   issn={0022-247X},
}

\bib{Shv18}{article}{
    AUTHOR = {Shvydkoy, R.},
     TITLE = {Homogeneous solutions to the 3{D} {E}uler system},
   JOURNAL = {Trans. Amer. Math. Soc.},
    VOLUME = {370},
      YEAR = {2018},
    NUMBER = {4},
     PAGES = {2517--2535},
}

	 \bib{S81}{article}{
   author={Siggia, E.},
   title={Numerical study of small-scale intermittency in three-dimensional turbulence.},
   journal={Journal of Fluid Mechanics},
   volume={107},
   date={1982},
   pages={375-406},
}
 
\bib{T38}{article}{
   author={Taylor, G.I.},
   title={The spectrum of Turbulence},
   journal={Proc. of the Royal Society of London. Series A, Math. and Phys. Sciences, },
   volume={164},
   date={1938},
   pages={476--490},
}

\bib{VY21}{article}{
  title={Second derivatives estimate of suitable solutions to the 3d navier--stokes equations},
  author={Vasseur, A.},
  author={ Yang, J.},
  journal={Arch. Ration. Mech. Anal.},
  volume={241},
  number={2},
  pages={683--727},
  year={2021},
  publisher={Springer}
}

\bib{V15}{article}{
  title={Dissipation in turbulent flows},
  author={Vassilicos, J. C.},
  journal={Annual Review of Fluid Mechanics},
  volume={47},
  pages={95--114},
  year={2015},
  publisher={Annual Reviews}
}

\bib{Y20}{article}{
   author={Yang, J.},
   title={Construction of maximal functions associated with skewed cylinders
   generated by incompressible flows and applications},
   journal={Ann. Inst. H. Poincar\'{e} C Anal. Non Lin\'{e}aire},
   volume={39},
   date={2022},
   number={4},
   pages={793--818},
}

\bib{ZH81}{article}{
   author={Zaman, K.},
   author={Hussain, A. },
   title={Taylor hypothesis and large-scale coherent structures},
   journal={J. Fluid Mech.},
   volume={112},
   date={1981},
   pages={379--396},
}

\end{biblist}
\end{bibdiv}

\end{document}